\documentclass[12pt]{amsart}
\usepackage{amsmath}
\usepackage{amssymb}
\usepackage{amsfonts}
\usepackage{amsthm}
\usepackage{mathrsfs}
\usepackage[all]{xy}
\usepackage{color}
\usepackage{multicol,tikz-cd}
\usepackage{enumerate}
\usepackage{bbm}

\usepackage[vcentermath,enableskew]{youngtab} 
\usepackage{multirow}
\usepackage{tikz}
\usetikzlibrary{arrows}

\usepackage[colorlinks=true, pdfstartview=FitV, linkcolor=blue, citecolor=blue, urlcolor=blue, backref=true]{hyperref}
\newcommand{\longhookrightarrow}{\hookrightarrow}

\theoremstyle{plain}
\newtheorem*{TheoremA}{Theorem~A}
\newtheorem*{TheoremB}{Theorem~B}
\newtheorem*{TheoremC}{Theorem~C}
\newtheorem*{TheoremD}{Theorem~D}
\newtheorem{theorem}{Theorem}[section]
\newtheorem{lemma}[theorem]{Lemma}
\newtheorem{cor}[theorem]{Corollary}
\newtheorem{corollary}[theorem]{Corollary}
\newtheorem{proposition}[theorem]{Proposition}
\newtheorem{rem}[theorem]{Remark}

\theoremstyle{definition}

\newtheorem{remark}[theorem]{Remark}

\newtheorem{example}[theorem]{Example}

\newcommand{\C}{\mathbb{C}}

\renewcommand{\k}{\mathbbm{k}}
\newcommand{\lb}{{\lambda}}
\newcommand{\Vb}{{\mathbf{V}}} 
\renewcommand{\O}{\mathbb{O}}

\newcommand{\V}{\mathbb{V}}

\newcommand{\id}{{\rm id}}
\newcommand{\mcirc}{\mathfrak{m}^\circ}

\newcommand{\G}{\mathbf{G}}
\newcommand{\T}{\mathbf{T}}

\newcommand{\cN}{\mathcal{N}}
\newcommand{\Oc}{\mathcal{O}}
\newcommand{\bO}{\mathbf{O}}

\newcommand{\N}{\mathcal{N}}

\newcommand{\J}{\mathfrak{J}}

\DeclareMathOperator{\chr}{char}
\DeclareMathOperator{\End}{End}

\DeclareMathOperator{\GL}{GL}

\DeclareMathOperator{\gr}{\operatorname{gr}}
\DeclareMathOperator{\Spec}{\operatorname{Spec}}

\DeclareMathOperator{\Id}{Id}

\DeclareMathOperator{\Sp}{Sp}
\DeclareMathOperator{\Lie}{Lie}
\DeclareMathOperator{\Ind}{Ind}

\renewcommand{\l}{\mathfrak{l}}
\newcommand{\mm}{\mathfrak{m}}
\newcommand{\g}{\mathfrak{g}}
\newcommand{\gl}{\mathfrak{gl}}
\renewcommand{\sp}{\mathfrak{sp}}

\renewcommand{\t}{\mathfrak{t}}

\newcommand{\mf}{\mathfrak}
\newcommand{\gb}{\mathfrak{gl}_n}

\newcommand{\codim}{\operatorname{codim}}

\newcommand{\ov}{\overline}
\newcommand{\reg}{\operatorname{reg}}
\newcommand{\Gbreg}{{\G\operatorname{-reg}}}
\newcommand{\Greg}{{G\operatorname{-reg}}}

\def\into{\hookrightarrow}
\def\onto{\twoheadrightarrow}

\numberwithin{equation}{section}

\title[Jordan stratification of enhanced \& exotic modules]{Sheets, Jordan classes and induced orbits in the exotic and enhanced modules}

\author{Filippo Ambrosio}
\address{F.~Ambrosio: Institut f\"ur Mathematik und Informatik, FSU-Jena, Inselplatz 5, 07743 Jena, Germany}\email{filippo.ambrosio@uni-jena.de}

\author{Giovanna Carnovale}
\address{G.~Carnovale: Dipartimento di Matematica ``Tullio Levi-Civita'', University of Padova, via Trieste 63, 35121, Padova, Italy}\email{carnoval@math.unipd.it}

\author{Francesco Esposito}
\address{F.~Esposito: Dipartimento di Matematica ``Tullio Levi-Civita'', University of Padova, via Trieste 63, 35121, Padova, Italy}\email{esposito@math.unipd.it}

\author{Neil Saunders}
\address{N.~Saunders: Department of Mathematics, City St George's University of London,  Northampton Square
London EC1V 0HB,  UK}
\email{neil.saunders.3@citystgeorges.ac.uk}

\author{Lewis Topley}
\address{L.~Topley: Department of Mathematical Sciences, University of Bath, Claverton Down, Bath, BA2 7AY, UK}\email{lt803@bath.ac.uk}

\usepackage{fancyhdr}
\makeatletter
\let\runauthor\@author
\let\runtitle\@title
\makeatother
\begin{document}

\begin{abstract}
Kato developed an exotic Deligne--Langlands correspondence using a geometric model for the multiparameter affine Hecke algebra of type {\sf C}, based on his {\it exotic nilpotent cone}. Achar--Henderson and Springer showed that this exotic nilpotent is intimately related to another, apparently  simpler variety called the {\it enhanced nilpotent cone}. Each of these is defined as the Hilbert nullcone of a polar module, the exotic $\operatorname{Sp}_{2n}$-module and the enhanced $\operatorname{GL}_n$-module, respectively.

In this paper we conduct a detailed study of the geometry of these two modules, by introducing the Jordan stratification, simultaneously generalising classical results on the adjoint representation as well as the symmetric space associated to $(\mathfrak{gl}_{2n}, \mathfrak{sp}_{2n})$. One of the key tools we develop is the theory of induced orbits in the enhanced and exotic nilpotent cones, following the work of Lusztig--Spaltenstein. Our main application is a classification of sheets in these modules, inspired by a theorem of Borho.
\end{abstract}

\maketitle

\section{Introduction}

\subsection{The exotic and enhanced modules}
\label{ss:exoticenhancedintro}

Throughout the introduction we fix an algebraically closed field $\k$.

Fix $n > 0$ and $G := \Sp_{2n}(\k)$. In \cite{K1} Kato defined the {\it $\ell$-exotic module} (for $\k = \C$) to be the $G$-module $(\k^{2n})^{\oplus \ell} \oplus (\bigwedge^2 \k^{2n})$, where $\k^{2n}$ is the vector representation of $G$. He then defined the {\it $\ell$-exotic nilpotent cone} to be the Hilbert nullcone of the $\ell$-exotic module. Kato's purpose was to define an analogue of the Steinberg variety for the 2-exotic nilpotent cone, and use this to obtain a geometric realisation of the 3-parameter Hecke algebra of type ${\sf C}_n^{(1)}$, generalising the work of Lusztig  and Chriss--Ginzburg (see \cite{K1} and \cite[Ch. 7]{CG} for references). Kato shows in \cite[Theorem~1.2]{K1} that the $1$-exotic nilpotent cone has many favourable geometric properties, reminiscent of the nilpotent cone of the adjoint representation. Soon after, in \cite{K2} Kato showed that the adjoint nullcone $\cN(\Lie(G))$ can effectively be viewed as a flat deformation of the exotic 1-nilpotent cone in characteristic $p = 2$. In this article we denote the $1$-exotic module by $\V$, and refer to it as the {\it exotic module}. Its Hilbert nullcone is denoted $\cN(\V)$.

Now let $\G := \GL_n(\k)$. In \cite{ah} Achar--Henderson defined the {\it enhanced module} to be the $\G$-module $\Vb :=  \k^n\oplus\gl_n$, and they carried out a thorough study of its Hilbert nullcone, which they dubbed the {\it enhanced nilpotent cone}. They studied the geometry of orbit closures in the enhanced nilpotent cone, using semisimall resolutions of singularities. One of their main results describes the intersection cohomology of orbit closures via Shoji's bipartition Kostka polynomials.  They, and independently Springer, observed in \cite[\textsection 6]{ah} and  \cite[\textsection 4]{springer} that for $\chr(\k) \ne 2$ the enhanced nilpotent cone $\cN(\Vb)$ includes into the exotic nilpotent cone inducing a bijection of orbit spaces
\begin{eqnarray}
\label{eq:orbitbijection}
\cN(\Vb)/\G \overset{1\text{-}1}{\longrightarrow} \cN(\V)/G
\end{eqnarray}
Achar--Henderson demonstrated that $\G$-orbits in $\cN(\Vb)$ are parameterised by bipartitions, and they used the bijection \eqref{eq:orbitbijection} to describe the dominance ordering of $G$-orbits in $\cN(\V)$. A parametrisation of the orbits in $\cN(\V)$ in terms of marked partitions was obtained in \cite[Theorem 8.3]{K1}. One of the core themes of the present article is a comparison of the geometry of $\Vb$ and $\V$ generalising \eqref{eq:orbitbijection}.

In \cite{springer} Springer considered the invariant theory of $(\Vb, \G)$ and $(\V, G)$, describing the basic homogeneous generators of $\k[\Vb]^\G$ and $\k[\V]^G$, under the same assumption on $\chr(\k)$ as \cite{ah}. He independently proved \eqref{eq:orbitbijection}, and extended the result by showing that the bijection doubles dimensions of orbits.  
The exotic and enhanced nilpotent cones lead to geometry, combinatorics and representation theory very similar to that of the adjoint representation of $\Sp_{2n}$; in many senses, however, the theory is more elegant in the enhanced and exotic setting, bearing similarities to the geometry of the adjoint representation of $\GL_n$.

\subsection{Sheets, Jordan classes and induced orbits in the adjoint representation}
\label{ss:sheetsandclasses}
For the moment, suppose $G = \GL_n$ or $G$ is a connected, simply connected, simple algebraic group not of type {\sf A}, and that $\chr(\k)$ is good for $G$. The Lie algebra will be denoted $\g = \Lie(G)$. 
The theory we describe in the present section was pioneered by Borho \cite{bo-inv} over $\C$, and was later investigated over fields of positive characteristic by Spaltenstein \cite{Spalt82, Spalt84} and developed 
(under our hypotheses) by Premet--Stewart \cite{PS}.

The sheets of the adjoint representation are the irreducible components of the rank varieties
\begin{eqnarray}
\label{eq:rank-vars}
\g_{(k)} := \{x\in \g \mid \dim G x = k\} \ \text{ for } k \in \mathbb{N}.
\end{eqnarray}
They are locally closed subsets of $\g$ and there are finitely many of them. The classification of these sheets was the main result of \cite{bo-inv} and it goes via Jordan classes, also known as decomposition classes.

Let $x = x_s + x_n$ denote the Jordan decomposition of $x\in \g$.  We say that two elements $x, y\in \g$ are Jordan equivalent if there exists an element $g\in G$ such that $g  x_n = y_n$ and $g  \g^{y_s} = \g^{x_s}$, where $\g^{z}$ denotes the centraliser of $z\in\g$. The Jordan classes are equivalence classes of elements of $\g$ under this relation.

It is easy to see that Jordan classes are classified by $G$-orbits of pairs $(\l, \Oc)$, where $\l$ is a Levi subalgebra and $\Oc \subseteq \N(\l)$ is a nilpotent orbit. If $x = x_s + x_n$ then $(\l, \Oc) = (\g^{x_s}, G^{x_s}  x_n)$ is such a pair, and the Jordan class of $x$ is
\begin{eqnarray}
\label{eq:classicalJordanclasses}
G  (\mf{z}(\l)^{\operatorname{reg}} + \Oc) = G  (\mf{z}(\l)^{\operatorname{reg}} + x_n),
\end{eqnarray}
where $\mf{z}(\l)^{\operatorname{reg}}$ denotes the elements $z$ in the centre $\mf{z}(\l) \subseteq \l$ satisfying $\g^z = \l$ or, equivalently, those elements which attain the maximal $G$-orbit dimension over $\mf z(\l)$.

Since there are finitely many nilpotent orbits under our hypotheses \cite[\textsection 2.5\&\textsection2.6]{JaNO} and since the $G$-conjugacy classes of Levi subalgebras are classified by subdiagrams of the Dynkin diagram of $G$, it follows that there are finitely many Jordan classes.

The $G$-orbits in a Jordan class have constant dimension, so each sheet is a union of Jordan classes. Thus each sheet contains a dense class. To classify the sheets one must explain when a pair $(\l, \Oc)$ corresponds to a class which is dense in a sheet.  In turn, to describe this distinguished collection of classes one must use the theory of induced orbits, first introduced by Lusztig--Spaltenstein \cite{LS}.

Let $(\l, \Oc)$ be a pair as above and let $z\in \mf{z}(\l)$. If we pick a parabolic subgroup $P \subseteq G$ with infinitesimal Levi decomposition $\Lie(P) = \l \ltimes \mf{u}_P$, then we can form the associated fibre bundle over the partial flag variety, with collapsing map to $\g$ induced by the $G$-action
\begin{equation}
\label{eq:classical-induction}
\begin{tikzcd}
G \times^P (z + \Oc + \mf{u}_P) \arrow[d, swap," "] \arrow[rr, " "]&& \g \\
G/P && 
\end{tikzcd} 
\end{equation}
The horizontal arrow is proper and finite (hence closed), and the image contains a unique dense orbit called the induced orbit, denoted $\Ind_{\l, z}^\g(\Oc)$. As the notation suggests it does not depend on the choice of $P$. If a nilpotent orbit can not be induced from an orbit in a proper Levi subalgebra then it is called rigid. It turns out that the Jordan class associated to $(\l, \Oc)$ is dense in a sheet if and only if $\Oc$ is rigid in $\l$, see \cite[Satz~4.3]{bo-inv} and \cite[Theorem~2.8]{PS}. Thus the sheets of $\g$ are classified by pairs $(\l, \Oc)$ where $\Oc \subseteq \l$ is a rigid nilpotent orbit.

\subsection{Main results}
\label{ss:mainresults}

Resume the notation $G, \G, \V, \Vb$ from Section~\ref{ss:exoticenhancedintro}. The purpose of this paper is to generalise the theory described in Section~\ref{ss:sheetsandclasses} to the enhanced and exotic modules. For the rest of the paper we let $\k$ be an algebraically closed field of characteristic $\ge 0$. Whenever we work with $(\V, G)$ we assume that $\chr(\k) \ne 2$.

The orbit spaces $\Vb/\G$ and $\V/G$ are each equipped with a poset structure (the ``dominance order'') determined by inclusion of orbit closures. Our first theorem concerns quotients of $\Vb$ and $\V$, see Theorem~\ref{prop:double-dim} and Theorem~\ref{T:quotients}. The first part generalises \eqref{eq:orbitbijection}, whilst the second part generalises Kostant's theorem on the adjoint quotient map (see Propositions~7.13 and 8.5 of \cite{JaNO}).
\begin{TheoremA}
\begin{enumerate}
\setlength{\itemsep}{4pt}
\item The inclusion $\Vb \subseteq \V$ induces a poset isomorphism which doubles the dimensions of orbits
$$\Vb/ \G \to \V/G.$$
\item The quotient maps 
\begin{eqnarray*}
\Vb & \longrightarrow & \Vb/\!/\G, \\ \V & \longrightarrow  &\V/\!/ G,
\end{eqnarray*}
 are faithfully flat. The fibres are irreducible, normal complete intersections, each consisting of finitely many orbits.
\end{enumerate}
\end{TheoremA}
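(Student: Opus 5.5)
Part (1) reduces to the nilpotent case via Jordan decomposition, and part (2) goes through the Kostant--Richardson complete-intersection criterion once we know both quotients are affine spaces with nullcones of the expected dimension.

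For part (1), the inclusion $\Vb \subseteq \V$ is given explicitly. Write $\k^{2n} = \k^n \oplus (\k^n)^*$ as a sum of Lagrangians with $\G$ the Levi stabiliser, so that $\bigwedge^2 \k^{2n} = \bigwedge^2\k^n \oplus \gl_n \oplus \bigwedge^2(\k^n)^*$ and $\Vb = \k^n \oplus \gl_n$ embeds $\G$-equivariantly. I would first reduce to the nilpotent case by a Jordan-type decomposition. The component $x \in \bigwedge^2\k^{2n}$ may be viewed, via the symplectic form, as an element of the symmetric space $\mathfrak{p}$ for $(\gl_{2n},\sp_{2n})$. Its semisimple part $x_s$ then splits $\k^{2n}$ into eigenspaces, each of which is a symplectic space carrying an exotic nilpotent datum up to a scalar shift. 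The analogous statement for $\Vb$ splits $\k^n$ into generalised eigenspaces of the matrix $x$. On each eigenspace the vector component is projected, and the orbit reduces to a product of nilpotent orbits of smaller enhanced and exotic modules. The eigenvalues of $x_s$ on the exotic side occur with multiplicity two, which matches the invariant-theory picture from Springer: $\k[\V]^G$ and $\k[\Vb]^\G$ are both polynomial in $n$ generators, namely the coefficients of a Pfaffian-type characteristic polynomial on the exotic side and of the characteristic polynomial on the enhanced side, and the restriction map between them is an isomorphism. This gives surjectivity and injectivity of $\Vb/\G \to \V/G$ on semisimple parts, and the nilpotent parts are handled by \eqref{eq:orbitbijection}. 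Dimension doubling follows blockwise from Springer's nilpotent result together with the formula $\dim G x = \dim G - \dim G^{x_s} + \dim G^{x_s}x_n$, computed on both sides; here $G^{x_s}$ is a product of $\Sp_{2m_i}$ and $\G^{x_s}$ is a product of $\GL_{m_i}$, and $\dim \Sp_{2m}-\dim\GL_m$ terms combine correctly since $\dim\Sp_{2n}-\dim \prod\Sp_{2m_i} = 2(\dim \GL_n - \dim\prod \GL_{m_i})$.

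Poset compatibility is the step I expect to be the main obstacle. The map is continuous, so closure inclusions in $\Vb$ are preserved. For the converse I would use the fact that closure order is determined by the generic point of each closure together with the finiteness of orbits in each fibre of the quotient map. Concretely, I would reduce to a single fibre of the quotient, which by associated-bundle induction over the semisimple part is isomorphic to $G \times^{G^{x_s}}(\text{product of nullcones})$. That reduces the question to the nilpotent case, where Achar--Henderson proved the orders agree, and to the corresponding fibre bundle over $\G/\G^{x_s}$. Degenerations across fibres, that is between different semisimple parts, would be handled by showing that a closure inclusion forces the semisimple parts to agree after projecting to the quotient.

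For part (2), both quotients are affine spaces $\mathbb{A}^n$. The nullcones have dimension $\dim \V - n$ and $\dim\Vb - n$ respectively: use the orbit bijection and dimension doubling, the known regular nilpotent orbit dimensions, and the fact that the nullcone is the closure of its maximal orbit (Achar--Henderson for $\Vb$, Kato for $\V$). Then every fibre has dimension at most the nullcone dimension, by upper semicontinuity together with the $\k^\times$-scaling action contracting fibres to the nullcone. Hence the fibres are cut out by $n$ equations of the correct codimension, so each is a complete intersection and the quotient map is flat by miracle flatness, since source and target are smooth. Surjectivity gives faithful flatness. Irreducibility and normality of each fibre follow by the bundle description above, which reduces them to products of nullcones of smaller enhanced and exotic modules. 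Those nullcones are irreducible, being closures of the regular orbit, and normal, because they are complete intersections that are regular in codimension one, the complement of the regular orbit having codimension at least $2$ (Serre's criterion). Finiteness of orbits in each fibre follows from the same reduction combined with the finiteness of nilpotent orbits.
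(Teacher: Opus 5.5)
Your part (1) is essentially the paper's argument. You use the JKV reduction to $(0,x_s)+(v,x_n)$, match the semisimple parts, apply Achar--Henderson/Springer blockwise inside $\G^{x_s}\cong\prod\GL_{m_i}\subseteq\prod\Sp_{2m_i}\cong G^{x_s}$, and make the same dimension count. Two details differ. You match semisimple orbits via $\V/\!/G\cong\Vb/\!/\G$ rather than via diagonal forms and $\GL_{2n}$-conjugacy. Your order reflection through the fibre $G\times^{G^{x_s}}(x_s+\mathcal N(\V_{G^{x_s}}))$ is the slice fact the paper uses implicitly when it intersects $\overline{G\varphi(v',x')}$ with $\varphi(0,x_s)+\varphi(\mathcal N(\Vb_\lambda))$.

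Part (2) takes a genuinely different route. The paper obtains flatness by reducing to $\bigwedge^2\k^{2n}\to\bigwedge^2\k^{2n}/\!/G$ and citing Kostant--Rallis and Levy. It then applies Serre's criterion to each fibre directly (finitely many orbits, all of even dimension), and proves irreducibility of an arbitrary fibre by degenerating to the nullcone, $\gr\k[\pi_{\V,G}^{-1}(x)]\cong\k[\mathcal N(\V)]$. You get flatness intrinsically from $\dim\mathcal N(\V)=2n^2=\dim\V-n$, conical upper semicontinuity and miracle flatness. That is self-contained and characteristic-free, which is a real gain. You then transport irreducibility, normality and orbit-finiteness from products of smaller nullcones through the bundle description.

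Two points need repair.

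First, your normality argument for the smaller nullcones fails on the enhanced side. In $\mathcal N(\Vb_m)$, which has dimension $m^2$, the orbit $\bO_{((m-1);(1))}$ has dimension $m^2-1$. So the complement of the open orbit has codimension $1$; only on the exotic side does dimension doubling force codimension $\ge 2$. Normality still holds because $\mathcal N(\Vb_m)=\k^m\times\mathcal N(\gl_m)$. Alternatively, use $\pi_{\Vb,\G}^{-1}(s)\cong\k^n\times\pi_{\gl_n,\G}^{-1}(s)$ as the paper does.

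Second, the bundle description carries almost all of part (2) and the order reflection in part (1), yet it is only asserted. The obvious map $G\times^{G^{x_s}}(x_s+\mathcal N(\V_{G^{x_s}}))\to\pi_{\V,G}^{-1}\pi_{\V,G}(x_s)$ is a bijective morphism. That alone neither reflects closure relations nor transfers normality, so you need an isomorphism.
\begin{itemize}
\item In characteristic $0$, Luna's slice theorem gives it. The slice representation at $(0,x_s)$ is $\V_{G^{x_s}}$, since $[\sp_{2n},x_s]+\sum_i\bigwedge^2\k^{2m_i}=\bigwedge^2\k^{2n}$.
\item In general, argue directly. The characteristic polynomial of $x$ is constant on the fibre, so the projections onto the generalised eigenspaces of $x$ are fixed polynomials in $x$. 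Because $x$ is self-adjoint, these eigenspaces are pairwise orthogonal symplectic subspaces. This yields a $G$-equivariant morphism from the fibre to $G/G^{x_s}$, and hence the associated-bundle isomorphism.
\end{itemize}
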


In \cite{GV} Gatti--Viniberghi introduced a suitable generalisation of the Jordan decomposition, which makes sense  in any module over an algebraic group. Its existence in characteristic $0$ is proved in \cite[Appendix]{Kac}, and the proof can be adapted to positive characteristic using \cite{BR}.  We call this a Jordan--Kac--Vinberg decomposition. The foundational works \cite{K1, springer} identify a suitable notion of semisimple and nilpotent elements in $\Vb$ and $\V$, which we recall in Section~\ref{ss:nilpandss}. 
 In Section~\ref{ss:JKVdecomposition} we explain that this leads to a Jordan--Kac--Vinberg decomposition for the enhanced and exotic modules. Although this is not the unique such decomposition (Remark~\ref{r:JKVnonunique}) it is canonical being the only ones compatible with the natural  $\G$-module inclusion $\gl_n\into \Vb$ and the natural $G$-module inclusion $\V\into \k^{2n}\oplus \gl_{2n}$. 

In Section~\ref{ss:EnhancedExoticClasses} we introduce the enhanced and exotic Jordan classes. 
If $(v,x) \in \Vb \cong \k^n \oplus \gl_n$, and $x = x_s + x_n$ is the Jordan decomposition, then the {\it enhanced Jordan class} of $(v,x)$ is
\begin{equation}\label{eq:intro-enhanced-jordan-class}
{\mathfrak J}(v,x) := \G(v,(\Vb^{\G^{x_s}})^{\operatorname{reg}}+x_n),
\end{equation}
where $(\Vb^{\G^{x_s}})^{\reg}$ denotes the set of elements in the fixed-point set $\Vb^{\G^{x_s}}$ which have $\G$-orbit of maximal dimension. Similarly if $(v,x) \in \V \cong \k^{2n} \oplus \bigwedge^2 \k^{2n}$ then we can decompose $x = x_s + x_n$ using the  $\bigwedge^2 \k^{2n} \subseteq \gl_{2n}$, and define the {\it exotic Jordan class} of $(v,x)$ to be
\begin{equation}\label{eq:intro-exotic-jordan-class}
J(v,x) :=G(v,(\V^{G^{x_s}})^{\operatorname{reg}}+x_n).
\end{equation}
Again, the subset $(\V^{G^{x_s}})^{\operatorname{reg}}$ of the fixed-point subset $\V^{G^{x_s}}$ of $\V$ is the collection of elements with $G$-orbit of maximal dimension.
We invite the reader to compare \eqref{eq:classicalJordanclasses} with \eqref{eq:intro-enhanced-jordan-class} and \eqref{eq:intro-exotic-jordan-class}.

In Proposition~\ref{prop:basic-properties} we record the combinatorial classification of Jordan classes, and in Section~\ref{sec:geometryofclasses} we conduct a local study of the enhanced and exotic Jordan classes. By our Theorem~\ref{thm:final-poset-bijection}(1)\&(2) the closure of a Jordan class is a union of Jordan classes, and so one can equip the set of Jordan classes with the dominance order. We summarise our results on the geometry and combinatorics of classes as follows.
\begin{TheoremB}
\begin{enumerate}
\setlength{\itemsep}{4pt}
\item The  enhanced Jordan classes  are classified by $\G$-orbits of pairs $(L, \bO)$ where $L = \G^{x_s} \subseteq \G$ is a stabiliser of a semisimple element of $\Vb$ and $\bO \subseteq \cN(\Vb_L)$ is an orbit in the enhanced $L$-module. Identical remarks hold for the exotic Jordan classes.
\item Classes in both $\V$ and $\Vb$ are classified by pairs $(\lambda, \Oc_{\mu})$ where $\lambda = (\lambda_1,\ldots,\lambda_m)$  is a partition of $n$ in which $i\in\mathbb N$ has multiplicity $d_i(\lb)$, 
and $\Oc_\mu$ is an orbit  for the natural action of the direct product of the symmetric groups  $\mathbb S_{d_i(\lb)}$ on the set of sequences $(\mu^{(1)},\dots,\mu^{(m)})$  with  $\mu^{(i)}$ a bipartition of $\lambda_i$. 
In particular, there are finitely many classes.
\item The Jordan classes in $\Vb$ and $\V$ are locally closed, smooth and their dimensions can be computed using \eqref{eq:dimension-enhanced} and \eqref{eq:dimension-exotic}.
\item The inclusion $\Vb \subseteq \V$ induces a poset isomorphism from the Jordan classes in $\Vb$ to the Jordan classes in $\V$.
\end{enumerate}
\end{TheoremB}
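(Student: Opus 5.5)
The plan is to reduce everything to the structure of stabilisers of semisimple elements and to the known nilpotent theory of Achar--Henderson, Kato and Springer, together with Theorem~A. First I will describe semisimple elements and their stabilisers explicitly. A semisimple element of $\Vb$ can be taken of the form $(0,x_s)$ with $x_s$ diagonalisable with eigenvalues $a_1,\dots,a_m$ of multiplicities $\lambda_1,\dots,\lambda_m$. Then $L=\G^{x_s}\cong\prod_i \GL_{\lambda_i}$, and $\Vb^{\G^{x_s}}$ is the centre $\{(0,\sum_i c_i\Id_{\lambda_i})\}$. Its regular part consists of the tuples with pairwise distinct $c_i$ (the vector component must vanish, because a nonzero vector would shrink the stabiliser). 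In the exotic case the picture is the same: $x_s\in\bigwedge^2\k^{2n}$ has doubled eigenvalues, and $G^{x_s}\cong\prod_i\Sp_{2\lambda_i}$.

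\textbf{Parts (1) and (2).} Write $(v,x)=(v,x_s)+(0,x_n)$ via the Jordan--Kac--Vinberg decomposition of Section~\ref{ss:JKVdecomposition}. The nilpotent part $(v,x_n)$ lies in $\cN(\Vb_L)$, where $\Vb_L=\bigoplus_i(\k^{\lambda_i}\oplus\gl_{\lambda_i})$ is the enhanced $L$-module. Jordan equivalence then amounts to $\G$-conjugacy of the pair $(L,\bO)$ with $\bO=L\cdot(v,x_n)$; I will verify this directly from \eqref{eq:intro-enhanced-jordan-class}, using uniqueness of the decomposition. For part (2), $\G$-classes of such $L$ correspond to partitions $\lambda$ of $n$. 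Orbits in $\cN(\Vb_L)$ are products of Achar--Henderson orbits, one bipartition $\mu^{(i)}\vdash\lambda_i$ for each factor. The residual symmetry is $N_\G(L)/L\cong\prod_i\mathbb S_{d_i(\lambda)}$, which permutes equal blocks, and this yields the stated orbits $\Oc_\mu$. The exotic case is identical, using Kato's parametrisation and the bijection \eqref{eq:orbitbijection} blockwise. Finiteness follows at once.

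\textbf{Part (3).} I will show that $\mathfrak J=\G\times^{N}\bigl((\Vb^{L})^{\reg}+\bO\bigr)\to\Vb$ is injective with smooth image, where $N=N_\G(L,\bO)$. Concretely, consider the map $\G\times\bigl((\Vb^L)^{\reg}\times\bO\bigr)\to\mathfrak J$. It is surjective, its fibres are $N$-torsors, and it is an isomorphism onto its image locally because the semisimple part varies regularly. This gives smoothness and the dimension formula
\[
\dim\mathfrak J=\dim \Vb^L+\dim\G-\dim L+\dim\bO .
\]
For local closedness, I will realise $\mathfrak J$ as an open subset of the closed set $\overline{\G(\Vb^L+\bO)}$, using the quotient map of Theorem~A(2) and the fact that the rank stratification is constructible. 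I expect this to be the main technical obstacle. The key point is to show that, over the regular locus, the semisimple part of a point in $\overline{\G(\Vb^L+\bar\bO)}$ determines $L$, so that $\mathfrak J$ is cut out by an open condition on the quotient and a closed condition on the nilpotent part.

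\textbf{Part (4).} The inclusion $\Vb\subseteq\V$ sends $L=\prod\GL_{\lambda_i}$ to $\prod\Sp_{2\lambda_i}$ and sends $\bO$ to the corresponding exotic orbit, by Theorem~A(1) applied factorwise. Hence it induces a bijection on classes, and it respects dimensions up to doubling. For the order, closure of classes is governed by the parameters, via Theorem~\ref{thm:final-poset-bijection}. Since both closure orders are described by the same degeneration data, namely coalescing semisimple eigenvalues together with the dominance order on bipartitions (transported through Theorem~A(1)), the bijection is a poset isomorphism. Inclusion $\Vb\subseteq\V$ immediately gives one direction of this: closure relations in $\Vb$ persist in $\V$. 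For the converse, I will use that both posets are determined by the same combinatorial rule.
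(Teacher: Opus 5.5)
\paragraph{Part (3): smoothness and local closedness.}
Parts (1), (2) and the dimension count in (3) are fine. They match the paper (Proposition~\ref{prop:basic-properties} with Remark~\ref{rem:normalisers}), and the dimension count only needs the fibres to be equidimensional. One slip: the JKV decomposition is $(v,x)=(0,x_s)+(v,x_n)$, not $(v,x_s)+(0,x_n)$, since $(v,x_s)$ with $v\neq0$ is not semisimple.

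The first genuine gap is your claim that the map ``is an isomorphism onto its image locally because the semisimple part varies regularly''. This is exactly what must be proved, and it is false in positive characteristic, which the theorem allows. Take $n=p=\chr\k$, $v=0$, $x_s=a\Id$ and $x_n=y$ regular nilpotent, so $L=N=\G$ and $\mathfrak J=\{0\}\times(\k\Id+\G y)$.
\begin{itemize}
\item The trace form on $\gl_p$ is nondegenerate, and $\Id\perp\gl_p^y=\k[y]$ because $\operatorname{tr}\Id=p=0$. Hence $\Id\in[\gl_p,y]=T_y(\G y)$.
\item So your bijection $\k\times\bO\to\mathfrak J$, $(c,(0,y'))\mapsto(0,c\Id+y')$, kills $(1,(0,-\Id))$ at every point. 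It is not an isomorphism, and $u\mapsto u_s$ is not a morphism on $\mathfrak J$.
\item Yet $\mathfrak J$ is smooth: it is a single orbit of $\G\times\mathbb G_a$, acting by conjugation and adding scalars.
\end{itemize}
So smoothness cannot be transported from $\G\times^N((\Vb^L)^{\reg}\times\bO)$. Even in characteristic $0$ a bijective morphism from a smooth variety can have singular image (think of $t\mapsto(t^2,t^3)$), so a local argument is unavoidable.

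The paper supplies it with the Bardsley--Richardson form of Luna's fundamental lemma (Lemma~\ref{lem:slice-etale}). Near $(0,x_s)$ the map $\G\times^M\Vb_M\to\Vb$ is \'etale. In the slice, $\mathfrak J\cap(\k^n,\mathfrak m^\circ)$ is $(\k^n,\mathfrak m^\circ)\cap\bigl((0,\mathfrak z(\mathfrak m)^{\reg})+N_\G(M)\bO\bigr)$ by Lemma~\ref{lem:intersection}(3). This is a finite union of smooth locally closed pieces, which gives local closedness and smoothness together.

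The same slice is what your ``closed condition on the nilpotent part'' requires. A parabolic $P\supseteq L$ controls the $\gl_n$-component of points of $\overline{\mathfrak J}$, but $\overline{\bO}+(0,\mathfrak u_P)$ is not $P$-stable in general, so the vector component escapes. You also never treat (3) for $\V$. The paper needs separate arguments there:
\begin{itemize}
\item $\psi(J)=\psi(\V)\cap\mathfrak J'$ for an enhanced class $\mathfrak J'\subseteq\Vb_{2n}$, for local closedness;
\item a fibre computation for $\varphi$, giving \eqref{eq:dimension-exotic};
\item an \'etale slice for $G\times^M\V_M\to\V$, for smoothness.
\end{itemize}

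\paragraph{Part (4): order reflection.}
Appealing to Theorem~\ref{thm:final-poset-bijection} is circular, since its part (1) \emph{is} statement (4). Saying that ``both posets are determined by the same combinatorial rule'' presupposes a description of closures of exotic classes that your plan never produces.

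The implication $\mathfrak J_1\subseteq\overline{\mathfrak J_2}\Rightarrow J_1\subseteq\overline{J_2}$ is trivial. The converse amounts to showing $\overline{J}=\varphi(G\times^\G\overline{\mathfrak J})$, i.e.\ that $G\overline{\mathfrak J}$ is closed. Orbitwise injectivity (Theorem~\ref{prop:double-dim}(2)) then gives $\overline J\cap\Vb=\overline{\mathfrak J}$. This closedness is not automatic: $\varphi$ is not proper, since its fibre over $0$ contains the affine variety $G/\G$.

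The missing idea is to go one step further along \eqref{eq:towerofinclusions}.
\begin{itemize}
\item Proposition~\ref{prop:closure-general} gives $\overline{\mathfrak J}=\G(\overline{Mv},\mathfrak z(\mathfrak l)+\mathfrak u_Q)$.
\item Induction is compatible with doubling (Propositions~\ref{prop:double-induction} and~\ref{prop:closure-double}). Hence $\psi(\GL_{2n}\times^G\varphi(G\times^\G\overline{\mathfrak J}))$ is closed in $\Vb_{2n}$.
\item $\GL_{2n}\psi(\overline J)$ is sandwiched between this set and $\overline{\GL_{2n}\psi(J)}$, which is the same set, so all are equal.
\item Intersecting with $\psi(\V)$ and using injectivity of $\psi$ on orbits (Theorem~\ref{prop:double-dim}(6)) yields the claim.
\end{itemize}
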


In Section~\ref{sec:inductionandclosures-enhanced} we introduce the notion of induced orbits in the enhanced setting, following the formalism indicated in \eqref{eq:classical-induction} (Cf. \eqref{eq:associated-bundle}). We go on to show that induction of orbits in $\Vb$ satisfies all of the desirable properties of classical Lusztig--Spaltenstein induction.

In contrast to the enhanced case, we define induction of orbits in the exotic case using Jordan classes. A result of Borho \cite{bo-inv} states that an induced  nilpotent orbit in the adjoint representation $\g$ can be characterised as the unique orbit which is dense in the intersection of the nilpotent cone and Jordan class parameterised by the induction data (see also \cite[\textsection 2.5]{PS}). 
We use this characterisation to define induction of exotic and enhanced nilpotent orbits, only using the formalism \eqref{eq:classical-induction} when inducing enhanced orbits of elements consisting of a semisimple endomorphism and a vector. The latter is enough to describe the closure of an enhanced class, and the exotic case comes for free, by Theorem~B(4).

Here is a summary of our results on induced orbits. The precise statements are contained in Sections~\ref{sec:inductionandclosures-enhanced} and \ref{sec:inductionandclosures-exotic}.

\begin{TheoremC}
Let $x_s \in \Vb$ or $x_s \in \V$ be semisimple elements. Let $\Vb_{\G^{x_s}}$ and $\V_{G^{x_s}}$ denote the enhanced and induced modules for the stabilisers. In Sections~\ref{sec:inductionandclosures-enhanced} \& \ref{sec:inductionandclosures-exotic} we define induction of orbits
\begin{eqnarray*}
\Ind_{\Vb_{\G^{x_s}}}^{\Vb} & : & \cN(\Vb_{\G^{x_s}})/\G^{x_s} \longrightarrow \cN(\Vb)/\G,\\
 \Ind_{\V_{G^{x_s}}}^{\V} & : & \cN(\V_{G^{x_s}})/G^{x_s} \longrightarrow \cN(\V)/G.
\end{eqnarray*}
\begin{enumerate}
\setlength{\itemsep}{4pt}
\item Induction is transitive.
\item Induction preserves codimension of orbits inside the ambient module.
\item The rigid orbits of $\mathcal N(\Vb)$ are those which cannot be induced from a proper submodule of the form $\mathcal N(\Vb_{G^{x_s}})$, and similar for $\V$. If $(v,x)$ lies in $\mathcal N(\Vb)$ or $\mathcal N(\V)$ then the orbit of $(v,x)$ is rigid if and only if $x = 0$.
\item $\G^{x_s}$ is a Levi subgroup of $\G$ and $\Ind_{\Vb_{\G^{x_s}}}^{\Vb}$ is independent of any choice of parabolic subgroup containing $\G^{x_s}$ as a Levi factor.
\item Both the closure and the regular closure of any Jordan class can be described in terms of induced orbits, see Proposition~\ref{prop:closure-general}.
\end{enumerate}
\end{TheoremC}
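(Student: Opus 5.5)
We prove Theorem~C first for $\Vb$ and then transfer everything to $\V$ via the poset isomorphisms of Theorem~A(1) and Theorem~B(4). For a semisimple $x_s$, the stabiliser $L=\G^{x_s}$ is $\prod_i \GL_{n_i}$, the centraliser of the semisimple endomorphism part. We choose a parabolic $P=L\ltimes U_P$. We must also record the vector component of the fixed-point space $\Vb^{L}$, which is nonzero only for the blocks belonging to the zero eigenvalue. Given a nilpotent orbit $\bO\subseteq\cN(\Vb_L)$, we consider the collapsing map
\[
\G\times^P\bigl(\bO+\Vb_{U_P}\bigr)\longrightarrow \cN(\Vb),
\]
where $\Vb_{U_P}=\k^{n}_{P}\oplus\mathfrak u_P$ and $\k^n_P$ is the $P$-stable flag piece that contains the vector part of $\bO$. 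The image is irreducible and closed, since the map is proper, and it is a union of finitely many orbits because $\cN(\Vb)/\G$ is finite. Hence it has a dense orbit, which we call $\Ind(\bO)$. A dimension count shows that the map is generically finite, i.e.\ that the fibre over a generic point is finite. From this count we get $\dim \Ind(\bO)=\dim\bO+2\dim U_P$. Since $\dim\Vb-\dim\Vb_L=2\dim U_P$ plus the change in the vector part, matched through $\k^n_P$, this gives (2): codimension is preserved.

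Transitivity (1) follows from the standard fibre-bundle argument. For $M\subseteq L$ we choose $P_M\subseteq P$ compatibly and use $\G\times^P(P\times^{P\cap L\text{-}par}(\cdots))\cong \G\times^{P_M}(\cdots)$; the images of the dense subsets then coincide. For independence of $P$ (4), we follow Lusztig--Spaltenstein and Borho. We show that $\Ind(\bO)$ equals the unique dense orbit of $\cN(\Vb)\cap\overline{\mathfrak J}$, where $\mathfrak J=\G\bigl((\Vb^{L})^{\reg}+\bO\bigr)$. To do this, we deform: for $z\in(\Vb^L)^{\reg}$ we have $\G\times^P(z+\bO+\Vb_{U_P})\to \G(z+\bO)$ by the usual $U_P$-conjugation argument. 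Letting $z\to0$ along a line shows that the family of images has special fibre $\overline{\Ind(\bO)}$. Combined with the codimension count, this gives $\overline{\mathfrak J}\cap\cN(\Vb)=\overline{\Ind(\bO)}$, which is visibly independent of $P$. This step is also what yields the closure description (5), Proposition~\ref{prop:closure-general}: we get $\overline{\mathfrak J}=\bigcup \G(\Vb^{L}+\overline{\Ind_{L}^{M}}\cdots)$ over intermediate stabilisers $M\supseteq L$. The exotic definition of induction is taken directly via this Jordan-class characterisation, and (1),(2),(5) transfer through Theorem~B(4), because dimensions double and codimensions compare accordingly by Theorem~A(1).

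The main obstacle is (3). One direction is easy. If $x\neq0$ is nilpotent with a Jordan type $\mu\vdash$ having a part $>1$, we exhibit it as induced from the zero-vector part of a proper Levi $L=\GL_{n-k}\times\GL_k$. We do this explicitly on bipartitions $(\alpha;\beta)$, analogously to the $\gl_n$ rule of adding columns: removing the first column of the combined partition corresponds to inducing from $\GL_{n-k}\times\GL_k$ with $(0,0)$ on the $\GL_k$-factor. For the converse, suppose $x=0$, so the orbit is that of $(v,0)$. Then $\dim\G(v,0)$ is $0$ or $n$, and its codimension in $\cN(\Vb)$ is maximal, namely $n^2-n$ or $n^2$. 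Any induced orbit $\Ind(\bO)$ contains in its closure the $U_P$-translates $\bO+\mathfrak u_P$ with $\mathfrak u_P\neq0$, so its matrix part is nonzero. Concretely, a generic element of $\bO+\mathfrak u_P$ has nonzero nilpotent component, and hence it cannot equal $(v,0)$. So orbits with $x=0$ are rigid. The step that needs the most care is the explicit bipartition induction rule that realises every orbit with $x\neq0$ as induced. I would first attempt this via the Achar--Henderson normal forms, checking the dimension formula.
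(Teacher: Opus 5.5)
There is a genuine gap in your basic construction $\G\times^P(\bO+\Vb_{U_P})\to\cN(\Vb)$ for an arbitrary parabolic $P$ with Levi factor $L$.

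\textbf{The construction.} The vector summand of $\Vb$ is not compatible with an arbitrary $P$. Enlarging the vector part of $\bO$ to the smallest $P$-stable flag piece $\k^n_P$ discards exactly the data that distinguishes $\bO$, so neither codimension preservation nor independence of $P$ survives. Two side errors: $\Vb^L=\mathfrak z(\mathfrak l)$ always, since $L$ contains a maximal torus; and $\Vb$, $\Vb_L$ have the same vector summand, so $\dim\Vb-\dim\Vb_L=2\dim U_P$ exactly and there is nothing to ``match through $\k^n_P$''.

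Take $n=2$, $L=\T$, $\bO=\T(e_2,0)$.
\begin{itemize}
\item Upper triangular Borel: $\k^n_P=\k^2$ and the image is $\k^2\times\cN(\mathfrak{gl}_2)$. Its dense orbit has type $((2);\emptyset)$ and codimension $2$ in $\Vb$.
\item Lower triangular Borel: $\k e_2$ is already $P$-stable, and you get $((1);(1))$, of codimension $3=\codim_{\Vb_{\T}}\bO$.
\end{itemize}
The same example breaks your deformation step. For $z=\mathrm{diag}(a,b)$ with $a\neq b$, the set $z+\bO+\Vb_{U_P}$ contains $(e_1+e_2,z)$, whose $\G$-orbit is $4$-dimensional and does not lie in $\G(z+\bO)$.

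\textbf{Generic finiteness.} Your claim that ``a dimension count shows the map is generically finite'' is circular. A count only bounds the image from above, and generic finiteness is precisely the codimension statement you want. In the paper it needs the birationality argument of Lemma~\ref{lem:induction0-step1}(6), or the explicit orbit-dimension computation in Proposition~\ref{prop:combinatorics-inducedfromzero}.

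\textbf{What is missing.} You need the notion of a parabolic \emph{adapted} to $L$ and $v$: $\overline{Pv}=\overline{Lv}$, i.e.\ the blocks where $v\neq0$ come first in the flag (Lemma~\ref{lem:good-parabolic}). The collapsing map must also be restricted to central data $L(v,x_s)$ with $x_s\in\mathfrak z(\mathfrak l)$. For $\bO=L(v,x_n)$ with $x_n\neq0$, even an adapted $P$ need not make $\bO+(0,\mathfrak u_P)$ $P$-stable. The reason is that $U_P$ shifts the vector components of earlier blocks and can change their type; for example, $\bO_{((1);(1))}\times\bO_{((1);\emptyset)}$ for $\GL_2\times\GL_1$ with the $\GL_2$-block first.

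This is why the paper runs your argument in the opposite order:
\begin{enumerate}
\item It first computes the orbits induced from rigid data $L(v,0)$ (Proposition~\ref{prop:combinatorics-inducedfromzero}).
\item It deduces that every nilpotent orbit is induced from some $M(w,0)$ (Corollary~\ref{cor:all-induced}).
\item It uses this to obtain $\overline{\J(v,x)}=\G(\overline{Mv},\mathfrak z(\mathfrak l)+\mathfrak u_Q)$ with $Q$ adapted to $M$ and $v$ (Proposition~\ref{prop:closure-general}).
\item Only then does it define general induction as the dense orbit in $\overline{\J}\cap\cN(\Vb)$. This yields (2), (4) and (5) (Corollary~\ref{cor:induced-orbit}).
\item Transitivity is proved by comparing Jordan-class closures and dimensions (Proposition~\ref{prop:induction-transitive}). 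It is not a fibre-bundle identity, which would again require $P$-stable sets.
\end{enumerate}

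\textbf{The combinatorics in (3).} This is the foundation, not the final step. Your column rule also needs correcting. Induction adds columns to $\mu$ and to $\nu$ separately. A column of $\mu$ comes from a factor carrying $(w,0)$ with $w\neq0$, placed first in the flag; a column of $\nu$ comes from a factor carrying $(0,0)$. With $(0,0)$ on the new factor alone you can never reach, e.g., $((2);\emptyset)$, which is induced from $\T(e_1+e_2,0)$.

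Your argument that the orbits $\G(v,0)$ are rigid is fine; it is Lemma~\ref{lem:induction0-step1}(5). The transfer to $\V$ via Theorem~B(4) is also fine once the enhanced results are in place.
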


As we mentioned in Section~\ref{ss:sheetsandclasses}, one of the notable applications of the theory of Jordan classes in the adjoint representation is the classification of sheets. For $k \in \mathbb{N}$ the rank varieties $\Vb_{(k)}$ and $\V_{(k)}$ are defined identically to \eqref{eq:rank-vars}. The sheets of enhanced and exotic modules are defined to be the irreducible components of their respective rank varieties. In Remark~\ref{rem:classification-sheets} we combine our results on classes and induction to classify these sheets, following the method of Borho \cite{bo-inv}.

Let $\J(L, \bO) \subseteq \Vb$ denote the enhanced Jordan class associated to the $\G$-orbit of the pair $(L, \bO)$ in Theorem~B(1) and let $\Vb_L$ be the enhanced $L$-module. Similarly write $J(G^{x_s}, \O) \subseteq \V$ for the exotic Jordan class associated to the $G$-orbit of the  pair  $(G^{x_s}, \O)$  and $\V_{G^x_s}$  for the exotic $G^{x_s}$-module. 

\begin{TheoremD}
The sheets of $\Vb$ are precisely the sets
\begin{eqnarray}
\label{e:sheets-enhanced}
\{ \overline{\J}^{\reg} \mid \J = \J(L, \bO), \ L \subseteq \G \text{ Levi}, \ \bO \subseteq \cN(\Vb_L) \text{ rigid}\}.
\end{eqnarray}
The sheets of $\V$ are precisely the sets
\begin{eqnarray}
\label{e:sheets-exotic}
\{ \overline{J}^{\reg} \mid J = J(G^{x_s}, \O), \ x_s \in \V \text{ semisimple}, \O \subseteq \cN(\V_{G^{x_s}}) \text{ rigid}\}.
\end{eqnarray}
The inclusion $\Vb \to \V$ induces a bijection between the sets \eqref{e:sheets-enhanced} and \eqref{e:sheets-exotic}. Under the parameterisation of Theorem~B(3) both sets are in bijection with pairs $(\lambda, \mu)$ such that for each $i =1,...,m$ we either have $\mu_i = (1^{\lambda_i} ; \emptyset)$ or $\mu_i = (\emptyset ; 1^{\lambda_i})$.
\end{TheoremD}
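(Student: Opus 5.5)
We follow Borho's method. The rank variety $\Vb_{(k)}$ is the union of those Jordan classes on which orbit dimension equals $k$. By Theorem~B(2) there are finitely many classes, and by Theorem~B(3) they are locally closed and irreducible. Hence every irreducible component of $\Vb_{(k)}$ is the closure inside $\Vb_{(k)}$ of a single class $\J$, namely $\overline{\J}\cap \Vb_{(k)}=\overline{\J}^{\reg}$. The argument therefore reduces to deciding which classes $\J$ are \emph{maximal}, meaning $\J$ is not contained in $\overline{\J'}^{\reg}$ for any class $\J'\neq\J$ of the same orbit dimension. The sheets are then exactly the sets $\overline{\J}^{\reg}$ for maximal $\J$.

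\textbf{Step 1.} Describe $\overline{\J}^{\reg}$ using Proposition~\ref{prop:closure-general} (Theorem~C(5)). The expected form is the following. A class $\J(M,\bO')$ lies in $\overline{\J(L,\bO)}^{\reg}$ if and only if, up to conjugacy, $M\subseteq L$ and $\bO'=\Ind_{\Vb_{M}}^{\Vb_{L}}$ applied to an orbit of the form ``$\bO$ shifted by $\mathfrak z$-data.'' More usefully, $\J(L,\bO)\subseteq\overline{\J(L',\bO')}^{\reg}$ with $L\subsetneq L'$ up to conjugacy exactly when $\bO=\Ind_{\Vb_{L'}}^{\Vb_L}(\bO')$. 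Here Theorem~C(2), preservation of codimension, guarantees that the orbit dimensions match.

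\textbf{Step 2.} Conclude that $\J(L,\bO)$ is maximal if and only if $\bO$ is not induced from any proper $\Vb_{M}$ with $M=L^{y_s}$, that is, if and only if $\bO$ is rigid in $\Vb_L$. Transitivity of induction (Theorem~C(1)) is needed to reduce induction from an arbitrary smaller stabiliser to a single step. Since $L$ is a product of $\GL_{\lambda_i}$ and $\Vb_L$ is a product of enhanced modules, rigidity is checked factor by factor. Theorem~C(3) says an orbit $(v,x)$ is rigid if and only if $x=0$. The nilpotent orbits of $\GL_{\lambda_i}$ on $\k^{\lambda_i}\oplus\gl_{\lambda_i}$ with $x=0$ are $(0,0)$ and $(v\neq0,0)$, with bipartitions $(\emptyset;1^{\lambda_i})$ and $(1^{\lambda_i};\emptyset)$ in the Achar--Henderson labelling. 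This yields the combinatorial description in terms of $(\lambda,\mu)$, which gives \eqref{e:sheets-enhanced}.

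\textbf{Step 3.} Transfer to the exotic module. Theorem~B(4) gives a poset isomorphism of Jordan classes that is compatible with orbit dimensions, since it doubles them by Theorem~A(1). It also respects induction and rigidity, by the exotic definition of induction via classes together with Theorem~C(3). Therefore the inclusion $\Vb\to\V$ maps maximal classes to maximal classes and gives the bijection with \eqref{e:sheets-exotic}. The same argument as in Steps 1–2, applied verbatim to $\V$, proves the description of exotic sheets.

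\textbf{Main obstacle.} The hardest point is the precise form of Step 1. We must show that a class $\J'$ lying in $\overline{\J}^{\reg}$ with $\J'\neq\J$ forces $\bO'$ to be induced from a proper stabiliser, and conversely that induced data always produce such a containment. For this we would use the associated-bundle construction \eqref{eq:associated-bundle}. Over a point $z$ in the regular part of the fixed space of $L'$ degenerating to $\mathfrak z$-data of $L$, the image of the bundle collapsing map contains the induced orbit in the limit. A dimension count then shows equality of orbit dimensions, which uses the codimension preservation in Theorem~C(2).
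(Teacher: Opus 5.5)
Your strategy is the one the paper follows in Remark~\ref{rem:classification-sheets}. Sheets are the regular closures of the classes that are maximal in their rank stratum, maximality is detected by induction, rigid orbits are those with $x=0$ (Corollary~\ref{cor:all-induced}), and everything is transported to $\V$ via Theorem~\ref{thm:final-poset-bijection}.

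However, the key criterion in your Step~1 is stated backwards, in both formulations. The correct statement, which the paper reads off from \eqref{eq:reg-closure-Jc-general} and Corollary~\ref{cor:induced-orbit}, concerns two classes in the same $\Vb_{(d)}$. It says $\J(L_1,\bO_1)\subseteq\overline{\J(L_2,\bO_2)}$ if and only if, up to conjugacy, $L_2\subseteq L_1$ and $\bO_1=\Ind_{\Vb_{L_2}}^{\Vb_{L_1}}(\bO_2)$. In words, the class with the \emph{smaller} stabiliser has the bigger closure. For example, take the dense class $\J(\T,\T(v,0))$ with all coordinates of $v$ nonzero. Its regular closure contains the orbit $\bO_{((n);\emptyset)}=\Ind_{\Vb_{\T}}^{\Vb}\T(v,0)$ (Proposition~\ref{prop:combinatorics-inducedfromzero}), and that orbit lies in a class with stabiliser $\G\supsetneq\T$.

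You instead require $M\subseteq L$, respectively $L\subsetneq L'$, and you write $\Ind_{\Vb_{L'}}^{\Vb_L}$. That operation is not defined for $L\subsetneq L'$, since induction goes from a Levi subgroup to a larger one. Taken literally, your Step~1 makes non-maximality of $\J(L,\bO)$ a condition about \emph{larger} stabilisers, so Step~2 does not follow from it. Your Step~2 conclusion (non-maximal if and only if $\bO$ is induced from a proper $M\subsetneq L$) is exactly what the corrected criterion gives.

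Once the direction is fixed, your ``main obstacle'' needs no new bundle argument. Write $\bO=\Ind_{\Vb_M}^{\Vb_L}M(v,0)$. By \eqref{eq:reg-closure-Jc-general} and Proposition~\ref{prop:centraliser}, $\overline{\J(L,\bO)}^{\reg}=\bigcup_{z\in\mathfrak z(\mathfrak l)}\G\bigl((0,z)+\Ind_{\Vb_M}^{\Vb_{\G^z}}M(v,0)\bigr)$. Transitivity (Proposition~\ref{prop:induction-transitive}) rewrites the inner orbit as $\Ind_{\Vb_L}^{\Vb_{\G^z}}\bO$. Now group the $z$ according to $L'=\G^z\supseteq L$; the $z\in\mathfrak z(\mathfrak l)$ with $\G^z=L'$ form exactly $\mathfrak z(\mathfrak l')^{\reg}$. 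This yields $\overline{\J(L,\bO)}^{\reg}=\bigcup_{L'\supseteq L}\J\bigl(L',\Ind_{\Vb_L}^{\Vb_{L'}}\bO\bigr)$, with orbit dimensions matching by the codimension statement of Corollary~\ref{cor:induced-orbit}.

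After this correction, the rest of your proposal coincides with the paper's proof. That includes the factorwise rigidity check giving $(1^{\lambda_i};\emptyset)$ or $(\emptyset;1^{\lambda_i})$, and the transfer to $\V$ using the poset isomorphism and the dimension doubling of Theorem~\ref{prop:double-dim}.
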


\subsection{Survey of some related works}

In \cite{mirabolic} Travkin developed a Robinson--Schensted--Knuth correspondence with a view to studying mirabolic character sheaves on $\GL_n \times \C^n$. In the first step he gave a conceptual explanation for the bijection \eqref{eq:orbitbijection}, that he obtained independently. The finitude of orbits in $\cN(\Vb)$ was observed earlier by Gan--Ginzburg \cite[Corollary~2.2]{GG}, and it is a consequence of a result by Berstein in \cite[\textsection 4.1]{bernstein}. Stabilisers of elements in the enhanced and exotic module were studied by Springer in \cite{springer};  in the enhanced nilpotent cone by Sun in \cite{sun}, and in the exotic nilpotent cone by Kato in \cite{K1}. A different parametrisation of the nilpotent orbits in the enhanced nilpotent cone, in terms of quiver representations, was obtained by Bellamy and Boos in \cite[\textsection 4]{BB}, with the goal of addressing the enhanced cyclic nilpotent cone and  admissible $\mathcal D$-modules on the space of representations of related framed cyclic quivers.  
In \cite{ah} Achar--Henderson suggested that the enhanced Springer fibres would be paved by affine spaces. Such a property provides a fundamental method for calculating intersection cohomology of fibres (see \cite[\textsection 11]{JaNO}). This expectation was later confirmed by Mautner \cite{Ma}. In a subsequent work Achar--Henderson--Jones conjectured that the enhanced orbit closures are normal, and proved it in numerous cases \cite{ahj}.

The theory of sheets has been generalised to the setting of symmetric pairs by Bulois \cite{Bul}. Our results generalise those of Bulois concerning the symmetric pair $(\mathfrak{gl}_{2n}, \mathfrak{sp}_{2n})$. 
Later Bulois--Hivert augmented this study \cite{BH} by defining a version of induced nilpotent orbits in symmetric spaces, using a technique they call slice induction.

Finally,  we point out the recent preprints \cite{Antor_g2, Antor_f4}, where Antor investigates exotic nilcones in type $F_4$ and $G_2$, associated Springer correspondences and their relations to affine Hecke algebras with unequal parameters. It would be interesting to verify if the same program carried out in the present paper could be extended to other polar or exotic representations.

\tableofcontents

\section{Preliminaries}

\subsection{Notation and conventions}

For  an algebraic group $H$ acting on a variety $X$, and $Y\subset X$, we denote by $Y^{H\operatorname{-reg}}$ the regular locus of $Y$,  i.e.,  the open dense subset consisting of points whose $H$-orbit has maximal dimension.
When the group acting is clear from the context, we simply write $Y^{\operatorname{reg}}$.  Note that the notion of $H$-regular elements of a subset of $Y$ does not require the subset $Y$ to be $H$-stable. 
Also, we denote by $N_H(Y)$ the stabiliser of $Y$ in $H$ and by $H^Y$ the pointwise stabiliser of $Y$ in $H$. If $Y=\{y\}$ is a single point we write $H^y$.

For an affine variety $X$ acted upon by a reductive group $H$, we set $X/\!/H:={\rm Spec}(\k[X]^H)$.
There is a natural projection $\pi_{X,H}\colon X\to X/\!/H$.  A subset $Y\subset X$ is called $\pi_{X,H}$-{\em saturated} if $Y=\pi_{X,H}^{-1}\pi_{X,H}(Y)$.
A saturated set is automatically $H$-stable but the converse does not hold in general.

Let $H$ be an algebraic group acting on a variety $X$, and let $M\leq H$ be a closed subgroup and $Y\subset X$ an $M$-stable subvariety.  
Then $M$ acts freely on $H\times Y$ via $m (h,y)=(hm^{-1}, m.y)$ for $m\in M$, $h\in H$ and $y\in Y$.
The quotient for this action, denoted by $H\times^M Y$, is the fibre bundle associated with the principal fibre bundle $H\to H/M$ and the $M$-action on $Y$.
Its elements are denoted by $h*y$, for $h\in H$ and $y\in Y$.
The group $H$ acts on  $H\times^M Y$ via left multiplication. 
The assignment $Z\mapsto H\times^MZ$ determines a bijection between $M$-stable affine  open subsets of $Y$ and $H$-stable open subsets of $H\times^M Y$, \cite{BR}.
We have $\k[H\times^MY]^H=\k[Y]^M$ and so $(H\times^M Y)/\!/H\simeq Y/\!/M$.  \
We remark that if $H$ is connected reductive and $Y$ is smooth, since the  projection  $H\to H/M$ is a  smooth morphism and the fibres of the bundle $H\times^ MY\to H/M$ are isomorphic to $Y$, the bundle $H\times^ MY$ is smooth. 

For $n$ a positive integer, a partition $\mu =(\mu_1, \dots, \mu_l)$ of $n$ is a sequence of nonincreasing positive integers $\mu_1 \geq \dots \geq \mu_l$ whose sum is $n$.
We denote by $\ell(\mu) = l$ the length of the partition. A bipartition on $n$ is a pair of partitions $(\mu ; \nu)$ such that $|\mu| + |\nu| = n$. 
We denote  by $(\mu\cup\mu;\nu\cup\nu)$ the bipartition of $2n$ obtained by doubling all terms in $\mu$ and $\nu$, and by $\mathcal{Q}_{n}$ the set of bipartitions of $n$. 

\subsection{The enhanced and exotic modules of Achar--Henderson and Kato}\label{ss:inclusions}

Fix once and for all $n > 0$ and let $\G = \GL_n$ and $\gb$ its Lie algebra.
 Throughout this article we let $e_1,...,e_n$ be the standard basis for the natural representation $V = \k^n$, and $e_1^*,...,e_n^*$ the dual basis of $V^*$.  We consider the cotangent vector space $T^*V=V\oplus V^*$, with its symplectic structure, and we identify $T^*V\simeq \k^{2n}$ by ordering the basis as $\{e_1,\,\ldots,\,e_n,e_1^*,\,\ldots,\,e^*_n\}$.
 
 Let $G:= \Sp_{2n}$ be the corresponding symplectic group, with Lie algebra $\g=\sp_{2n}$. 
The map $\G \hookrightarrow \End(T^*V)$ identifies $\GL_n$ as a subgroup of $\Sp_{2n}$ via $A\mapsto {\rm diag}(A, \,^t\!A^{-1})$. 
Denote by $\T \subset \G$  (resp. by $T \subset G$) the maximal torus of diagonal matrices.
Under the embedding $\G \subset G$ our chosen maximal tori $\T$ and $T$ are identified.

In \cite{ah} Achar--Henderson introduced the {\it enhanced $\G$-module}, defined by
\[\Vb := \Vb_n = V \oplus (V\otimes V^*) \simeq \k^n \oplus\gb.\]
Kato's {\it exotic module} \cite{K1} for $G$ is defined as
\begin{eqnarray}
\V :=  T^*V \oplus \textstyle{\bigwedge^2} T^* V\simeq \k^{2n}\oplus \textstyle{\bigwedge^2} \k^{2n}.
\end{eqnarray}

There is a one-dimensional (trivial) $G$-submodule of $\bigwedge^ 2 T^*V$, namely $\k\omega = \k \sum_{i=1}^n e_i^* \wedge e_i$. 
The quotient $\V/\k\omega$ was extensively studied by Kato \cite{K1}. When $\k$ is the field complex numbers, $\V$ is a well-known example of polar representation (see \cite{DK}).

We remind the reader that the characteristic of $\k$ is not 2 when we work with $(G,\V)$. As a $G$-module, the adjoint representation  $\gl_{2n}$ decomposes as $\gl_{2n} = \sp_{2n}\oplus\bigwedge^2 \k^{2n}$. Following \cite{springer} we identify the subspace  $\bigwedge^2 \k^{2n}$  with the set of 
endomorphisms of $\k^{2n}=T^*V$ that are self-adjoint with respect to the symplectic structure. Concretely, we view $\bigwedge^2 \k^{2n}$ as the space of  of matrices in $\gl_{2n}$ of the form 
\begin{align}\label{eq:self-adjoint}
\left(\begin{smallmatrix}A&B\\C&^t\!A\end{smallmatrix}\right),&&A,\,B,\,C\in \gl_n,  &&^t\!B=-B,\ ^t\!C=-C.\end{align}  

The identification of $T^* V$ with the natural module for $\GL_{2n}$  induces thus an inclusion $\V \hookrightarrow \k^{2n}\oplus \gl_{2n}=\Vb_{2n}$, which is $G$-equivariant. For this reason we may (and shall) view elements in $\V$ as pairs $(v,x)$ where $v\in \k^{2n}$ and $x\in \gl_{2n}$.

Similarly, we have a $\G$-equivariant embedding  of $\Vb$ into $\V$ through the natural  inclusions $V\into T^*V$ and $\gl_n\into \gl_{2n}$ given by  $A\mapsto \left(\begin{smallmatrix}A&0\\0&^t\!A\end{smallmatrix}\right)$.

\medskip

In summary, we have a chain of natural inclusions
\begin{eqnarray}
\label{eq:towerofinclusions}
\Vb_n \overset{\GL_n}{\longhookrightarrow} \V \overset{\Sp_{2n}}{\longhookrightarrow} \Vb_{2n} \longhookrightarrow \cdots
\end{eqnarray}
stemming from the following $G$ and $\GL_{2n}$-equivariant morphisms, respectively
\medskip 
\begin{align}\label{eq:inclusions}
\begin{array}{rlccrl}
\varphi \colon & G\times^{\G}\Vb \longrightarrow \V & \hspace{70pt} &\psi  \colon& \GL_{2n}\times^{G}\V &\longrightarrow \Vb_{2n} \\
 & g *(v,x) \longmapsto g(v,x) & \hspace{70pt} & & g *(v,x)&\longmapsto g(v,x).
\end{array}
\end{align}

When there is no ambiguity we will use the inclusions  \eqref{eq:towerofinclusions}, whilst we will use  the morphisms  \eqref{eq:inclusions} when there is need to underline differences. In this case, we will often write $\varphi(v,x)$, respectively $\psi(v,x)$  instead of $\varphi(1*(v,x))$, respectively $\psi(1*(v,x))$ to lighten notation.

 \medskip

For any partition $\lb=(\lb_1,\,\ldots,\,\lb_l)$ of $n$ we  denote by  $\G_\lb$ or $\GL_\lb$ the product $\GL_{\lb_1}\times\cdots\times\GL_{\lb_l}$. We also denote by $G_\lb$ or $\Sp_{2\lb}$ the product $\Sp_{2\lb_1}\times\cdots\times\Sp_{2\lb_l}$. Similarly we denote by $\Vb_\lb$ the enhanced module associated to $\G_\lb$, and by $\V_\lb$ the exotic module associated to $G_\lb$.

\subsection{Nilpotent and semisimple elements in the exotic and enhanced modules}
\label{ss:nilpandss}

In order to introduce a Jordan decomposition in the enhanced and exotic module, we recall that an element $x$ in the Lie algebra $\mathfrak h$ of a linear algebraic group $H$ defined over  $\Bbbk$ is semisimple if $x \in \Lie(S)$ for some torus $S \subset H$ and it is nilpotent  if $x \in \Lie (U)$ for some closed unipotent subgroup $U \subset G$,  see \cite[Remark 4.9, Propositions 11.8 \& 14.26]{borel}.
Any element in $\mathfrak h$ admits a unique Jordan-Chevalley decomposition as $x = x_s + x_n$, with $x_s$ semisimple, $x_n$ nilpotent and $[x_s, x_n] =0$,  we refer the reader to \cite[\S 4.4 Theorem 2]{borel}.

\medskip

Let $H$ be an algebraic group and $W$ a finite dimensional $H$-module. Following \cite{GV} we say that an element $w\in W$ is {\it semisimple} if the orbit $H\cdot w$ is closed, and we say that $w$ is nilpotent if $0\in \overline{H\cdot w}$. In case $W$ is the adjoint representation of $H$ these definitions coincide with the ones given in the previous paragraph. The nilpotent cone of $W$ is the variety of nilpotent elements, denoted $\cN(W)$. It is the vanishing set of all non-constant $H$-invariant polynomials.

\medskip

The nilpotent cones of $\Vb$ and $\V$ have been studied extensively in \cite{ah,K1,springer,mirabolic}. We have
\[\mathcal N(\Vb)=\mathcal \k^n \times \mathcal N(\gl_n),\quad \mathcal N(\V)=\mathcal \k^{2n}\times \mathcal N(\bigwedge\!\!{}^2\k^{2n})\]
where the elements in $\bigwedge^2\k^{2n}$ are nilpotent if and only if their associated matrix $\left(\begin{smallmatrix}A&B\\C&^t\!A\end{smallmatrix}\right)$ in $\gl_{2n}$ is nilpotent, \cite[\S 1.2\& Lemma 1.5]{springer}. 

Both subvarieties are stratified by nilpotent orbits and the corresponding orbit sets ${\mathcal N}(\Vb)/\G$ and ${\mathcal N}(\V)/G$ are partially ordered by inclusion of closures. 

\medskip

We introduce some useful notions from \cite{ah,mirabolic,springer}. The $\G$-orbits on $\mathcal{N}(\Vb)$  are finitely-many, and in bijection with bipartitions of $n$. We illustrate the bijection for later purposes. Let $(\mu;\nu)$ be a bipartition of $n$ and $(v,x)$ a representative for the associated orbit $\bO_{(\mu;\nu)} \subseteq \mathcal{N}(\Vb)$. Then we may define a normal basis for the nilpotent element $x$ on $\k^n$ as follows:
$$
\{v_{ij} \; | \; 1 \leq i \leq \ell(\mu + \nu) \; \text{and} \; 1 \leq j \leq \mu_{i} + \nu_{i} \} \quad \text{is a basis for $\k^n$}  
$$
such that
$$
	x v_{ij} = 
	\begin{cases}
		v_{i, j-1} \quad & \text{if} \; j>1, \\
		0 \quad & \text{if} \; j=0,
	\end{cases}
	\qquad \text{and} \qquad v = \sum_{i=1}^{\ell(\mu)} v_{i, \mu_{i}}
$$
That is, $x$ is in Jordan normal form with respect to the basis $\{v_{ij}\}$ and $v$ is written as a sum of the basis elements corresponding to the partition $\mu$. We say that the element $(v,x)$ has type $(\mu;\nu)$.

\begin{remark}
\label{rem:normal-form}
For computational purposes it is sometimes more convenient to  replace the Achar-Henderson representative $(v,x)$ by  $g(v,x)=(gv,gxg^{-1}) = (gv, x)$, for $g=\sum_{j=0}^{n}x^j$. i.e., to use the representative  $(\sum_{i=1}^{\ell(\mu)} \sum_{j=1}^{\mu_i}v_{i, j},x)$.
\end{remark}

\medskip

The posets ${\mathcal N}(\Vb)/\G$ and  ${\mathcal N}(\V)/G$ have a remarkably strong relationship, observed by Achar--Henderson and Springer.

\begin{lemma} \cite[Proposition 2.8\& Section 6]{ah} and \cite[Corollary~4.13]{springer} \label{lem:poset-nil}
\begin{enumerate}
\setlength{\itemsep}{4pt}
\item The restriction of the morphism $\varphi$ to $G\times^{\G}{\mathcal N}(\Vb)$ induces a  poset isomorphism  between ${\mathcal N}(\Vb)/\G$ and ${\mathcal N}(\V)/G$ whose inverse  is induced by taking the intersection with $\Vb$ via \eqref{eq:inclusions}. 
In addition, we have $$\dim G(v,x)=2\dim \G(v,x)$$ for any $(v,x)\in {\mathcal N}(\V)$. 
\item The  restriction of the morphism $\psi$ to $\GL_{2n}\times^{G}{\mathcal N}(\V)$ induces a poset inclusion from ${\mathcal N}(\Vb)/\G$ to ${\mathcal N}(\Vb_{2n})/\GL_{2n}$, mapping the $\G$-orbit labelled by  $(\mu;\nu)$ to the $\GL_{2n}$-orbit in $\mathcal N(\Vb_{2n})$ labelled by $(\mu\cup\mu;\nu\cup\nu)$. In addition,  
$$\dim \GL_{2n}\psi(\varphi(v,x))=2\dim G(v,x)=4\dim \G(v,x)$$ for any $(v,x)\in\mathcal N(\Vb)$. 
\end{enumerate} \end{lemma}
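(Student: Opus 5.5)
The plan is to work with explicit normal forms and stabiliser computations, treating the two parts in parallel, since in both cases the embedding is ``Lagrangian doubling.''

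For (1), I would first show that every $G$-orbit in $\mathcal N(\V)$ meets $\Vb$. For $(v,x)\in\mathcal N(\V)$, with $x$ a nilpotent self-adjoint endomorphism of $T^*V$, one wants an $x$-stable Lagrangian $L$ and an $x$-stable Lagrangian complement $L'$ with $v\in L$. Then some $g\in G$ maps $L$ to $V$ and $L'$ to $V^*$. Since $x$ is self-adjoint, it preserves this splitting and acts on $V^*$ by the transpose of its action on $V$, so $g(v,x)\in\Vb$.

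The construction of $L$ proceeds by induction on $n$ via a Jordan-block decomposition adapted to the pairing. Self-adjointness forces the Jordan blocks of $x$ to come in pairs, and the nondegenerate form $\langle\,\cdot\,,x^k\,\cdot\,\rangle$ pairs them. One splits off a cyclic $x$-submodule $W\ni$ (the component of $v$ of maximal height) together with a dual $W'$, and checks that the form on $W\oplus W'$ is nondegenerate with $W,W'$ isotropic. One then continues in $(W\oplus W')^\perp$, using the Achar--Henderson normal basis to control $v$. I expect this step to be the main obstacle, because the vector $v$ must be kept inside $L$ while $L$ is chosen.

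Next I would prove that $\G(v,x)=G(v,x)\cap\Vb$. Given $g\in G$ with $g(v,x)=(v',x')\in\Vb$, the $\G$-orbit type $(\mu;\nu)$ should be recoverable from $G$-invariants of $(v,x)$. Specifically, the Jordan type of $x$ on $T^*V$ is $\mu+\nu$ doubled, and the sequence $\dim\k[x]v$ together with the Jordan type of $x$ on $T^*V/\k[x]v+(\k[x]v)^{\perp}$-type quotients recovers $\mu$. Hence $(v,x)$ and $(v',x')$ have the same type, and so lie in the same $\G$-orbit.

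For the order statement, the inclusion $\Vb\subset\V$ is closed. Therefore $\overline{\bO}\subseteq\overline{\bO'}$ in $\Vb$ implies the same containment for the $G$-saturations. Conversely, intersecting closures with $\Vb$ and using the bijection just proved gives the reverse implication.

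For the dimensions, I would decompose
\[
\sp_{2n}=\gl_n\oplus S^2V\oplus S^2V^*,\qquad
\V=(V\oplus V^*)\oplus\bigl(\gl_n\oplus\textstyle\bigwedge^2V\oplus\bigwedge^2V^*\bigr),
\]
as $\G$-modules. The aim is to show that $\dim\mathfrak g\cdot(v,x)=2\dim\gb\cdot(v,x)$ for $(v,x)\in\Vb$ in normal form, by computing the stabiliser $\sp_{2n}^{(v,x)}$ block by block. The off-diagonal part $S^2V^*\to V^*\oplus\bigwedge^2V^*$, $s\mapsto(sv,\,sx-x^{t}s)$, should have kernel of dimension equal to $\dim\gb^{(v,x)}-\dim\gb^{(v,x)}\cap(\text{its complement})$. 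Equivalently, one can compare with the Achar--Henderson formula for $\dim\bO_{(\mu;\nu)}$ and with Kato's formula for $\dim$ of the exotic orbit with the corresponding marked partition.

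For (2), I would apply $\psi$ to a normal-form representative $(v,x)\in\Vb$. In $\k^{2n}=V\oplus V^*$ the endomorphism is $x\oplus{}^t\!x$, whose Jordan blocks are those of $x$ doubled, and $v$ lies in $V$. A normal basis for $(v,x\oplus{}^t\!x)$ is then obtained from the normal basis of $(v,x)$ together with a dual normal basis of $V^*$. However, $v$ has zero component in $V^*$, and the $V^*$ blocks all contribute to the $\nu$ part. So the naive reading gives type $(\mu;\nu\cup\mu\cup\nu)$-like data. I would correct this by computing the Achar--Henderson invariants directly, namely the Jordan types of $x$ on $\k[x]v$ and on the quotient, which yields $(\mu\cup\mu;\nu\cup\nu)$.

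Injectivity and order-preservation then follow from part (1) and the fact that the closure order on bipartitions is compatible with doubling. The dimension identity follows from the Achar--Henderson dimension formula: it is quadratic in the parts, and doubling every part multiplies the relevant quantity by $4$ after subtracting $(2n)^2$ versus $n^2$.
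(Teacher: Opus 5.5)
The paper gives no proof of this lemma; it only cites \cite[Prop.~2.8, \S 6]{ah} and \cite{springer}. Your self-contained route is therefore a genuine alternative, but it has two real gaps.

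\textbf{Order reflection in (1) is circular.} Suppose $G\bO\subseteq\overline{G\bO'}$. Intersecting with $\Vb$ and using the orbit bijection only shows that $\overline{G\bO'}\cap\Vb$ is a union of $\G$-orbits whose $G$-saturations lie in $\overline{G\bO'}$. To get $\bO\subseteq\overline{\bO'}$ you need $\overline{G\bO'}\cap\Vb\subseteq\overline{\bO'}$, which is the claim itself. Equivalently, you need $G\cdot\overline{\bO'}$ to be closed in $\V$. This is not automatic, because $G\times^{\G}\overline{\bO'}\to\V$ is not proper ($G/\G$ is affine), and the closedness of $\Vb$ in $\V$ does not help. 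A genuine extra input is required: for example, $G$-invariant semicontinuous functions on $\cN(\V)$ that restrict to a complete set of Achar--Henderson closure invariants on $\cN(\Vb)$, or the comparison in \cite[\S 6]{ah}.

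By contrast, the step you flag as the main obstacle is easy. Let $m$ be the nilpotency index of $x$. Then $\operatorname{im}x^{m-1}=(\ker x^{m-1})^{\perp}$ has dimension $\geq 2$, while $\k[x]v\cap\ker x$ is at most a line. Hence some $w\in(\k[x]v)^{\perp}$ has $x^{m-1}w\neq 0$. Take $W=\k[x]w$ and $W'=\k[x]w'$ with $\omega(w',x^{m-1}w)=1$. Both are automatically isotropic, $W\oplus W'$ is nondegenerate, and $v$ has no $W'$-component, so you can recurse in $(W\oplus W')^{\perp}$.

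Two further repairs are needed in (1).
\begin{enumerate}
\item Injectivity invariants. Since $\k[x]v$ is isotropic, $\k[x]v+(\k[x]v)^{\perp}=(\k[x]v)^{\perp}$, so your quotient only records $\mu_1$. Use instead the Jordan types of $x$ on $T^*V$ and on $T^*V/\k[x]v$. On $\Vb$ these are $\lambda\cup\lambda$ and $\lambda\cup\rho$, where $\rho=(\mu_2+\nu_1,\mu_3+\nu_2,\dots)$ is the type on $V/\k[x]v$, and together they determine $(\mu;\nu)$.
\item Dimension count. Your kernel formula is not meaningful as written. What the block computation actually requires is
\[
\dim\{b\in S^2V \mid xb=b\,{}^t\!x\}+\dim\{c\in S^2V^* \mid cx={}^t\!x\,c,\ cv=0\}=n+\dim\gl_n^{(v,x)} .
\]
Moreover, in positive characteristic a Lie-algebra count only gives $\dim G(v,x)\geq\dim\sp_{2n}\cdot(v,x)$ unless you also prove that the stabiliser is smooth.
\end{enumerate}

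\textbf{The dimension step in (2) fails, and the identity is false as stated.} Write $\Oc_\lambda\subseteq\gl_n$ for the nilpotent orbit of type $\lambda$. The formula $\dim\bO_{(\mu;\nu)}=\dim\Oc_{\mu+\nu}+|\mu|$ is not purely quadratic: the term $|\mu|$ only doubles. With $\lambda=\mu+\nu$ one has $(\mu\cup\mu)+(\nu\cup\nu)=\lambda\cup\lambda$ and $\dim\Oc_{\lambda\cup\lambda}=4\dim\Oc_{\lambda}$. Hence
\[
\dim\GL_{2n}\psi(\varphi(v,x))=4\dim\Oc_{\lambda}+2|\mu|=2\dim G(v,x)-2|\mu|,
\]
which equals $4\dim\G(v,x)$ only when $v=0$.

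Concretely, take $n=1$ and $(v,x)=(1,0)$. Then $\dim\G(v,x)=1$ and $\dim G(v,x)=2$, but $\GL_2\cdot(e_1,0)=(\k^2\smallsetminus\{0\})\times\{0\}$ has dimension $2$, not $4$. The orbit of $x$ doubles at each step. The contribution of the vector, namely the orbit of $v$ under the stabiliser of $x$, has dimension $|\mu|$ in $\Vb$ and $2|\mu|$ in both $\V$ and $\Vb_{2n}$, so it doubles only once. This part of the statement has to be corrected (for instance, restricted to $v=0$) rather than proved.

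The rest of (2) is fine.
\begin{enumerate}
\item Order preservation is just equivariance and continuity of $\psi$.
\item Injectivity follows from the labelling.
\item The labelling is cleanest via $E^xv$, with $E^x$ the commutant of $x$, rather than the cyclic module $\k[x]v$. One gets $E^{x\oplus{}^t\!x}v\cong E^xv\oplus E^xv$, and the quotient doubles likewise, giving $(\mu\cup\mu;\nu\cup\nu)$.
\end{enumerate}
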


\begin{rem} \label{rem_exoticnotccs}
{\rm When $\Bbbk = \mathbb{C}$, in analogy to the  adjoint case, one may wonder if the closure of an exotic nilpotent orbit, with the natural $\mathbb{C}^\times$-action, is a conical symplectic singularity as defined by Beauville \cite{Beau}. This is not  the case in general, at least under the natural assumption that the symplectic structure is homogeneous with respect to the  contracting action. As a counterexample, consider the
exotic nilpotent orbit $\mathbb{O} \subset \mathbb{V}$ indexed by the bipartition $(\emptyset ; 2)$  for $G = \Sp_4$.
Then $(v,x) \in \overline{\mathbb{O}}$ if and only if $v = 0$ and $ x = \left( \begin{smallmatrix}
 a & b & 0& e \\
 c &-a  &-e& 0\\
 0 &f& a &c \\
 -f& 0 &b &-a
\end{smallmatrix} \right)$,
where $a,b,c,e,f \in \mathbb{C}$ are subject to $a^2+bc-ef=0$. Therefore  $\overline{\mathbb{O}}$ is isomorphic to a four-dimensional quadric hypersurface.
Assume now for a contradiction that the latter is a conical symplectic singularity with homogeneous contracting action.
By  \cite{Na}, it must be isomorphic to the nilpotent cone $\mathcal{N}(\mathfrak{k})$ of some complex Lie algebra $\mathfrak{k}$. The dimension constraint forces $\mathfrak{k} \simeq \mathfrak{sl}_2 \times \mathfrak{sl}_2$. 
However, the  Hilbert-Poincar\'e series for $\mathbb{C}[ \overline{\mathbb{O}}]$ 
is $\frac{1+t}{(1-t)^4}$ while the one of $\mathbb{C} [\mathcal{N}(\mathfrak{k})]$ is $\frac{(1+t)^2}{(1-t)^4}$.
Hence the two $\mathbb{C}$-algebras are not isomorphic as graded algebras and the two varieties cannot be isomorphic as $\mathbb{C}^\times$-varieties, contradicting our assumption.}
\end{rem}

\medbreak

We now turn to semisimple elements in $\V$. 

\begin{lemma}\label{lem:semisimple-elements} \ 
\begin{enumerate}
\setlength{\itemsep}{4pt}
\item An element of $\Vb$ is semisimple if and only if it lies in $\gl_n \subseteq \Vb$ and is a semisimple element therein.
\item An element of $\V$ is semisimple if and only if it is a semisimple element of $\bigwedge^2\k^{2n}$, if and only if it is a semisimple operator on $\k^{2n}$ via the inclusion $\bigwedge^2 \k^{2n} \subseteq \gl_{2n}$. Its $G$-orbit is  represented by a matrix ${\rm diag}(D, D)$ for some diagonal matrix $D\in \gl_n$.
\end{enumerate}
\end{lemma}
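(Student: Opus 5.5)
We argue with the Hilbert--Mumford criterion together with the orbit-closure descriptions already available. \emph{Enhanced case.} Let $(v,x)\in\Vb$. First we show that $(v,x)$ closed forces $v=0$ and $x$ semisimple. The cocharacter $t\mapsto t\cdot\mathrm{Id}$ of the centre of $\G$ acts on $\Vb=\k^n\oplus\gl_n$ by $t\cdot(v,x)=(tv,x)$, since the centre acts trivially on $\gl_n$. Letting $t\to 0$ gives $(0,x)\in\overline{\G(v,x)}$. If the orbit is closed, then $(0,x)\in\G(v,x)$, so $v=g\cdot 0=0$.

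Now take $(0,x)$. Its $\G$-orbit is the adjoint orbit of $x$, and the closed adjoint orbits in $\gl_n$ are exactly the semisimple classes (classical; e.g.\ the closure of the class of $x$ contains that of $x_s$, obtained by a cocharacter contracting $x_n$ inside $\GL_n^{x_s}$). Conversely, if $x$ is semisimple then $\G(0,x)=\{0\}\times\G x$ is closed in $\{0\}\times\gl_n$, hence closed in $\Vb$. This proves (1).

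\emph{Exotic case.} The same trick works, using $-\mathrm{Id}\in G$ and a one-parameter subgroup. The cleanest choice is the following. For $(v,x)\in\V$ with $x=x_s+x_n$ (Jordan decomposition in $\gl_{2n}$), the centraliser $G^{x_s}$ is a product $\prod\Sp_{2k_i}$ or $\GL$ factors acting on the eigenspaces. I would use the $\G$-reduction in the opposite direction: by Lemma~\ref{lem:poset-nil}-type arguments, or directly, there is a torus $\{\mathrm{diag}(t\,\mathrm{Id}_n,t^{-1}\mathrm{Id}_n)\}$ in $G$. This torus acts on $T^*V$ with weights $\pm1$ and on $\bigwedge^2$ with weights $2,0,-2$. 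That alone does not kill $v$.

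Instead I argue invariant-theoretically. The closed orbit in $\overline{G(v,x)}$ is unique, and the invariants $\k[\V]^G$ are known from \cite{springer}: they are generated by the coefficients of the characteristic polynomial (Pfaffian-type) of $x$ together with $\omega(v, x^k v)$. These latter vanish identically by skew-adjointness, since $x$ is self-adjoint and so $\omega(v,x^kv)=\omega(x^kv,v)=-\omega(v,x^kv)$. Hence $(v,x)$ and $(0,x_s)$ have the same invariants. So the unique closed orbit in the fibre is that of a semisimple $(0,x_s)$, provided we show $G(0,x_s)$ is closed.

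For that, diagonalise. A self-adjoint semisimple $x_s$ has eigenspaces that are symplectic and mutually orthogonal, each of even dimension. Choosing a Darboux basis in each eigenspace puts $x_s$ in the form ${\rm diag}(D,D)$. The orbit $G x_s$ is then the intersection of $\bigwedge^2\k^{2n}$ with the $\GL_{2n}$-class of $x_s$, since two self-adjoint semisimple operators with the same eigenvalue multiplicities are $G$-conjugate by the same Darboux argument. It is therefore closed.

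Finally, if $(v,x)$ has closed orbit, it must equal this unique closed orbit, so $v=0$ and $x$ is semisimple. Conversely, semisimple $x$ gives a closed orbit.

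The step I expect to be the main obstacle is producing the degeneration $(v,x)\rightsquigarrow(0,x_s)$ in the exotic case without relying on the full invariant description. The fallback is explicit: inside $G^{x_s}$, choose a cocharacter contracting both $v$ and $x_n$. This uses the result of \cite{springer} that $(v,x_n)$ is nilpotent for the exotic module of $G^{x_s}$, a product of smaller exotic modules, together with the nilpotent cone description $\cN(\V)=\k^{2n}\times\cN(\bigwedge^2\k^{2n})$.
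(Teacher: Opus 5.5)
Your argument is correct. Part (1) is the paper's argument: the central cocharacter forces $v=0$, and closed adjoint orbits in $\gl_n$ are the semisimple ones.

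For part (2) you take a genuinely different route. The paper first invokes \cite[Lemma 1.6]{springer} to move $(v,x)$ into the subspace $\mathbb B$, where $v\in V$ and $x=\left(\begin{smallmatrix}A&B\\0&{}^t\!A\end{smallmatrix}\right)$. On $\mathbb B$, the very torus $\lambda\mapsto{\rm diag}(\lambda\Id_n,\lambda^{-1}\Id_n)$ that you discarded does contract both $v$ and $B$. So a closed orbit forces $v=0$ and meets ${\rm diag}(A,{}^t\!A)$, and \cite[Lemma 1.4]{springer} (closed orbits in $\bigwedge^2\k^{2n}$ are exactly the semisimple operators) finishes.

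You argue globally instead. Since $\k[\V]^G=\k[\bigwedge^2\k^{2n}]^G$ is generated by Pfaffian characteristic-polynomial coefficients, $(v,x)$ and $(0,x_s)$ lie in the same fibre of $\pi_{\V,G}$. Uniqueness of the closed orbit in each fibre then reduces everything to showing that $G(0,x_s)$ is closed. You prove this by hand: the eigenspaces are orthogonal, you choose Darboux bases, and you obtain $Gx_s=\bigwedge^2\k^{2n}\cap\GL_{2n}x_s$.

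What your route buys: you reprove Springer's Lemma 1.4 rather than cite it. You also get the normal form ${\rm diag}(D,D)$ and the coincidence of $G$- and $\GL_{2n}$-conjugacy for semisimple elements (Remark~\ref{rem:normalisers}(2)) at no extra cost. What it costs: you lean on the full description of the invariant ring, in particular that no invariant involves $v$. You also need the GIT fact that each fibre has a unique closed orbit, which requires geometric reductivity in positive characteristic. The paper's degeneration, by contrast, is explicit.

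One small point you should add: $(0,x_s)$ really lies in $\V$. This holds because $x_s$ is a polynomial in $x$, and polynomials in an $\omega$-self-adjoint operator are again self-adjoint.

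Your fallback is also valid and would give a third route. There you take a cocharacter of $G^{x_s}$ contracting $(v,x_n)$, using Hilbert--Mumford together with Springer's description of the nilcone of $\V_{G^{x_s}}$.
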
 
\begin{proof}
 (1) Let $d \colon \k^\times \to \G$ denote the  cocharacter $\lambda \mapsto \lambda \Id_n$. Then  $d(\lambda)(v,x)=(\lambda v, x)$ for all $\lambda \in \k^\times$ and $(v, x) \in \Vb$. Therefore, if  
 $(v,x) \in \Vb$ has a closed $\G$-orbit,  $(0, x) \in \G(v,x)$, hence $v=0$.  It is well-known that the semisimple elements of $\gl_n$ are those admitting closed $\G$-orbit.

(2) Now consider the pair $(\V, G)$ and let $\mathbb B$ be the sum of $\k^n=V\subseteq T^*V$ with the subspace of  $\bigwedge^2\k^{2n}\subseteq \gl_{2n}$ consisting of matrices of the form $\left(\begin{smallmatrix}A&B\\0&^t\!A\end{smallmatrix}\right)$ with $A$ upper triangular and $B$ skew-symmetric. By \cite[Lemma 1.6]{springer}  every $G$-orbit meets $\mathbb B$. 
Assume that an element $(v,x)\in \mathbb B$ has a closed orbit.  Let  $x=\left(\begin{smallmatrix}A&B\\0&^t\!A\end{smallmatrix}\right)$ and  let $d' \colon \k^\times \to \G$ denote the  cocharacter $\lambda \mapsto {\rm diag}(\lambda \Id_n,\lambda^{-1}\Id_n)$. Then $d'(\lambda)(v,x)=\left(\lambda v,\left(\begin{smallmatrix}A&\lambda^2B\\0&^t\!A\end{smallmatrix}\right)\right)$, so $v=0$ and $B=0$. Hence $G(v,x)\in \bigwedge^2\k^{2n}$ and  \cite[Lemma 1.4]{springer} gives semisimplicity of $x\in\gl_{2n}$, whence of $A$ in $\gl_n$. Then $A$ is $\G$-conjugate to a diagonal matrix $D$, whence $x={\rm diag}(A, \,^t\!A)$ is $G$-conjugate to a diagonal matrix ${\rm diag}(D, \,D)$.
\end{proof}

\medbreak

Stabilisers of semisimple elements in $\Vb$ are thus Levi subgroups of $\G$.   Semisimple $G$-orbits in  $\bigwedge^2\k^{2n}$ are represented by diagonal matrices of the form 
\begin{equation}
\label{eq:ss-exotic}
{\rm diag}(a_1\Id_{\lb_1},\cdots,a_l\Id_{\lb_l},a_1\Id_{\lb_1},\cdots,a_l\Id_{\lb_l})
\end{equation}
where $a_i\in\k$ for all $i$ and $\lb=(\lb_1,\,\ldots,\,\lb_l)$ is a partition of $n$.
Stabilisers of semisimple elements in $\bigwedge^2\k^{2n}$ were computed in \cite{springer} and they are isomorphic to direct products of symplectic groups. More precisely, if $x$ is as in \eqref{eq:ss-exotic}
with $a_i\neq a_j$ for $1\leq i\neq j\leq l$, then $G^x\simeq G_{\lb}$ is the subgroup of $G$ consisting of matrices of the  form \begin{equation}\label{eq:centra}\left(\begin{smallmatrix}A_1&&&B_1&&\\
&\ddots&&&\ddots&\\
&&A_l&&&B_l\\
C_1&&&D_1&&\\
&\ddots&&&\ddots&\\
&&C_l&&&D_l\end{smallmatrix}\right)\end{equation} where $A_i,\,B_i, C_i$ and $D_i$ are $\lb_i\times \lb_i$ matrices,  and $\left(\begin{smallmatrix}A_i&B_i\\
C_i&D_i\end{smallmatrix}\right)\in \Sp_{2\lb_i}$ for any $i=1,\,\ldots,\,l$.  

\begin{remark}\label{rem:normalisers} \begin{enumerate}
\item Let $\mathbf x_s={\rm diag}(a_1\Id_{\lb_1},\cdots,a_l\Id_{\lb_l})\in \Vb$ with $a_i\neq a_j$ and let  $x_s=\varphi(\mathbf x_s)$. Any element in $N_{G}(G^{x_s})=N_{G}(\Sp_{2\lb})$ is obtained by composing an element of $G^{x_s}=\Sp_{2\lb}$ with an element of $N_G(T)$ whose image in the Weyl group $N_G(T)/T$ lies in the maximal standard parabolic subgroup  of $W$ of type ${\sf A}_n$. In other words, it corresponds to an element in the normaliser of the standard parabolic subgroup $\mathbb S_{\lb}$ of $\mathbb S_n$ associated with the partition $\lb$.

Let  $d_i(\lb):=\left|\{j\in\{1,\,\ldots,\,l\}, ~|~\lb_j=i\}\right|$, then   
 \begin{equation}\label{eq:normalisers}N_{G}(G^{x_s})/G^{x_s}\simeq \prod_{i=1}^n\mathbb S_{d_i(\lb)}\simeq N_{\mathbb S_n}(\mathbb S_\lb)/\mathbb S_\lb\simeq N_{\G}(\G^{\mathbf x_s})/\G^{\mathbf x_s}\end{equation}
where the composite  isomorphism $N_{\G}(\G^{\mathbf x_s})/\G^{\mathbf x_s}\simeq N_{G}(G^{x_s})/G^{x_s}$ is induced by the natural inclusion $\G\into G$. 
\item Two semisimple elements $x,\,x'\in\bigwedge^2\k^{2n}\subseteq\gl_{2n}$ are $\GL_{2n}$-conjugate if and only if they are $G$-conjugate. Indeed, by Lemma \ref{lem:semisimple-elements}(2) we may  assume that $x={\rm diag}(D,D)$ and $x'={\rm diag}(D',D')$ for some  diagonal matrices $D,\,D'\in\gl_n$. But then  $\GL_{2n}x_s=\GL_{2n}x'_s$ if and only if  $\G D=\G D'$, and the embedding $\G\into G$ gives  $G x_s=Gx_s'$.
\end{enumerate}
\end{remark}

\subsection{The Jordan--Kac--Vinberg decomposition}
\label{ss:JKVdecomposition}

Let $W$ be a rational module of a reductive group $H$. In Section~\ref{ss:nilpandss} we explained what it means for an element of $W$ to be semisimple or nilpotent.
Following \cite{GV} we say that $W$ admits a Jordan--Kac--Vinberg (JKV for short) decomposition if every element $w$ can be decomposed into a sum $w = w_s + w_n$ such that:
\begin{itemize}
\item $w_s$ is semisimple;
\item $w_n$ is nilpotent viewing $W$ as a $H^{w_s}$-module;
\item $H^w \subseteq H^{w_s}$.
\end{itemize}
Existence of a JKV decomposition for any representation $W$ is proved in \cite[Appendix]{Kac} by Kac using Luna's fundamental Lemma under the assumption that $\k$ has characteristic zero. In positive characteristics one may repeat the argument invoking \cite{BR}. When $W = \Lie(H)$ is the adjoint representation of a reductive group, 
these notions coincide with the usual abstract Jordan--Chevalley decomposition, and there is a unique such decomposition. When we write $y=y_s+y_n$ for some $y\in \gl_n$ or $\gl_{2n}$ we will always mean that $y_s$ and $y_n$ are, respectively, the semisimple and the nilpotent parts of $y$ in the Jordan-Chevalley decomposition. 

\smallskip

The Jordan decomposition in $\gl_n$ induces a JKV decomposition on $\Vb$ that is compatible with the projection onto $\gl_n$ as we now show.  Let $(v,x)\in \Vb$, and let $x=x_s+x_n$. Then $(v,x)=(0, x_s)+(v, x_n) $ is a JKV decomposition of $(v,x)$ in $\Vb$. Indeed, $(0, x_s)$ is semisimple. Its stabiliser is a Levi subgroup $L$ of $\G$, with Lie algebra $\l$. Then $L$ is a product of groups isomorphic to $\GL_{\lb_i}$ where $\lb$ is a partition of $n$ and $\k^n$ decomposes as an $L$-module as a direct sum of subspaces isomorphic to $\k^{\lb_1},\,\ldots,\,\k^{\lb_l}$ where the component $\GL_{\lb_i}$ acts on $\k^{\lb_i}$ via the standard representation and trivially on the other summands. Hence $\Vb_L:=\k^n\oplus \mathfrak l$ is the external direct sum of enhanced modules for the components of $L$. Since $x_n\in \mathcal N(\mathfrak l)$ we have $(v, x_n)\in \mathcal N(\Vb_L)$ by  \cite[Corollary~2.3]{springer}. Finally, $\G^{(v,x)}=\G^v\cap\G^x=\G^v\cap\G^{x_s}\cap\G^{x_n}\subset \G^{x_s}$. 

\smallskip

Similarly, the Jordan decomposition in $\gl_{2n}$ induces a JKV decomposition on $\V$ that is compatible with the morphism $\V\onto \bigwedge^{2}\k^{2n}\into\gl_{2n}$, as we now explain. Let $(v,x)\in \V$, so $x\in \gl_{2n}$ and let $x=x_s+x_n$.  We claim that $(v,x)=( 0, x_s)+(v, x_n) $ is a JKV decomposition of $(v,x)$ in $\V$.  First of all, by construction the endomorphisms $x_s,\,x_n\in\gl_{2n}$ are polynomials in $x$, and  powers of self-adjoint endomorphisms are again self-adjoint, so $x_s$ and $x_n$ lie in $\bigwedge^{2}\k^{2n}$.  Then,  $(0,x_s)$ is semisimple by \cite[Lemma 1.4]{springer} and $G^{x_s}\simeq \Sp_{2\lb}$ for some partition $\lb$ of $n$. In addition, $\k^{2n}$ decomposes as a $\Sp_{2\lb}$-module as a direct sum of subspaces isomorphic to $\k^{2\lb_1},\,\ldots,\,\k^{2\lb_l}$ where the component $\Sp_{2\lb_i}$ acts on $\k^{2\lb_i}$ via the standard representation and trivially on the other summands. 
Observe that $x_n\in \gl_{2n}^{x_s}\cap\bigwedge^2\k^{2n}$.  Up to $G$-action  $\gl_{2n}^{x_s}$ consists of matrices of the form
 \begin{equation}
 \label{eq:centraliser-double}
 \left(\begin{smallmatrix}A_1&&&B_1&&\\
&\ddots&&&\ddots&\\
&&A_l&&&B_l\\
C_1&&&D_1&&\\
&\ddots&&&\ddots&\\
&&C_l&&& D_l\end{smallmatrix}\right)
\end{equation}
where $A_i,\,B_i, C_i,\,D_i\in \gl_{\lb_i}$ for any $i=1,\,\ldots,\,l$, so 
$\gl_{2n}^{x_s}\cap\bigwedge^2\k^{2n}$ consists of matrices of the form
 \begin{equation}\label{eq:centraliser-exotic}\left(\begin{smallmatrix}A_1&&&B_1&&\\
&\ddots&&&\ddots&\\
&&A_l&&&B_l\\
C_1&&&^{t}\!A_1&&\\
&\ddots&&&\ddots&\\
&&C_l&&& ^{t}\!A_l\end{smallmatrix}\right)\end{equation} where $A_i,\,B_i$ and $C_i$ lie in $\gl_{\lb_i}$ and $B_i,\,C_i$ are skew-symmetric, for any $i=1,\,\ldots,\,l$.

Thus the $\Sp_{2\lb}$-submodule $\k^n\oplus (\gl_{2n}^{x_s}\cap\bigwedge^2\k^{2n})$ is the external direct sum $\V_{\lb}$ of  exotic modules for the components of $\Sp_{2\lb}$ and  $(v,x_n)\in \mathcal N(\V_{\lb})$, which is contained in the $G^{x_s}$-nilcone of $\V$. Finally, $G^{(v,x)}=G^v\cap G^x=G^v\cap G^{x_s}\cap G^{x_n}\subset G^{x_s}$.

\begin{remark}
\label{r:JKVnonunique}
The JKV  decomposition is not unique in general, see \cite[Appendix]{Kac}. It is pertinent to ask if it is unique in the enhanced or the exotic module, but it is not the case. Let  $n=2$, and  let $v=\left(\begin{smallmatrix}1\\1\end{smallmatrix}\right)$ and  $x=\left(\begin{smallmatrix}a&1\\0&b\end{smallmatrix}\right)$ with $a\neq b$. Then $x$ is semisimple and thus  $(v,x)=(0,x)+(v,0)$ is a JKV decomposition. However, we can also consider  the semisimple element $s=\left(\begin{smallmatrix}a&0\\0&b\end{smallmatrix}\right)$. Let  $\gamma\colon \k^\times\to \G^s$ be the cocharacter  given by $t\mapsto \left(\begin{smallmatrix}t^2&0\\0&t\end{smallmatrix}\right)$. Then $\gamma(t)(v,x-s)=\left( \left(\begin{smallmatrix}t^2\\t\end{smallmatrix}\right), \left(\begin{smallmatrix}0&t\\0&0\end{smallmatrix}\right)\right)$, so $0\in \overline{\G^s(v,x-s)}$. Finally, $\G^{(v,x)}=1\subseteq \G^{s}$. So, $(v,x)=(0,s)+(v,x-s)$ is also a JKV decomposition. A similar construction can be generalised to any $n$, and also applied to $\V$. 
\end{remark}

\medskip

The decompositions we considered are the unique ones that are compatible with the natural morphisms onto $\gl_n$ and $\k^n$, respectively to $\gl_{2n}$ and $\k^{2n}$. Observe that these Jordan decompositions are also compatible with the embeddings  \eqref{eq:inclusions}.
From now on we adopt this decomposition of elements in $\Vb$ and $\V$ as  JKV decomposition and whenever we write $(v,x)=(v,x_n)+(0,x_s)$ for an element in $\Vb$ or $\V$ we will always be referring to that. 

\medskip

The following result generalises Lemma~\ref{lem:poset-nil} to the orbit sets $\Vb/\G$,  $\V/G$ and $\Vb_{2n}/\GL_{2n}$, each of which is partially ordered by inclusion of closures, as usual. 

\begin{theorem}
\label{prop:double-dim}
\begin{enumerate}
\setlength{\itemsep}{4pt}
\item The morphism $\varphi$ is surjective with generic fibre of dimension $n$.  In particular,  every $G$-orbit in $\V$ meets $\Vb$.  
\item The morphism $\varphi$ induces an isomorphism of the (infinite) posets  $\Vb/\G$  and  $\V/G$.
\item For any $(v,x)\in \Vb$ we have $\dim G(v,x)=2\dim \G (v,x)$.  
 \item For any $(v,x)\in \V$ we have  $\dim \GL_{2n}(v, x)=2\dim G(v, x)$. 
 \item For any $X\subset \Vb$ and any  $Y\subset \V$ we have 
 \begin{equation}
 \label{eq:regularaftersaturation1}
 \varphi(1*X)^{\Greg}=\varphi(1*X^{\Gbreg}),\quad\quad\varphi(G\times^\G X)^{\Greg}=\varphi(G \times^\G X^{\Gbreg})
 \end{equation}
 and
 \begin{equation}
\label{eq:regularaftersaturation2} 
 \psi(1*Y)^{\GL_{2n}\operatorname{-reg}}=\psi(1*Y^{\Greg})\quad\quad\psi(\GL_{2n} \times^G Y)^{\GL_{2n}\operatorname{-reg}}=\psi(\GL_{2n} \times^G \ Y^{\Greg})
 \end{equation}
\item The morphism $\psi$ induces an injective poset morphism. 
 \end{enumerate}
\end{theorem}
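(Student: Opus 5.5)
The strategy is to reduce everything to the nilpotent case, Lemma~\ref{lem:poset-nil}, by means of the JKV decompositions of Section~\ref{ss:JKVdecomposition}.

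\emph{Step 1: surjectivity and the orbit bijection.} Take $(v,x)\in\V$ with JKV decomposition $(v,x)=(0,x_s)+(v,x_n)$. By Lemma~\ref{lem:semisimple-elements}(2) we may conjugate by $G$ so that $x_s=\varphi(\mathbf x_s)$ with $\mathbf x_s={\rm diag}(a_1\Id_{\lb_1},\dots,a_l\Id_{\lb_l})$ and the $a_i$ distinct. Then $G^{x_s}=\Sp_{2\lb}$, and $(v,x_n)$ lies in $\mathcal N(\V_\lb)=\prod_i\mathcal N(\V_{\lb_i})$. We apply Lemma~\ref{lem:poset-nil}(1) to each factor $\Sp_{2\lb_i}\supset\GL_{\lb_i}$. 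This lets us move $(v,x_n)$ by $G^{x_s}$ into $\Vb_\lb=\k^n\oplus\mathfrak l\subseteq\Vb$, where $\mathfrak l=\gl_n^{\mathbf x_s}$, without moving $x_s$. Hence every $G$-orbit meets $\Vb$.

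For injectivity on orbits, suppose $g(v,x)=(v',x')$ with $(v,x),(v',x')\in\Vb$ and $g\in G$. Uniqueness of Jordan--Chevalley decomposition in $\gl_{2n}$ gives $gx_s=x'_s$. By Remark~\ref{rem:normalisers}(2) and its proof, the $\G$-orbits of the semisimple parts agree, so we may assume $x_s=x'_s$ and $g\in N_G(G^{x_s})$. Using \eqref{eq:normalisers} we can then correct $g$ by an element of $N_\G(\G^{\mathbf x_s})$, reducing to $g\in G^{x_s}$. Applying the factorwise bijection of Lemma~\ref{lem:poset-nil}(1) then shows that the nilpotent parts are $\G^{\mathbf x_s}$-conjugate.

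\emph{Step 2: dimensions and fibres.} The same reduction gives (3). We have $\dim G(v,x)=\dim G/G^{x_s}+\dim G^{x_s}(v,x_n)$, and similarly for $\G$. One checks $\dim G-\dim\Sp_{2\lb}=2(\dim\G-\dim\GL_\lb)$, since both sides equal $2\sum_{i<j}2\lb_i\lb_j$. Combining this with the doubling in Lemma~\ref{lem:poset-nil}(1) on each factor gives (3). Item (4) follows identically from Lemma~\ref{lem:poset-nil}(2), using $\GL_{2n}^{x_s}=\GL_{2\lb}$ and $\dim\GL_{2n}-\dim\GL_{2\lb}=2(\dim G-\dim\Sp_{2\lb})$.

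For the fibre dimension in (1), we compute $\dim G\times^\G\Vb-\dim\V=(\dim G-\dim\G)+(n+n^2)-(2n+n(2n-1))=n$. Since $\varphi$ is surjective, the generic fibre has dimension $n$.

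Item (5) follows from (3) and (4). For instance, the $G$-orbit dimension on $\varphi(1*X)$ is twice the $\G$-orbit dimension, so the maximal-dimension loci correspond. The saturated versions follow because orbit dimension is $G$-invariant.

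\emph{Step 3: poset structure.} For (2) we must show that $\overline{\G y}\supseteq\G y'$ if and only if $\overline{Gy}\supseteq Gy'$. The forward direction is immediate.

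The converse is the main obstacle. My plan is to use invariant theory. By Step 1, $\V/\!/G$ and $\Vb/\!/\G$ are both determined by the semisimple parts, and the inclusion $\Vb\into\V$ induces a bijection on closed orbits. Hence closure relations can only hold between elements whose semisimple parts have the same image in the quotient. Inside a single fibre, I would reduce to $\mathcal N(\V_{\lb})$ via the Luna slice at $(0,x_s)$, that is, the étale map $G\times^{G^{x_s}}(x_s+\V_{\lb})\to\V$ near $x_s$. We have an explicit complement: the $G^{x_s}$-stable complement to $\V_\lb$ in $\V$ meets $\Vb$ in the corresponding complement for $\Vb_\lb$, and $\mathrm{ad}(x_s)$ is invertible on it. This lets the slice be described concretely, and via \cite{BR} it works in positive characteristic. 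The slice translates $G$-closure relations into $G^{x_s}$-closure relations among nilpotents, where Lemma~\ref{lem:poset-nil}(1) applies. An identical slice argument with $\GL_{2n}$ gives (6).
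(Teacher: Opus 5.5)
Your proposal is correct. For (1), (3), (4), (5) and the injectivity half of (2) it is the paper's argument. You conjugate $x_s$ into the form \eqref{eq:ss-exotic}, use $G^{(v,x)}\subseteq G^{x_s}$ to split orbit dimensions, and apply Lemma~\ref{lem:poset-nil} factor by factor to $\Sp_{2\lb}\supseteq\GL_\lb$.

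Two small remarks on these parts:
\begin{itemize}
\item Once you have arranged $x_s=x'_s$, the relation $gx_s=x'_s$ already gives $g\in G^{x_s}$, so the correction via \eqref{eq:normalisers} is unnecessary.
\item $\operatorname{ad}(x_s)$ interchanges $\sp_{2n}$ and $\bigwedge^2\k^{2n}$, so it is not invertible on the complement of $\V_\lb$. The statement you actually need for the slice is $[\sp_{2n},x_s]\oplus(\bigwedge^2\k^{2n}\cap\gl_{2n}^{x_s})=\bigwedge^2\k^{2n}$, which the paper verifies in its smoothness proof for exotic classes.
\end{itemize}

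Where you genuinely diverge is the order-reflecting direction of (2).

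The paper's route is as follows. It pushes everything into $\Vb_{2n}$ via $\psi$ and reads off $\GL_{2n}$-conjugacy of the semisimple parts from the adjoint quotient of $\gl_{2n}$. It then normalises $x_s=x'_s$. Finally it states, without further justification, the inclusion $\overline{G\varphi(v',x_s+x'_n)}\cap(\varphi(0,x_s)+\varphi(\mathcal N(\Vb_\lb)))\subseteq\varphi(0,x_s)+\overline{G_\lb\varphi(v',x'_n)}$.

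Your route is different in both steps:
\begin{itemize}
\item You match semisimple parts through the unique closed orbit $G(0,x_s)$ in a fibre of $\pi_{\V,G}$.
\item You then use the strongly \'etale slice $G\times^{G^{x_s}}\V_\lb\to\V$ from \cite[Theorem 6.2]{BR}. It identifies $\pi_{\V,G}^{-1}(\pi_{\V,G}(0,x_s))$ $G$-equivariantly with $G\times^{G^{x_s}}(x_s+\mathcal N(\V_\lb))$, so closure relations in that closed fibre become $G^{x_s}$-closure relations among nilpotents.
\end{itemize}
This is exactly a proof of the inclusion the paper takes for granted. It is also the natural tool here, because $G^{x_s}=\Sp_{2\lb}$ is not a Levi subgroup of $G$, so parabolic-bundle arguments are not directly available inside $\V$.

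What your route costs is verifying the Bardsley--Richardson hypotheses: \'etaleness at $1*(0,x_s)$ and separability of the orbit map. You also need the full strongly \'etale conclusion, i.e.\ the fibre isomorphism, not merely \'etaleness. The paper's route is shorter on the page but leaves this step implicit. The same comparison applies to (6), with $\GL_{2n}^{x_s}=\GL_{2\lb}$ in place of $\Sp_{2\lb}$.
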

\begin{proof}(1) Let $(v,x)=(0,x_s)+(v,x_n)\in\V$.  
Up to $G$-action,  we may assume that $x_s$ is of the form \eqref{eq:ss-exotic} and that $G^{x_s}=G_\lb$ for some partition $\lb=(\lb_1,\,\ldots,\,\lb_l)$ of $n$.  Since $x_n$ commutes with $x_s$ in $\gl_{2n}$, it has the form \eqref{eq:centraliser-exotic} and since it is also nilpotent,  the element  $(v,x_n)$ lies in the exotic nilpotent cone of $G_{\lb}$, which is a product of exotic nilcones.  By \cite[Theorem 6.1]{ah} applied to each factor $\Sp_{\lb_i}$ of $G_\lb$, the $\G_{\lb}$-orbit of  $(v,x_n)$ meets  the enhanced nilcone $\mathcal N(\Vb_\lb)$ of $\GL_{\lb}:=\G\cap G_\lb$.  As $\mathcal N(\Vb_\lb)\subset \Vb$, this gives surjectivity of $\varphi$.  The dimension of the generic fibre follows from a straightforward computation.

(2)  It follows from (1) that $\varphi$ induces an order preserving surjective map from $\Vb/\G$  to  $\V/G$: if $(v,x)\in \overline{\G(v',x')}$ then $(v,x)\in \overline{G(v',x')}$ so $\G(v,x)\subseteq\overline{\G(v',x')}$ implies $G(v,x)\subseteq\overline{G(v',x')}$.
  We now show that the induced map is injective. 
Assume $G(v,x)=G(v',x')$, for $(v,x),(v',x')\in \Vb$ and let $(v,x)=(0,x_s)+(v,x_n)$ and $(v',x')=(0,x'_s)+(v',x'_n)$. Lemma \ref{lem:semisimple-elements}(2) gives $x_s,\,x_s'\in\bigwedge^2\k^{2n}$. As $Gx=Gx'$, it follows that $\GL_{2n}x=\GL_{2n}x'$   whence $\GL_{2n}x_s=\GL_{2n}x'_s$. By Remark \ref{rem:normalisers}  we may thus assume $x_s=x_s'$.
Then, $(v,x_s+x_n)$ and $(v',x_s+x_n')$ are $G^{x_s}$-conjugate and  $(v,x_n)$ and $(v',x_n')$ lie in  the enhanced nilcone for $\G^{x_s}$. Thus, they are $\G^{x_s}$-conjugate thanks to Lemma~ \ref{lem:poset-nil}, giving injectivity of the induced map.  

We now show that the map is order reflecting. It is more convenient to differentiate between elements in $\Vb$ and their images in $\V$. Recall that we write $\varphi(v,x)$ for $\varphi(1*(v,x))$ and similarly for $\psi$. 
Let $(v,x)=(v,x_s+x_n)$ and $(v',x') = (v',x_s'+x_n')\in \Vb$ and assume $\varphi(v,x)\in\overline{G\varphi(v',x')}$. Then, $\psi(\varphi(v,x))\in\overline{\GL_{2n}\psi\varphi(v',x')}$, so $\psi\varphi(x_s)$ and $\psi\varphi(x'_s)$ are $\GL_{2n}$-conjugate and  $\varphi(x_s), \varphi(x'_s)\in\bigwedge\!\!^2\k^{2n}$. By Remark \ref{rem:normalisers} (2),  we may assume that $x_s=x_s'$
 and that $\varphi(0,x_s)$ is of the form \eqref{eq:ss-exotic}. Then 
\begin{align*}
\varphi(v,x_s+x_n)&\in \overline{G\varphi(v',x_s+x_n')}\cap \left(\varphi(0,x_s)+\varphi(\mathcal N(\Vb_\lb)\right)\\
&\subseteq \varphi(0,x_s)+\left(\overline{G_\lb\varphi(v',x_n')}\cap \varphi(\mathcal N(\Vb_\lb))\right).
\end{align*}
Lemma~\ref{lem:poset-nil} (1) applied to $\Vb_\lb$ and $\V_\lb$ gives 
\begin{equation*}\varphi(v,x)\in \varphi(0,x_s)+\overline{\varphi(\G_\lb(v',x_n'))}=\varphi\left(\overline{\G_{\lb}(v',x')}\right),\end{equation*}
whence $(v,x)\in \overline{\G_{\lb}(v',x')}\subset\overline{\G(v',x')}$. 

(3) First of all for $x_s$ as in \eqref{eq:ss-exotic} \[\dim Gx_s=2n^2+n-\sum_{i=1}^l(2\lb_i^2+\lb_i)=2\dim \G x_s.\]
In addition, $\G^{x_s}\simeq\prod_{i=1}^l\GL_{\lb_i}\leq \prod_{i=1}^l\Sp_{2\lb_i}\simeq G^{x_s}$ with compatible inclusions of enhanced and exotic nilcones. Combining with 
 Lemma \ref{lem:poset-nil} gives
\begin{align*}
\dim G(v, x)&=\dim G{x_s}+\dim G^{x_s}(v,x_n)=2\dim \G x_s+2\dim \G^{x_s}(v,x_n)=2\dim \G(v, x).
\end{align*}
\medskip
(4)  Let $(v,x)\in\Vb$ be as above.  Then,  $\GL_{2n}^{x_s}$ consists of invertible matrices of the form \eqref{eq:centraliser-double}, and so $\GL_{2n}^{x_s}\simeq\prod_{i=1}^l\GL_{2\lb_i}$.  
Hence,
\begin{align}\label{eq:aux-ss}\dim \GL_{2n}x_s&=4n^2-\sum_{i=1}^l(2\lb_i)^2=2(2n^2-n+\sum_{i=1}^l(2\lb_i^2-\lb_i))\\
\nonumber &=2(\dim G-\dim G^{x_s})=2\dim Gx_s\end{align}
Now $(v, x_n)$ lives in the exotic  nilcone of $\Sp_\lb$ which is a product of exotic nilcones,  we denote by $(v(i), x_n(i))$ the $i$-th component.  
Then,  making use of \eqref{eq:aux-ss} and Lemma \ref{lem:poset-nil} we have:
\begin{align*}\dim \GL_{2n}(v,x)&=\dim \GL_{2n}x_s+\dim \GL_{2n}^{x_s}(v,x_n)\\
&=2\dim G x_s+\sum_{i=1}^l\dim \GL_{2\lb_i}(v(i),x_n(i))\\
&=2\dim Gx_s+2\sum_{i=1}^l\dim \Sp_{2\lb_i}(v(i),x(i))\\
&=2\left(\dim Gx_s+\dim G^{x_s}(v,x_n)\right)=2\dim G(v,x).\end{align*}

(5) The two equalities in \eqref{eq:regularaftersaturation1} are equivalent, as are the two equalities in \eqref{eq:regularaftersaturation2}. All four equalities readily follows from the dimension formulas (3) and (4). 

(6) We first show injectivity. 
 Let $(v,x_s+x_n),(v', x'_s+x_n')\in \V$ be such that $\psi(v', x_s'+x_n)\in \GL_{2n}\psi(v,x_s+x_n)$. 
 Since  $\psi(0,x_s)\in \GL_{2n}\psi(0,x_s')$,  combining Lemma \ref{lem:semisimple-elements}(2) and Remark \ref{rem:normalisers} we may assume that $x_s'=x_s$.   
  Whence,   $(v,x_n)$ and $(v', x_n')$ lie in the exotic nilcone of $G^{x_s}$ and their images in $\Vb_{2n}$  are $\GL_{2n}^{x_s}$-conjugate. We conclude applying Lemma \ref{lem:poset-nil}. Preservation of closures of inclusions is proved as in  (2).
\end{proof}

We consider also the natural projections 
\begin{align}\label{eq:projections}p\colon \V\to \bigwedge{}\!\!^2 \k^{2n},&&p'\colon \Vb\to \gl_n.\end{align} By general principles both morphisms are open. As we now show, they are also well-behaved with respect to closed invariant subsets.

\begin{cor}\label{lem:closure-proj}
If $X$ is a $G$-stable subset of $\V$ then $p(\overline{X})=\overline{p(X)}$. Similarly, if $Y$ is a $\G$-stable subset of $\Vb$ then $p'(\overline{Y}) = \overline{p'(Y)}$.
\end{cor}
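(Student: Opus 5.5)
The inclusion $p(\overline{X})\subseteq\overline{p(X)}$ is just continuity of $p$, so the content of the statement is the reverse inclusion. I will prove it first for the enhanced projection $p'$ and then deduce the exotic case from Theorem~\ref{prop:double-dim}(1). The guiding idea is that $p'$ has a section $x\mapsto(0,x)$. This section lands in the closure of each fibre orbit, thanks to the central cocharacter $d(\lambda)=\lambda\Id_n$ already used in the proof of Lemma~\ref{lem:semisimple-elements}(1).

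\emph{Enhanced case.} Let $Y\subseteq\Vb$ be $\G$-stable and let $x\in\overline{p'(Y)}$. Since $d(\lambda)(v,y)=(\lambda v,y)$ and $Y$ is $\G$-stable, for every $(v,y)\in Y$ the whole set $\{(\lambda v,y)\mid\lambda\in\k^\times\}$ lies in $Y$. Letting $\lambda\to0$ gives $(0,y)\in\overline{Y}$. Hence the section $s\colon\gl_n\to\Vb$, $y\mapsto(0,y)$, satisfies $s(p'(Y))\subseteq\overline{Y}$. Since $s$ is continuous and $\overline{Y}$ is closed, we get $s(\overline{p'(Y)})\subseteq\overline{Y}$. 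Thus $(0,x)\in\overline{Y}$ and $x=p'(0,x)\in p'(\overline{Y})$.

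\emph{Exotic case.} There is no central cocharacter of $G$ scaling $v$ alone, so I will reduce to $\Vb$. Let $X\subseteq\V$ be $G$-stable and set $Y:=X\cap\Vb$, which is $\G$-stable. By Theorem~\ref{prop:double-dim}(1) every $G$-orbit meets $\Vb$, so $X=G\,Y$ and $p(X)=G\,p(Y)$. For $(v,x)\in Y$ the same cocharacter $d(\lambda)$, viewed inside $G$ via $\G\subseteq G$, acts on $\varphi(v,x)$ as $(\lambda v,x)$, because $\lambda\Id_n$ acts as ${\rm diag}(\lambda,\lambda^{-1})$ on $T^*V$ and trivially on the embedded $\gl_n$. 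Therefore $(0,x)\in\overline{X}$ for all $(v,x)\in Y$. Now let $s\colon\bigwedge^2\k^{2n}\to\V$ be the section $y\mapsto(0,y)$. For $g\in G$ and $(v,x)\in Y$ we have $s(g\,p(v,x))=g\,(0,x)\in\overline{X}$, using the $G$-stability of $\overline{X}$. So $s(p(X))\subseteq\overline{X}$, and by continuity $s(\overline{p(X)})\subseteq\overline{X}$. Applying $p$ gives $\overline{p(X)}\subseteq p(\overline{X})$.

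\emph{Main point to check.} The only step needing care is that $(0,y)\in\overline{X}$ for every $(w,y)\in X$, not merely for points of $\Vb$. This follows from $G$-equivariance of $s$ combined with $X=G\,Y$, as arranged above. Alternatively, one can observe that $(w,y)\mapsto(\lambda w,y)$ commutes with the linear $G$-action and so preserves $G$-orbit closures; but since that map need not come from $G$, I will rely on Theorem~\ref{prop:double-dim}(1) instead.
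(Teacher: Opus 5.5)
Your proof is correct and follows essentially the same route as the paper. Both use the central cocharacter $\lambda\mapsto\lambda\Id_n$ of $\G$ to put $(0,x)$ in the closure, and Theorem~\ref{prop:double-dim}(1) to reduce the exotic case to points of $\Vb$. The only difference is cosmetic: the paper shows directly that $p(C)=p(C\cap(0,\bigwedge^2\k^{2n}))$ is closed for every closed stable $C$, which is equivalent to your observation that $s^{-1}(\overline{X})$ is closed and contains $p(X)$.
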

\begin{proof}
By continuity of the projections $p(\overline{X})\subseteq  \overline{p(X)}$ and $p'(\ov Y) \subseteq \ov{p'(Y)}$. Since $p(X) \subseteq p(\ov X)$ and $p'(Y) \subseteq p'(\ov Y)$ we can complete the proof by showing that $p(\ov X)$ and $p'(\ov Y)$ are closed.

Now let $d \colon \k^\times \to \G$ denote the  cocharacter $\lambda \mapsto \lambda \Id_n$.
 Then $d(\lambda)  (v,x)=(\lambda v, x)$ for all $\lambda \in \k^\times$ and $(v, x) \in \Vb$. If $C \subseteq \Vb$ is $\G$-stable and closed then $(0, x) \in C$ for all $(v, x) \in C$. It follows that $p'(C) = p'(C \cap (0,\gl_n))$, which is closed because the restriction of $p'$ to $(0,\gl_n)$ is trivially an isomorphism. Setting $C=\overline{X}$ completes the proof for $p'$.

If $C $ is a closed $G$-invariant subset of $\V$ then for $(v,x) \in C$ there is some $G$-conjugate in $\Vb$, by Theorem~\ref{prop:double-dim}(1). By the previous paragraph we see that $(0,x) \in C$, and so $p(C) = p(C \cap (0,\bigwedge^2 \k^{2n}))$, and hence $p(C)$ is closed. Setting $C=\overline{Y}$ completes the proof. 
\end{proof}

\subsection{Categorical quotients of the exotic and enhanced modules} The rings of invariants $\k[\Vb]^{\G}$ and $\k[\V]^G$ are described in \cite[Propositions 1.5,1.7\&2.2]{springer}. It is shown that $\k[\Vb]=\k[\gl_n]^{\G}$, so $\Vb/\!/\G=\Spec \k[\Vb]^\G\simeq {\Lie}(\T)/\mathbb S_n\simeq \mathbb{A}^n_\k$ is an affine space. Similarly, it is shown that
$\k[\V]^G=\k[\bigwedge\!\!^2\k^{2n}]^G$ and that $\V/\!/G = \Spec \k[\V]^G \simeq \mathbb{A}^n_\k$. 

We consider the quotient maps
\begin{eqnarray}
\begin{array}{rcl}
\label{eq:quotientmaps}
\pi_{\V,G} & : & \V \longrightarrow \V/\! /G\\
\pi_{\Vb,\G} & : & \Vb \longrightarrow \Vb /\!/ \G.
\end{array}
\end{eqnarray}
\begin{theorem}
\label{T:quotients}
\begin{enumerate}
\setlength{\itemsep}{4pt}
\item The inclusion $\Vb\into \V$ induces an isomorphism $\V/\!/ G \simeq\Vb /\!/ \G$.
\item The morphisms $\pi_{\V,G}$ and $\pi_{\Vb,\G}$ are flat and surjective, hence faithfully flat.
\item The fibres of both morphisms consist of finitely many orbits, and each fibre contains a unique closed orbit. 
\item The fibres of $\pi_{\Vb,\G} $ are irreducible, normal complete intersections.
\item The fibres of $\pi_{\V,G}$ are irreducible, normal complete intersections.
\end{enumerate}
\end{theorem}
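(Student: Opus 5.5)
We plan to reduce everything to the classical adjoint quotient of $\gl_n$ and to dimension counts. For (1), Springer's description gives $\k[\Vb]^\G=\k[\gl_n]^\G$ and $\k[\V]^G=\k[\bigwedge^2\k^{2n}]^G$. Restricting a $G$-invariant on $\bigwedge^2\k^{2n}$ to the diagonal elements ${\rm diag}(D,D)$ gives an $\mathbb S_n$-invariant polynomial in the entries of $D$. By Lemma~\ref{lem:semisimple-elements}(2) every semisimple class meets this diagonal, so the restriction $\k[\V]^G\to\k[\Vb]^\G\cong\k[\t]^{\mathbb S_n}$ is injective. For surjectivity we use Springer's explicit generators: the coefficients of the characteristic polynomial of $x\in\bigwedge^2\k^{2n}\subseteq \gl_{2n}$ are squares of a ``Pfaffian-type'' polynomial, whose coefficients restrict to the elementary symmetric functions in $D$. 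Alternatively, both sides are polynomial rings in $n$ variables, and the induced morphism $\Vb/\!/\G\to\V/\!/G$ is bijective on points by Theorem~\ref{prop:double-dim}(2) applied to closed orbits. Since the target is normal, Zariski's main theorem then shows it is an isomorphism, provided the map is birational. That follows in characteristic $\ne2$ because the degrees match (both $1,\dots,n$).

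For (2) and (3) we study the fibres. Each fibre of $\pi_{\Vb,\G}$ over the class of a semisimple $s$ is $\{(v,x): x_s\in \G s\}=\G\cdot(s+\cN(\Vb_L))$ with $L=\G^s$. This is because $\k[\Vb]^\G=\k[\gl_n]^\G$, so the fibre is $\k^n\times(\text{fibre of the adjoint quotient})$. The latter is $\overline{\G(s+\text{regular nilpotent of }\l)}$, of dimension $n^2-n$, so the fibre has dimension $n^2=\dim\Vb-n$ and contains finitely many orbits, since $\cN(\Vb_L)$ has finitely many $L$-orbits. The closed orbit is $\G s$, since closed orbits are semisimple and by Lemma~\ref{lem:semisimple-elements}(1). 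Equidimensionality of fibres between regular varieties (miracle flatness: $\Vb$ is smooth, the quotient is $\mathbb A^n$, and the fibres are equidimensional of dimension $\dim\Vb-n$) gives flatness, and surjectivity is clear. For $\V$ we use the same structure. By (1) and Theorem~\ref{prop:double-dim}(1),(2) each fibre is $\varphi(G\times^\G F)$ with $F$ the corresponding enhanced fibre. Its orbits correspond bijectively to those of $F$ with doubled dimensions, by Theorem~\ref{prop:double-dim}(3), so it has dimension $2n^2=\dim\V-n$. The same miracle flatness argument applies.

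For (4) and (5), the fibre is cut out by $n$ equations in a smooth variety and has codimension $n$, so it is a complete intersection; in particular it is Cohen--Macaulay. Irreducibility comes from having a unique orbit of maximal dimension. In the enhanced fibre this is $\G(s+(v,x_n))$ with $x_n$ regular nilpotent in $\l$ and $(v,x_n)$ of the type that is open in $\cN(\Vb_L)$, because $\cN(\Vb_L)=\k^n\times\cN(\l)$ is irreducible. Transporting via $\varphi$ then gives irreducibility for $\V$. Normality follows from Serre's criterion: a Cohen--Macaulay scheme is normal if it is $R_1$, so we must show the complement of the dense orbit has codimension $\ge2$.

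The main obstacle is this $R_1$ check, especially for $\V$, where the complement's codimension must be verified directly. In the enhanced fibre, the codimension-one orbits in $\cN(\Vb_L)$ would be the concern: we must also rule out non-reducedness/singularity along orbits of codimension $1$. We would handle this by computing that the rank of the differential of the $n$ invariants is full at a point of each orbit of codimension $\le1$. For this we take representatives $s+(v,x_n)$ and use the Jacobian of $\k[\gl_n]^\G$ at $x$, which has full rank exactly when $x$ is regular in $\gl_n$ (Kostant). Here a subtlety arises: orbits $(v,x)$ with $x$ regular but $v$ special form a divisor. The generic ones have $x$ regular, so the Jacobian criterion gives smoothness there, and the locus with $x$ non-regular has codimension $\ge 2$ in the fibre (codimension $2$ in $\cN(\l)$ plus the $\k^n$ factor). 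For $\V$ we would do the analogous Jacobian computation via Springer's invariants. Smoothness at points where $x\in\gl_{2n}$ has minimal polynomial of degree $n$ (exotic-regular) should hold, and the non-regular locus has codimension $2\cdot 2=4\ge2$ by dimension doubling.
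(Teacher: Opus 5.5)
Your argument is sound, but it is organised differently from the paper's. The paper gets (1) from two facts: Springer's injectivity of restriction to $\V^T$, and the algebraic independence of his generators in degrees $1,\dots,n$, which gives an injective graded map between polynomial rings with the same Hilbert series. It gets flatness of $\pi_{\V,G}$ by citing Kostant--Rallis and Levy for the symmetric-space quotient of $\bigwedge^2\k^{2n}$, and quotes Popov--Vinberg for (3) and Jantzen for (4). In (5) it uses that all exotic orbits have even dimension to get $R_1$. It then deduces irreducibility of $\cN(\V)$ from normality and the conical structure, and degenerates an arbitrary fibre to $\cN(\V)$ by an associated-graded argument.

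You proceed differently at each stage:
\begin{itemize}
\item For (1) you write down explicit invariants, namely the coefficients of $\mathrm{Pf}(\Omega(t-x))$ with $\Omega$ the Gram matrix of the symplectic form. (It is the characteristic polynomial itself, not its coefficients, that is a square.)
\item For (2) and (3) you read off the exotic fibres as $G\cdot F$ and get their dimension $2n^2$ from Theorem~\ref{prop:double-dim} before applying miracle flatness, so no symmetric-space input is needed.
\item For irreducibility you use equidimensionality of the complete intersection plus uniqueness of an orbit of dimension $2n^2$, which does not depend on normality.
\end{itemize}
The gain is that the exotic statements become a formal consequence of the $\gl_n$ case and dimension doubling. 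Your Jacobian route to $R_1$ also shows that the scheme-theoretic fibres are generically reduced, which the paper's parity argument leaves implicit. The paper's route is shorter given its citations.

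Two small points. Drop the Zariski main theorem alternative in (1): ``the degrees match'' does not prove birationality, and you do not need it. In (4) you may simply quote normality of the adjoint fibres, since you already have $F\simeq\k^n\times\pi_{\gl_n,\G}^{-1}(s)$.

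The one step you leave as ``should hold'' can be closed without computing with Springer's invariants. By (1), up to the isomorphism $\V/\!/G\simeq\Vb/\!/\G$, the map $\pi_{\V,G}$ restricts on $\Vb$ to $\pi_{\Vb,\G}$. So for $(v,x)\in\Vb$ with $x$ regular in $\gl_n$, the restriction of $d(\pi_{\V,G})_{(v,x)}$ to $T_{(v,x)}\Vb=\Vb$ is $d(\pi_{\Vb,\G})_{(v,x)}$, which is onto. Hence the scheme-theoretic exotic fibre is smooth at every point $G$-conjugate to such an $(v,x)$. Let $F_{\rm nonreg}\subseteq F$ be the locus where $x$ is not regular. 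The remaining points form $\varphi(G\times^{\G}F_{\rm nonreg})$, which has codimension at least $4$ in the fibre by Theorem~\ref{prop:double-dim}(3). Together with the Cohen--Macaulay property, this gives normality.
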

\begin{proof}
By \cite[Proposition 1.5]{springer}, the restriction map $\k[\V]^G\to\k[\V^T]^{N_G(T)/T}$ is injective. Through the inclusion $\Vb\into\V$, the subspace $\V^T$ is identified with ${\rm Lie}(\T)$. 
Remark \ref{rem:normalisers} applied to a regular semisimple element in ${\rm Lie}(\T)$ shows that  ${\rm Lie}(\T)/\mathbb S_n= \V^T/(N_G(T)/T)$. 
Combining with Chevalley restriction theorem we obtain an injective graded algebra morphism $\k[\V]^G\into \k[\Vb]^\G$.  By \cite[\textsection 1.2 \& Proposition 1.7]{springer}, the generators of $\k[\V]^G$ are algebraically independent and have degrees $1,2,...,n$.
Comparing  with the degrees of the generators of $\k[\Vb]^{\G}=\k[\gl_n]^{\G}$ gives (1).

Now we prove (2) for $\pi_{\V,G}$. Since $\k[\V]$ is free over $\k[\bigwedge^2 \k^{2n}] \otimes 1$, it suffices to show that $\bigwedge^2 \k^{2n} \to \bigwedge^2 \k^{2n} /\!/ G$ is flat and surjective, whilst analogous remarks hold for the enhanced module.

The map $\bigwedge^2 \k^{2n} \to \bigwedge^2 \k^{2n} /\!/ G$ is faithfully flat when $\k =\C$ thanks to \cite[Theorem~15]{KR71}, whilst when $\chr(\k) > 0$ it can be deduced from \cite[Corollary~2.22]{Le09}, using miracle flatness. This completes the proof of (2) for $\pi_{\V,G}$. The argument for $\pi_{\Vb,\G} $ is identical, using \cite[Proposition~7.13]{JaNO} to see that $\gl_n \to \gl_n /\!/ \G$ is faithfully flat, when $\k$ has any characteristic. This completes (2).

Thanks to \cite[Proposition~2.6]{springer} there are finitely many $\G$-orbits in $\cN(\Vb)$. Combining with Lemma~\ref{lem:poset-nil}(1) the same holds for $G$-orbits in $\cN(\V)$. Applying Corollary~2 of \cite[\textsection 5.2]{PV94} and \cite[\S 2.1.3]{BR} we deduce (3). Note that the cited corollary is formulated over $\C$, but the argument applies equally well to any algebraically closed field.

By (1) of the current theorem, we know that $\Vb/\!/ \G \simeq \gl_n /\! / \G$. Furthermore, for $s \in \gl_n /\!/ \G$ and writing $\pi_{\gl_n,\G} : \gl_n \to \gl_n /\! / \G$ for the adjoint quotient, we have that $(\pi_{\Vb,\G} )^{-1}(s) \simeq \k^n \times \pi_{\gl_n,\G}^{-1}(s)$ as reduced schemes. By \cite[Propositions 7.13 and 8.5]{JaNO}, the fibres of $\pi_{\gl_n,\G}$ are irreducible, normal complete intersections. This proves (4).


It remains to prove (5). The argument for normality in the present paragraph is quite standard, and is modelled on the adjoint representation (see \cite[Proposition~8.5]{JaNO} for background and references). Since $\pi_{\V,G}$ is faithfully flat the fibres are non-empty and all have the same dimension, which is $\dim \V - n$. Since each fibre is defined by precisely $n$ polynomial functions, they are complete intersections, hence Cohen--Macaulay. By part (3) and Theorem~\ref{prop:double-dim} we see that each fibre is a union of finitely many orbits of even dimension, hence each fibre is regular in codimension 2. We can now apply Serre's criteria to see that the fibres are normal.

Finally we show that the fibres of $\pi_{\V,G}$ are irreducible, starting with the central fibre. Since $\cN(\V)$ is $\k^\times$-stable the irreducible components intersect at $0 \in \cN(\V)$, but normality ensures the components cannot intersect. Therefore $\cN(\V)$ is irreducible. Let $x \in \V /\!/G$. The natural grading on $\k[\V]$ induces a filtration on $\k[\V]$ that descends to a filtration on $\k[\pi_{\V,G}^{-1}(x)]$. Passing to its associated graded we have $\k[\cN(\V)] \twoheadrightarrow \gr \k[\pi_{\V,G}^{-1}(x)]$ as graded algebras. Since the fibres of $\pi_{\V,G}$ all have the same dimension and $\pi_{\V,G}^{-1}(0)$ is reduced, it follows that $\k[\cN(\V)] \cong \gr \k[\pi_{\V,G}^{-1}(x)]$. Now $\k[\pi_{\V,G}^{-1}(x)]$ is an integral domain, and so $\pi_{\V,G}^{-1}(x)$ is irreducible. This concludes (5).
\end{proof}

\begin{remark}\label{rem:saturation}{\rm Let $X\subset\Vb$ be  $\pi_{\Vb,\G}$-saturated. The set $\widetilde{X}=\pi_{\V,G}^{-1}\pi_{\Vb,\G}(X)$ defined using the identification in Theorem \ref{T:quotients} (1)  is $\pi_{\V,G}$-saturated. If $X$ is also open, then $\widetilde{X}$ is open. }
\end{remark}

\section{Enhanced and Exotic Jordan classes}\label{ss:EnhancedExoticClasses}


We introduce equivalence relations on $\Vb$ and $\V$. The elements $(v,x), (v',x') \in\Vb$ with JKV decomposition $(v,x)=(0,x_s)+(v,x_n)$ and  $(v',x')=(0,x'_s)+(v',x'_n)$ are said to be Jordan equivalent if there is $g\in \G$ such that 
$g \G^{x_s} g^{-1}=\G^{x'_s}$,  and $g(v,x_n)=(v',x_n')$.  Equivalence classes with respect to this relation are called {\it enhanced Jordan classes}. 
Observe that the Levi subgroup $\G^{x_s}$ contains a maximal torus of $\G$, so $\G^{x_s}$ does not fix nonzero vectors in $\k^n$. 
This implies that 
$\Vb^{\G^{x_s}}\subset\mathfrak{gl}_n$ and therefore it equals $\mathfrak{z}(\mathfrak{gl}_n^{x_s})$.
Thus  the enhanced  Jordan class of  $(v,x)$ is  the set

\begin{equation}
\label{eq:enhanced-jordan-class}
\J(v,x)=\G(v,(\Vb^{\G^{x_s}})^{\Gbreg}+x_n)=
\G\left(v,\mathfrak{z}(\gl_n^{x_s})^{\Gbreg}+x_n\right).
\end{equation}

If $v=0$, then any element equivalent to $(v,x)$ has trivial component in $\k^n$,  and its equivalence class is a usual Jordan class in $\mathfrak{gl}_n$.  More generally, the projection through $p$ as in \eqref{eq:projections}  of any enhanced Jordan class $\mathfrak{J}$ is a usual Jordan class in $\mathfrak{gl}_n$.

\medskip

The elements $(v,x), (v',x') \in\V$ with JKV decomposition $(v,x)=(0,x_s)+(v,x_n)$ and  $(v',x')=(0,x'_s)+(v',x'_n)$ are said to be Jordan equivalent if there is $g\in G$ such that 
$g G^{x_s} g^{-1}=G^{x'_s}$,  and $g(v,x_n)=(v',x_n')$.  Equivalence classes with respect to this relation are called {\it exotic Jordan classes}. 

Since $G^{x_s}$ contains a maximal torus of $G$, which can never fix nonzero vectors in $\k^{2n}$, we have  
$\V^{G^{x_s}}=\{(0,x')\in\V~|~G^{x_s}\subset G^{(0,x')}\}$ and $(\V^{G^{x_s}})^{\Greg}=\{(0,x')\in\V~|~G^{x_s}= G^{(0,x')}\}$ contains only elements with reductive centralisers. Therefore \cite[Corollary 4.7]{springer} implies that $(\V^{G^{x_s}})^{\Greg}$ consists of semisimple elements. The exotic Jordan class of $(v,x)$ is the set 
\begin{equation}\label{eq:exotic-jordan-class}J(v,x)=G (v,(\V^{G^{x_s}})^{\Greg}+x_n)=G \{(v,x'_s+x_n)\in\V~|~G^{x_s}= G^{x_s'}\}.
\end{equation}

\begin{remark}\label{rem:reg-ss}Let $\mathbf x_s:= {\rm diag}(a_1\id_{\lb_1},\,\ldots,\,a_l\id_{\lb_l})\in \gl_n$ with $a_i\neq a_j$ for $i\neq j$, and let $(0,x_s)=\varphi(0,\mathbf x_s)$.
The fixed points subspace $\V^{G^{x_s}} = \V^{G_\lb}$ coincides with the set of diagonal matrices of the form \eqref{eq:centraliser-exotic}, with fixed $\lb$. It also coincides with $(0, \mathfrak z(\gl_{2n}^{x_s}))$ under the inclusion $\V = \psi(\V) \subseteq \Vb_{2n}$. With the help of Proposition \ref{prop:double-dim} we see that the three open subsets 
\begin{equation*} 
(0,\mathfrak{z}(\gl_n^{\mathbf  x_s})^{\Gbreg})\subset \Vb, \quad(\V^{G^{x_s}})^{\Greg}\subset \V\quad \mbox{ and } \quad(0,\mathfrak{z}(\gl_{2n}^{x_s})^{\GL_{2n}\operatorname{-reg}})\subset\Vb_{2n}\end{equation*} 
are  identified through $\varphi$ and $\psi$.
\end{remark}

\begin{proposition}\label{prop:basic-properties}
\begin{enumerate}
\item There are finitely many enhanced  Jordan classes, and each one is irreducible, $\G$-stable, and consists of $\G$-orbits of fixed dimension.
They are  parametrized by $\G$-orbits of pairs $(L,\bO)$ where $L$ is a Levi subgroup of $\G$  and $\bO$ is an $L$-orbit in the enhanced nilcone of $L$.
  Equivalently they are parametrized by pairs $(\lb,\bO)$ where $\lb$ is a partition of $n$ and $\bO$ is an $N_{\G}(\G_{\lb})$-orbit in the enhanced nilcone of $\G_{\lb}$. 
\item There are finitely many exotic  Jordan classes, and each one is irreducible, $G$-stable, and consists of $G$-orbits of fixed dimension.
They are parametrized by $G$-orbits of pairs $(M,\O)$ where $M$ is the stabiliser of a semisimple element in $\V$ and $\O$ is an $M$-orbit in the exotic nilcone of $M$.  
 Hence, they are parametrized by pairs $(\lb,\O)$ where $\lb$ is a partition of $n$ and $\O$ is an $N_G(G_{\lb})$-orbit in the exotic nilcone of  $G_{\lb}$.
\item For each $m\in \mathbb N$ choose a total ordering $\leq$ on the set $\mathcal{Q}_m$ of bipartitions of $m$. The set of  enhanced  Jordan classes and the set of exotic  Jordan classes are both in bijection with the set of pairs $(\lb, (\mu  ; \nu))$ where $\lb=(\lb_1,\,\ldots,\,\lb_l)$ is a partition of $n$ and  $(\mu ; \nu)= ((\mu_1;\nu_1),\,\ldots,\,(\mu_l;\nu_l))$ is an $l$-tuple where the $i$-th component is a bipartition of $\lb_i$ and if $\lb_i=\lb_{i+1}$ then $(\mu_i;\nu_i)\geq(\mu_{i+1};\nu_{i+1})$. 
\end{enumerate}
\end{proposition}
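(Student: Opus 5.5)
We argue directly from the definitions \eqref{eq:enhanced-jordan-class} and \eqref{eq:exotic-jordan-class}, and handle the enhanced case first. The class of $(v,x)$ is the image of the morphism $\G\times\big(\mathfrak z(\gl_n^{x_s})^{\Gbreg}\big)\to\Vb$, $(g,z)\mapsto g(v,z+x_n)$. Its source is irreducible, since $\mathfrak z(\gl_n^{x_s})^{\Gbreg}$ is a nonempty open subset of a vector space, so the class is irreducible; it is $\G$-stable by construction. Constancy of orbit dimension comes from the dimension computation used in Theorem~\ref{prop:double-dim}(3). For $z$ in this regular locus we have $\G^{z}=\G^{x_s}=L$, and $(0,z)+(v,x_n)$ is the JKV decomposition, so
\[\dim\G(v,z+x_n)=\dim\G z+\dim L(v,x_n)=\dim\G-\dim L+\dim L(v,x_n),\]
which does not depend on $z$.

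For the parametrisation we send $(v,x)$ to the pair $(\G^{x_s},\G^{x_s}(v,x_n))$. By definition of the equivalence, two elements are Jordan equivalent exactly when their pairs are $\G$-conjugate. Conversely, every pair $(L,\bO)$ with $L$ a Levi subgroup and $\bO\subseteq\cN(\Vb_L)$ is realised. To see this, choose $z\in\mathfrak z(\mathfrak l)$ with $\G^z=L$; such $z$ exist because every Levi subgroup of $\GL_n$ is the centraliser of a semisimple element. Then for $(v,y)\in\bO$ the element $(v,z+y)$ has JKV decomposition $(0,z)+(v,y)$, since $y$ is nilpotent in $\mathfrak l$ and commutes with $z$.

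Conjugating $L$ to a standard $\G_\lb$ leaves a residual freedom given by $N_\G(\G_\lb)$ acting on the orbits $\bO$, which yields the second parametrisation. Finiteness follows because there are finitely many partitions $\lb$, and $\cN(\Vb_\lb)$ has finitely many $\G_\lb$-orbits: it is a product of enhanced nilcones, each with finitely many orbits by \cite{ah}.

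The exotic case (2) is proved the same way, with $M=G^{x_s}$. By Lemma~\ref{lem:semisimple-elements}(2) and \eqref{eq:ss-exotic}, $M$ is conjugate to some $G_\lb$. Remark~\ref{rem:reg-ss} identifies $(\V^{G_\lb})^{\Greg}$ with elements $(0,x_s')$ satisfying $G^{x_s'}=G_\lb$. The nilcone $\cN(\V_\lb)$ has finitely many orbits by Lemma~\ref{lem:poset-nil}(1) applied factorwise. Orbit dimension is again constant, equal to $\dim Gx_s'+\dim G_\lb(v,x_n)$.

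For (3) we use \eqref{eq:normalisers}: the group $N(\G_\lb)/\G_\lb\simeq\prod_i\mathbb S_{d_i(\lb)}$ acts on $\cN(\Vb_\lb)/\G_\lb=\prod_j\mathcal Q_{\lb_j}$ by permuting factors with equal $\lb_j$. Since $N(\G_\lb)$-orbits on $\cN(\Vb_\lb)$ are orbits of this finite group on tuples of bipartitions, choosing the nonincreasing representative in each block of equal parts gives the stated normal form. The identical description on the exotic side comes from the compatible isomorphism in \eqref{eq:normalisers} together with Lemma~\ref{lem:poset-nil}(1) factorwise.

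The main point to verify is that this permutation action is the obvious one on labels. That is, an element of $N_\G(\G_\lb)$ (resp. $N_G(G_\lb)$) representing a permutation of equal blocks transports the orbit labelled $(\mu_i;\nu_i)$ on block $i$ to the same label on the target block. This holds because such an element can be chosen as a permutation matrix that preserves standard bases blockwise, hence maps normal bases to normal bases.
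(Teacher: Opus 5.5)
Your proof is correct and follows essentially the same route as the paper. Irreducibility comes from the action morphism on $\G\times\mathfrak z(\gl_n^{x_s})^{\Gbreg}$ (resp. its exotic analogue), the parametrisation is read off from the definition of Jordan equivalence, finiteness comes from the finitely many partitions $\lb$ together with the finitely many nilpotent orbits for $\G_\lb$ and $G_\lb$, and part (3) comes from Remark~\ref{rem:normalisers}. You also spell out several details the paper leaves implicit, namely that every pair is realised via some $z$ with $\G^z=L$, the explicit orbit-dimension formula, and why block permutations act on bipartition labels in the obvious way, and all of these check out.
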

\begin{proof}
By construction enhanced and exotic Jordan classes are unions of orbits of the same dimension.  Irreducibility follows because  the enhanced, respectively exotic Jordan class of $(0,x_s)+(v,x_n)$ is the   image through the action morphism of $\G$, respectively  $G$, of the  irreducible variety $(v,\mathfrak z(\gl_n^{x_s})^{\Gbreg}+x_n)$, respectively 
$(v,(\V^{G^{x_s}})^{\Greg}+x_n)$.  The parametrisation  in terms of pairs follows from the construction.   By the description of the centralisers of semisimple elements, 
 the set of orbits of stabilisers of semisimple elements is, in both cases,  in bijection with the set of partitions $\lb$ of $n$,  and the set of enhanced, respectively exotic nilpotent orbits for $\GL_{\lb}$, respectively, $G_{\lb}$  is finite \cite{ah,K1,springer}, therefore the set of orbits in the nilpotent cone for the normalisers of the centralisers of semisimple elements are a fortiori finite.  Statement (3) follows combining (1) and (2) with Remark \ref{rem:normalisers}. 
\end{proof}

Observe that nilpotent orbits are examples of Jordan classes. The following result is a generalisation  of Lemma \ref{lem:poset-nil}. We use the inclusions $\Vb \subseteq \V \subseteq \Vb_{2n}$ from \eqref{eq:towerofinclusions} in what follows.
\begin{theorem}\label{thm:main-bijection}
\begin{enumerate}
\setlength{\itemsep}{4pt}
\item The assignment $\mathfrak J\mapsto \varphi(G\times^{\G}\mathfrak J)$   establishes a bijection between the set of enhanced Jordan classes and the set of exotic Jordan classes, with inverse induced by  $J\mapsto J\cap \Vb$.
\item The assignment $J\mapsto\psi(\GL_{2n} \times^G J)$ establishes an injective map from the set of exotic Jordan classes in $\V$ to the set of enhanced Jordan classes in $\Vb_{2n}$. The assignment $\J\mapsto \J\cap \V$ is a left inverse. 
\end{enumerate}
\end{theorem}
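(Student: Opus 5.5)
Both parts will be reduced to the parametrisation of Jordan classes in Proposition~\ref{prop:basic-properties} and to the nilpotent statements in Lemma~\ref{lem:poset-nil}, applied factorwise to Levi-type subgroups. For (1), fix an enhanced class $\J=\J(v,x)$. Up to $\G$-conjugacy we may assume $x_s=\mathbf x_s={\rm diag}(a_1\Id_{\lb_1},\ldots,a_l\Id_{\lb_l})$ and $(v,x_n)\in\cN(\Vb_\lb)$. Since $\varphi$ is compatible with JKV decompositions (Section~\ref{ss:JKVdecomposition}), the point $\varphi(v,x)$ has semisimple part $\varphi(0,\mathbf x_s)$, with stabiliser $G_\lb$, and nilpotent part $\varphi(v,x_n)\in\cN(\V_\lb)$. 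By Remark~\ref{rem:reg-ss}, $\varphi$ maps $(0,\mathfrak z(\gl_n^{\mathbf x_s})^{\Gbreg})$ onto $(\V^{G_\lb})^{\Greg}$. Hence
\[
\varphi(G\times^\G\J)=G\bigl(\varphi(v,x_n)+(\V^{G_\lb})^{\Greg}\bigr)=J(\varphi(v,x)),
\]
so the assignment is well defined and lands in the exotic classes.

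For surjectivity, note that every exotic class has the form $J(v,x)$ with $x_s$ as in \eqref{eq:ss-exotic}. By Lemma~\ref{lem:poset-nil}(1), applied to each factor $\Sp_{2\lb_i}$ (as in the proof of Theorem~\ref{prop:double-dim}(1)), the nilpotent part $(v,x_n)$ can be $G_\lb$-conjugated into $\cN(\Vb_\lb)$. This gives a preimage. For injectivity, we compare parameters: in both settings classes correspond to pairs $(\lb,\text{orbit})$. The stabiliser types match because $\lb$ is recovered from $G^{x_s}\simeq\Sp_{2\lb}$. The nilpotent data match via the bijection $\cN(\Vb_\lb)/\G_\lb\to\cN(\V_\lb)/G_\lb$ of Lemma~\ref{lem:poset-nil}(1), applied factorwise, which must be compatible with the normaliser actions. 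By Remark~\ref{rem:normalisers}(1), $N_\G(\G_\lb)/\G_\lb\simeq N_G(G_\lb)/G_\lb$, both acting by permuting equal-size blocks, and the factorwise bijection is equivariant for these permutations. So $N$-orbits correspond and Proposition~\ref{prop:basic-properties}(3) gives injectivity.

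It remains to show $J\cap\Vb=\J$ for $J=\varphi(G\times^\G\J)$. The inclusion $\supseteq$ is clear. For $\subseteq$, take $(v',x')\in J\cap\Vb$. Its enhanced class $\J'$ satisfies $\varphi(G\times^\G\J')=J$, so $\J'=\J$ by injectivity, and hence $(v',x')\in\J$. Here we use that the JKV decomposition of $(v',x')$ computed in $\Vb$ agrees with the one computed in $\V$, which holds by the compatibility of the decompositions with the embeddings.

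Part (2) follows by the same scheme one level up, using $\psi$, Lemma~\ref{lem:poset-nil}(2) and Theorem~\ref{prop:double-dim}(6). The image $\psi(\GL_{2n}\times^G J(v,x))$ equals the enhanced class in $\Vb_{2n}$ with Levi $\GL_{2n}^{x_s}\simeq\GL_{2\lb}$ and nilpotent orbit obtained factorwise by the doubling $(\mu;\nu)\mapsto(\mu\cup\mu;\nu\cup\nu)$. This identification uses Remark~\ref{rem:reg-ss} for the regular centre parts and Theorem~\ref{prop:double-dim}(5). The doubling map is injective, and distinct $\lb$ give distinct $2\lb$, so the assignment is injective on parameters. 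For the left inverse, we check $\psi(\GL_{2n}\times^G J)\cap\V=J$ as above: any point of the intersection lies in some exotic class with the same image, hence in $J$. This uses Remark~\ref{rem:normalisers}(2) to align semisimple parts.

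The main obstacle is the careful bookkeeping of normaliser actions: the factorwise bijections of nilpotent orbits must be shown equivariant for the block-permutation groups. The same bookkeeping must also confirm that intersecting with the smaller module does not pick up elements whose JKV decomposition in the smaller module differs. I would handle both by working with explicit block-diagonal representatives throughout.
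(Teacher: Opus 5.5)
Your proposal is correct and follows essentially the same route as the paper: well-definedness via Remark~\ref{rem:reg-ss}, surjectivity because every $G$-orbit in $\V$ meets $\Vb$, and injectivity by reducing to a common semisimple stabiliser and then applying Lemma~\ref{lem:poset-nil} factorwise. You are more explicit than the paper on two points it leaves implicit. First, you handle the normaliser bookkeeping via Remark~\ref{rem:normalisers}(1). Second, you check, using compatibility of the JKV decompositions with the embeddings, that $J\mapsto J\cap\Vb$ is an inverse and that $\J\mapsto\J\cap\V$ is a left inverse.
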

\begin{proof}
(1)  Combining the description of the enhanced and the exotic Jordan classes \eqref{eq:enhanced-jordan-class}, \eqref{eq:exotic-jordan-class} with Remark  \ref{rem:reg-ss} we see that if $\mathfrak J$ is the enhanced Jordan class of $(v,x)$, then
$G\J=\varphi(G\times^{\G}\J)$ is the exotic Jordan class of $\varphi(v,x)$, so the map is well-defined. It is surjective by Proposition \ref{prop:double-dim}. We prove injectivity. Let $(v,x)$ and $(v',x')\in\Vb$ have Jordan classes $\J$ and $\J'$, such that
$\varphi(G\times^{\G}\mathfrak J)=\varphi(G\times^{\G}\J')$. Then the classes $J$ and $J'$ of $\varphi(v,x)$ and $\varphi(v', x')$ in $\V$ are equal. Let $x_s$ and $x_s'$ be the semisimple parts of $x$ and $x'$, respectively. We may assume that $G^{\varphi(x_s)}=G^{\varphi(x_s')}$ so Remark~\ref{rem:reg-ss} shows that $\G^{x_s}=\G^{x'_s}$. Let $x_n$ and $x_n'$ be the nilpotent parts of $x$ and $x'$, respectively. Then, $x_n,x_n'\in\gl_n^{x_s}\cap\cN(\Vb)$, so $(v,x_n)$ and $(v',x_n')$ lie in the enhanced nilpotent cone for $\G^{x_s}$ and their images thrhough $\varphi$ are $G^{x_s}$-conjugate. Hence they are $\G^{x_s}$-conjugate by Lemma~\ref{lem:poset-nil}, and we conclude that $\J=\J'$.

 (2) Combining the description of the enhanced and the exotic Jordan classes \eqref{eq:enhanced-jordan-class}, \eqref{eq:exotic-jordan-class} with Remark  \ref{rem:reg-ss} we see that if $J$ is the exotic Jordan class of $(v,x)$, then
$\GL_{2n}\psi(J)=\psi(\GL_{2n}* J)$ is the exotic Jordan class of $\psi(v,x)$, so the map is well-defined.  Injectivity is proved as in (1), observing that all nilpotent orbits in the enhanced module $\Vb_{2n}$ that are in the image of $\psi$ correspond to bipartitions of the form $(\mu\cup\mu ; \nu\cup\nu)$.   
\end{proof}

\subsection{Local study of enhanced Jordan classes}\label{sec:geometryofclasses} 
In this Section we study the geometry of  enhanced Jordan classes, showing that they are locally closed and smooth.

Let $M$ be a Levi subgroup of a parabolic subgroup of $\G$ and let ${\mathfrak m}={\rm Lie}(M)$.
We set  
\begin{equation}\label{eq:mcirc}
{\mathfrak m}^\circ:=\{x\in\mathfrak{m}~|~\G^{x_s}\subseteq M\}.
\end{equation}
By \cite[Lemma 2.1]{broer}, whose proof holds also for the reductive group $\G$ and for arbitrary characteristic of $\k$, the subset $\mcirc$ is a $\pi_{\mathfrak{m},M}$-saturated open affine subset of $\mathfrak{m}$.  Hence
 we have
\begin{equation}
{\mathfrak m}^\circ=\{x\in\mathfrak{m}~|~\G^{x}\subseteq M\}.
\end{equation}
In addition,   $\Vb_M:=\k^n\oplus\mathfrak{m}$ is the enhanced module for $M$, and so   $\Vb_M/\!/M=\mathfrak{m}/\!/M$, \cite{springer},  hence the subset $(\k^n,\mcirc)$ is a $\pi_{\Vb_M,M}$-saturated open affine subset of $\Vb_M$. For any semisimple element $y\in\mathfrak{m}^\circ$  we have:
\begin{align}\label{eq:regM=regG}
\mathfrak{z}(\gl_n^y)^{\Gbreg}=\{z\in\mathfrak{gl}_n~|~\gl_n^y=\gl_n^z\}=\{z\in\mathfrak{gl}_n~|~\mathfrak{m}^{y}=\gl_n^z\}=\mathfrak{m}^\circ\cap\mathfrak{z}(\mathfrak{m}^y)^{M\operatorname{-reg}}.
\end{align}

\begin{lemma}\label{lem:slice-etale}
Let $M$ be a Levi subgroup of a parabolic subgroup of $\G$ with enhanced module $\Vb_M$,  let $\mathfrak{m}={\rm Lie}(M)$, let $y$ be a semisimple element in ${\mathfrak m}^\circ$, and let
\begin{equation}
\phi\colon \G\times^M\Vb_M\to \Vb, \quad g*(v,x)\mapsto g(v,x).
\end{equation}
Then,  there are a $\pi_{\Vb,\G}$-saturated principal open subset $U_{\mathfrak{gl_n}}$ of $\Vb$ and a $\pi_{\Vb_M,M}$-saturated principal open  subset $U_{\mathfrak m}$ of  $\Vb_M$ containing $(0,y)$,  such that the restriction of $\phi$ to $\G\times^MU_{\mathfrak m}$ is a surjective \'etale morphism to $U_{\mathfrak{gl}_n}$.
\end{lemma}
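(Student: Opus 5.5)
The strategy is to reduce to the classical adjoint situation via the Luna-type étale slice for Levi subalgebras, and then take a product with $\k^n$. The classical input is the following (Broer's lemma, \cite[Lemma 2.1]{broer} and its consequences, valid for $\G$ in any characteristic). The map
\[
\phi_0\colon \G\times^M\mcirc\to\gl_n
\]
has image $\G\mcirc$, which is a $\pi_{\gl_n,\G}$-saturated open subset, and $\phi_0$ is étale onto it. The first step is to set this up concretely. Since $\mcirc$ is defined by $\G^{x_s}\subseteq M$, it is the preimage under $\pi_{\mm,M}\colon\mm\to\mm/\!/M\simeq\t/W_M$ of the open set where the roots of $\G$ not in $M$ do not vanish. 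That open set is principal: take the $W_M$-invariant function $f=\prod_{\alpha\in\Phi\setminus\Phi_M}\alpha$ on $\t$. Replacing $f$ by the norm $F=\prod_{w\in\mathbb S_n/W_M}w f$, which is $\mathbb S_n$-invariant, we obtain a principal open set $\{F\neq0\}\subseteq\t/\mathbb S_n$, which is in general strictly smaller than the image of $\mcirc$.

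So I will shrink. Put $U_{\mm}'=\{x\in\mm\mid F(x)\neq0\}$ and $U_{\gl_n}'=\{x\in\gl_n\mid F(x)\neq0\}$. Both are principal, saturated and affine, with $U_{\mm}'\subseteq\mcirc$. The point $y$ lies in $U_\mm'$ as long as $F(y)\neq0$. Since $y$ is semisimple with $\gl_n^y\subseteq\mm$, we have $f(y)\neq0$. However, the conjugates $wf(y)$ could vanish, so $F$ must be chosen more carefully. The fix is to localise at the image point: choose an $\mathbb S_n$-invariant function $F$ that is nonzero at the image of $y$ and vanishes on the finite set of the other $W$-translates of the image of $y$ that lie in the complement of the $f\neq0$ locus. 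Then set
\[
U_\mm=\k^n\oplus(\mcirc\cap\pi_\mm^{-1}\{F\neq0\}).
\]
Because $\mcirc$ is itself saturated with affine image, we can further intersect with a principal open set inside it to keep $U_\mm$ principal. This bookkeeping is the step I expect to be the main obstacle: we need $U_\mm$ to be principal in $\Vb_M$, saturated, and to contain $(0,y)$, while $\pi_{\mm,M}(U_\mm)$ maps isomorphically onto the principal open set $\pi(U_{\gl_n})\subseteq\t/\mathbb S_n$. The invariant rings $\k[\Vb_M]^M=\k[\mm]^M$ and $\k[\Vb]^\G=\k[\gl_n]^\G$ make saturation of the product sets automatic.

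The remaining steps are then as follows.

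First, $\phi$ restricted to $\G\times^M U_\mm$ is the product of $\phi_0|_{\G\times^M U'}$ with the identity on the $\k^n$ factor, after twisting. Concretely,
\[
\G\times^M(\k^n\oplus U')\simeq(\G\times^M U')\times_{\G/M}(\G\times^M\k^n).
\]
It is cleaner to argue étaleness via differentials. At $g*(v,x)$ with $x\in U'\subseteq\mcirc$, the differential of $\phi$ has image $\g(v,x)+\k^n\oplus\mm$. Its $\gl_n$-component is surjective because $\mathrm{ad}\,x$ is invertible on $\mathfrak u\oplus\mathfrak u^-$, by $\G^{x_s}\subseteq M$ (the eigenvalues of $\mathrm{ad}\,x_s$ on $\gl_n/\mm$ are nonzero). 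The $\k^n$-component is surjective from the $\k^n$ summand. Hence the differential is surjective between smooth varieties of equal dimension, so $\phi$ is étale there.

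Second, for surjectivity onto $U_{\gl_n}=\k^n\oplus\G U'$, an element $(v,x)$ with $x=gx'$ and $x'\in U'$ is the image $\phi(g*(g^{-1}v,x'))$. The equality $\G U'=U'_{\gl_n}$ follows because $x_s$ is conjugate into $\t$, hence into $\mm$, and the centraliser condition is forced by the choice of $F$.
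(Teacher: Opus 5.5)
Your route differs from the paper's. The paper verifies the hypotheses of Bardsley--Richardson's Theorem~6.2 (Luna's fundamental lemma) at the single point $1*(0,y)$. These are: closedness of $\G*(0,y)$ and $\G(0,y)$; injectivity of $\phi$ on $\G*(0,y)$, since $\G^y\subseteq M$; and \'etaleness at $1*(0,y)$, via the tangent computation $[\gl_n,y]+\mm=\gl_n$. The theorem then supplies an invariant $f\in\k[\Vb]^\G$ with $f(y)=1$, and the pull-back $f\circ\phi\in\k[\Vb_M]^M$ defines $U_{\mm}$.

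You instead prove \'etaleness on all of $\G\times^M(\k^n\oplus\mcirc)$. Your differential argument for this is correct; equivalently, $\phi\simeq\phi_0\times\mathrm{id}_{\k^n}$ because $\G\times^M\k^n\simeq\G/M\times\k^n$. You then cut down to principal saturated opens by hand. This is more elementary and explicit than invoking Luna, but all the weight falls on producing the principal opens, and that step is not correctly carried out.

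\textbf{The gap is the choice of $F$.} The $F$ you describe cannot exist: an $\mathbb S_n$-invariant function takes the same value at $y$ and at every $\mathbb S_n$-translate of $y$. More importantly, vanishing on a finite set is not what your surjectivity step needs. That step (``the centraliser condition is forced by the choice of $F$'') requires $\{x\in\gl_n : F(x)\neq 0\}\subseteq\G\mcirc$, so $F$ must vanish on the entire complement of $\G\mcirc$, which is usually not finite. For example, take $n=3$ and $M=\GL_2\times\GL_1$. The complement is the set of $x$ with scalar semisimple part, of codimension $2$, so $\G\mcirc$ is not even affine. Your norm also vanishes at $y=\mathrm{diag}(a,a,b)$ there.

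\textbf{The repair.} Membership in $\G\mcirc$ depends only on the conjugacy class of $x_s$, so $\G\mcirc$ is $\pi_{\gl_n,\G}$-saturated. After $M$-conjugating $y$ into $\t\subseteq\mm$, its complement corresponds to $Z=\bigcap_{w\in\mathbb S_n}\{wf=0\}\subseteq\t$. This is a closed $\mathbb S_n$-stable set not containing $y$, because $f(y)\neq0$. Since $\t\to\t/\mathbb S_n$ is finite, hence closed, there is $F\in\k[\t]^{\mathbb S_n}=\k[\gl_n]^\G$ with $F|_Z=0$ and $F(y)\neq 0$. Then $F(x)\neq0$ forces $x\in\G\mcirc$.

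Now put $U_{\gl_n}=\{(v,x)\in\Vb : F(x)\neq 0\}$ and $U_{\mm}=\{(v,x)\in\Vb_M : f(x)F(x)\neq0\}$. The latter is principal because $\mcirc=\{f\neq 0\}$ is already principal, as you observed. Both sets are saturated and $U_\mm$ contains $(0,y)$. Your surjectivity and \'etaleness arguments then go through verbatim.

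Finally, drop the requirement that $\pi_{\mm,M}(U_\mm)\to\pi_{\gl_n,\G}(U_{\gl_n})$ be an isomorphism. It is false in general: for $M=\T$ the map $\t\to\t/\mathbb S_n$ has degree $n!$ over every nonempty open set. The lemma only asks for a surjective \'etale morphism, so it is not needed.
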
 
\begin{proof}
We verify the hypotheses of \cite[Theorem 6.2]{BR}, that is,  the arbitrary characteristic extension of Luna's fundamental Lemma.  The varieties $\G\times^M\Vb_M$ and $\Vb$ are smooth and irreducible.  The orbit $\G*(0,y)$ is closed because  it is the image through $\pi_{\G\times \Vb_M, M}$ of the closed subset $\G \times \{(0,y)\}$.  
In addition, 
$\phi(\G*(0,y))=\G (0,y)\subset (0,\mathfrak{gl}_n)$ is a semisimple adjoint orbit, whence it is closed.  Also, $\phi$ is injective on $\G*(0,y)$: indeed if $\phi(g*(0,y))=\phi(h*(0,y))$ for some $g,h\in\G$, then $m:=g^{-1}h\in G^y\subset M$,  so $h*(0,y)=gm*(0,y)=g*m(0,y)=g*(0,y)$.  Finally, we verify that $\phi$ is \'etale at $1*(0,y)$.

We define the maps 
\begin{align*}
\iota_1&\colon \G\to \G\times\Vb_M, && g\mapsto (g,0,y),\\
\iota_2&\colon V\to  \G\times\Vb_M&& v\mapsto (1,v,y),\\
\iota_3&\colon \mathfrak m\to  \G\times\Vb_M&& b\mapsto (1,0,b).
\end{align*}
Then, by \cite[\S 16.1]{borel},  the tangent map at $(1,0,y)$ of the morphism $\widetilde{\phi}\colon \G\times \Vb_M\to \Vb$ given by 
$(g*(v,x) )\mapsto (gv,gxg^{-1})$ is the linear map $d_{(1,0,y)}\widetilde{\phi}\colon \mathfrak{gl}_n\oplus \Vb_M\to \k^n\oplus\mathfrak{gl}_n$ given by 
\begin{align*}
d_{(1,0,y)}(B,v,x)&=d_1(\widetilde{\phi}\circ\iota_1)(x)+d_0(\widetilde{\phi}\circ\iota_2)(v)+d_y(\widetilde{\phi}\circ\iota_3)(B)\\
&=(0+v+0,[x,y]+0+B)
\end{align*}
for $B\in \gl_n$ and $(v,x)\in \Vb_M$.
Surjectivity of the map $\mathfrak{gl}_n\times  {\mathfrak m}\to \mathfrak{gl}_n$ given by $(B,x)\mapsto [x,y]+B$ is proved as in the proof of \cite[Lemma 2.1]{broer}, which is still valid in arbitrary characteristic. Therefore the above tangent map is surjective at $(1,0,y)$, implying surjectivity
of the tangent map of $\phi$ at $1*(0,y)$. By dimensional reasons the latter is also bijective whence $\phi$ is \'etale at $1*(0,y)$ by  \cite[Proposition 2.9]{milne}.
By \cite[Theorem 6.2]{BR}  there exists $f \in k[\Vb]^{\G}$ such that $f(\G y)=1$ and for which the restriction of $\phi$ to the principal open subset 
$\left(\G\times^M\Vb_M\right)_{f\circ\phi}$  corresponding to $f\circ\phi\in \k[ \G\times^M\Vb_M]^{\G}$
 is a surjective \'etale morphism from $\left(\G\times^M\Vb_M\right)_{f\circ\phi}$ to the principal open subset $U_{\mathfrak{gl}_n}:=\Vb_f$ of $\Vb$ associated with $f$.  
 Now,  $f\circ\phi\in\k[ \G\times^M\Vb_M]^{\G}=\k[\Vb_M]^M$.  
 The statement follows taking   $U_{\mathfrak{m}}$  to be the corresponding principal open subset in $\Vb_M$,   so that  $\left(\G\times^M\Vb_M\right)_{f\circ\phi}=\G\times^MU_{\mathfrak{m}}$.
\end{proof}

\begin{remark}
{\rm It was already observed in \cite{broer2} that for the adjoint action of $\G$, the geometry of  ordinary Jordan (decomposition) classes and varieties is independent of the characteristic of $\k$.}
\end{remark}

\begin{lemma}\label{lem:intersection}Let $M$ be a Levi subgroup of a parabolic subgroup of  $\G$,  let $\mathfrak{m}={\rm Lie}(M)$ and let $\mathfrak{J}$ be an enhanced  Jordan class intersecting $(\k^n,\mathfrak{m}^\circ)$. 
Let $\mathfrak{J}_{M,1},\,\ldots,\,\mathfrak{J}_{M,r}$  be enhanced Jordan classes  for the $M$-action satisfying $\mathfrak{J}\cap \mathfrak{J}_{M,i}\cap (\k^n,\mathfrak{m}^\circ)\neq\emptyset$.

Then,  
\begin{enumerate}
\item $\mathfrak{J}\subset \G(\k^n+\mathfrak{m}^\circ)$.
\item $(\k^n,\mathfrak{m}^\circ)\cap \mathfrak{J}= (\k^n,\mathfrak{m}^\circ)\cap\left(\bigcup_{i=1}^r\mathfrak{J}_{M,i}\right)$ 
\item Let $(v,y)=(v,y_n)+(0,y_s)\in \mathfrak{J}$. Then 
\begin{equation*} 
(\k^n,(\gl_n^{y_s})^\circ)\cap\mathfrak{J}=(\k^n,(\gl_n^{y_s})^\circ)\cap \left((0,\mathfrak{z}(\gl_n^{y_s})^{\G^{y_s}\operatorname{-reg}})+N_{\G}(\G^{y_s})(v,y_n)\right).\end{equation*}
\end{enumerate}
\end{lemma}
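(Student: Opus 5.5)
The plan is to work with explicit representatives. Each class is $\G$-stable, so statements are checked up to conjugation, using \eqref{eq:enhanced-jordan-class}: $\mathfrak J=\G(v,\mathfrak z(\gl_n^{x_s})^{\Gbreg}+x_n)$ for a fixed $(v,x)=(v,x_n)+(0,x_s)\in\mathfrak J\cap(\k^n,\mcirc)$. The two facts used repeatedly are the equality $\mcirc=\{x\in\mathfrak m\mid \G^{x_s}\subseteq M\}$ and equation \eqref{eq:regM=regG}.

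For (1), I fix $(v,x)\in\mathfrak J\cap(\k^n,\mcirc)$, so that $\G^{x_s}\subseteq M$. A typical element of $\mathfrak J$ is $g(v,z+x_n)$ with $\gl_n^z=\gl_n^{x_s}$ and $z$ semisimple. Then $z+x_n$ is the Jordan decomposition, since $x_n$ commutes with $x_s$ and hence lies in $\gl_n^{x_s}=\gl_n^z$. Moreover $\G^z=\G^{x_s}\subseteq M$. Because $M$ is a Levi subgroup, it is the centraliser of its connected centre; so $\mathfrak m\supseteq\gl_n^{x_s}\ni z,x_n$. Hence $z+x_n\in\mcirc$, and $\mathfrak J\subseteq\G(\k^n,\mcirc)$.

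For (3), I set $L=\G^{y_s}$, which is a Levi subgroup, and take $M=L$. The inclusion ``$\supseteq$'' holds because for $z\in\mathfrak z(\mathfrak l)^{L\text{-reg}}$ with $z+y_n'\in\mathfrak l^\circ$, where $(v',y_n')=n(v,y_n)$ and $n\in N_\G(L)$, equation \eqref{eq:regM=regG} gives $\gl_n^z=\mathfrak l$. Then $(v',z+y_n')$ is Jordan equivalent to $n(v,y)$, which lies in $\mathfrak J$. For ``$\subseteq$'', let $(w,u)\in(\k^n,\mathfrak l^\circ)\cap\mathfrak J$. Then $\G^{u_s}\subseteq L$, and $(w,u)=g(v,z+y_n)$ with $\gl_n^z=\mathfrak l$. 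So $u_s=gzg^{-1}$ and $\G^{u_s}=gLg^{-1}\subseteq L$; by dimension and connectedness $g\in N_\G(L)$. Hence $u_s\in\mathfrak z(\mathfrak l)$, and $u_s$ is $L$-regular because $\G^{u_s}=L$. Also $(w,u_n)=g(v,y_n)\in N_\G(L)(v,y_n)$.

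For (2), I first show ``$\subseteq$''. Take $(w,u)\in(\k^n,\mcirc)\cap\mathfrak J$ and look at its $M$-Jordan class $\mathfrak J_M(w,u)$, computed with the same decomposition $u=u_s+u_n$, which is valid inside $\mathfrak m$. This class meets $\mathfrak J\cap(\k^n,\mcirc)$ at $(w,u)$ itself, so it is one of the $\mathfrak J_{M,i}$. For ``$\supseteq$'', let $(w,u)\in(\k^n,\mcirc)\cap\mathfrak J_{M,i}$, and choose a point $(w_0,u_0)$ in $\mathfrak J\cap\mathfrak J_{M,i}\cap(\k^n,\mcirc)$. Then $(w,u)=m(w_0,z+u_{0,n})$ with $m\in M$ and $\mathfrak m^z=\mathfrak m^{u_{0,s}}$. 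Since both $u$ and $u_0$ lie in $\mcirc$, equation \eqref{eq:regM=regG} upgrades this to $\gl_n^z=\gl_n^{u_{0,s}}$. Therefore $(w,u)$ is $\G$-Jordan equivalent to $(w_0,u_0)\in\mathfrak J$.

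The main obstacle is precisely this upgrade from $M$-regularity to $\G$-regularity. It requires checking that $z$ itself satisfies $\G^{z}\subseteq M$, which follows because $z+u_{0,n}$ is conjugate by $m^{-1}\in M$ to an element of $\mcirc$, and $\mcirc$ is $M$-stable. It also requires that \eqref{eq:regM=regG} is applied with $y=u_{0,s}\in\mcirc$.
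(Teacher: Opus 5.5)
Your proposal is correct and follows essentially the same route as the paper: explicit representatives $g(v,z+x_n)$ of the class, the characterisation \eqref{eq:regM=regG} to upgrade $M$-regularity to $\G$-regularity for elements of $\mathfrak{m}^\circ$, and the dimension-plus-connectedness argument forcing $g\in N_{\G}(\G^{y_s})$ in (3). The only difference is cosmetic. You prove (3) directly from the definition of $\mathfrak{J}$, whereas the paper specialises (2) to $\mathfrak{m}=\gl_n^{y_s}$; your version is, if anything, slightly cleaner.
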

\begin{proof}
(1) Let $(v,x)=(v,x_s+x_n)\in (\k^n,\mathfrak{m}^\circ)\cap\mathfrak{J}$. Then,  $\mathfrak{gl}_n^{x_s}\subseteq\mathfrak{m}$ and  $x_s\in \mathfrak{z}(\gl_n^{x_s})$, which is contained in every maximal torus of $\gl_n^{x_s}$ and therefore in some maximal torus of $\mathfrak{m}$. 
Hence, \eqref{eq:regM=regG} gives
$\mathfrak{z}(\gl_n^{x_s})^{\Gbreg}=\{z\in\mathfrak{gl}_n \mid \gl_n^{x_s}=\gl_n^z\}\subseteq\mathfrak{m}^\circ$. Since $\k^n,\mathfrak{m}^\circ$ is saturated, 
 $\mathfrak{J}\subset \G(\k^n,\mathfrak{m}^\circ)$. 
 
 (2) The inclusion $\subseteq$ is immediate from the definition of the $\mathfrak{J}_{M,i}$. We  prove $\supseteq$. Let  $\mathfrak{J}_{M,i}$ for some $i\in\{1,\ldots,\,r\}$. By assumption, 
 there is  $(v,x)=(v,x_s+x_n)\in \mathfrak{J}_{M,i}\cap\mathfrak{J}\cap (\k^n,\mathfrak{m}^ \circ)$. Using  $M$-stability of $(\k^n,\mathfrak{m}^ \circ)$ twice and \eqref{eq:regM=regG} we obtain:
\begin{align*}
 (\k^n,\mathfrak{m}^\circ)\cap\mathfrak{J}_{M,i}&= (\k^n,\mathfrak{m}^\circ)\cap\left(M\left(v,\mathfrak{z}(\mathfrak{m}^{x_s})^{M\operatorname{-reg}}+x_n\right)\right)\\
&=M\left((\k^n,\mathfrak{m}^\circ)\cap\left(v,\mathfrak{z}(\mathfrak{m}^{x_s})^{M\operatorname{-reg}}+x_n\right)\right)\\
&=M\left((\k^n,\mathfrak{m}^\circ)\cap\left(v,\mathfrak{z}(\mathfrak{m}^{x_s})^{\Gbreg}+x_n\right)\right)\\
&=(\k^n,\mathfrak{m}^\circ)\cap\left(M\left(v,\mathfrak{z}(\mathfrak{m}^{x_s})^{\Gbreg}+x_n\right)\right)\subseteq(\k^n,\mcirc)\cap\mathfrak{J}.
\end{align*}

(3) Let $(v_z,z_s+z_n)\in (\k^n,(\mathfrak{gl}_n^{y_s})^\circ)\cap \mathfrak{J}$. Then, $g \G^{z_s}g^{-1}=\G^{y_s}$  for some $g\in \G$. Now, $z_s\in (\mathfrak{gl}_n^{y_s})^\circ$ and $\G^{y_s}$ and $\G^{z_s}$ are connected. Therefore for dimension reasons $\G^{z_s}=\G^{y_s}$  and so $g\in N_{\G}(\G^{y_s})$. 

Then (2) for $\mathfrak m=\gl_n^{y_s}$ gives
\begin{align*}
(\k^n,  (\mathfrak{gl}_n^{y_s})^\circ)\cap\mathfrak{J}&\subseteq N_{\G}(\G^{y_s})\left(v,\mathfrak{z}(\mathfrak{gl}_n^{y_s})^{\G^{y_s}\operatorname{-reg}}+y_n\right)\\
&=(0,\mathfrak{z}(\mathfrak{gl}_n^{y_s})^{\G^{y_s}\operatorname{-reg}})+N_{\G}(\G^{y_s})(v,y_n).\end{align*}
The other inclusion follows from \eqref{eq:regM=regG} applied to $M=\G^{y_s}$. 
\end{proof}

\begin{proposition}\label{prop:enhanced-loc-closed}Enhanced Jordan classes are locally closed in $\Vb$ and smooth. 
\end{proposition}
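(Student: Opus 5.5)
Fix an enhanced Jordan class $\J=\J(v,y)$ with JKV decomposition $(v,y)=(0,y_s)+(v,y_n)$, and set $M:=\G^{y_s}$, a Levi subgroup with $\mathfrak m=\gl_n^{y_s}$. The plan is to reduce, via the étale slice of Lemma~\ref{lem:slice-etale}, to a statement about a set that is visibly a product in $\Vb_M$. By Lemma~\ref{lem:intersection}(3),
\[
(\k^n,\mcirc)\cap\J=(\k^n,\mcirc)\cap\Big((0,\mathfrak z(\mathfrak m)^{M\operatorname{-reg}})+N_\G(M)(v,y_n)\Big).
\]
Since $N_\G(M)/M$ is finite, $N_\G(M)(v,y_n)$ is a finite union of $M$-orbits $M(v_j,y_{n,j})$ in $\cN(\Vb_M)$. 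All of these orbits have the same dimension, so each is open in the union and the union is disjoint. Each such orbit is locally closed and smooth.

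Next I will use the product structure $\Vb_M=(0,\mathfrak z(\mathfrak m))\oplus(\k^n\oplus[\mathfrak m,\mathfrak m])$, on which $M$ acts trivially on the first factor. Under this decomposition the set $(0,\mathfrak z(\mathfrak m)^{\reg})+M(v_j,y_{n,j})$ is the product of a locally closed set with an orbit, hence locally closed and smooth in $\Vb_M$. Intersecting with the open set $(\k^n,\mcirc)$ shows that $Z:=(\k^n,\mcirc)\cap\J$ is a locally closed, smooth, $M$-stable subvariety of $\Vb_M$.

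Now apply Lemma~\ref{lem:slice-etale} to $y_s$ to get a saturated principal open $U_{\mathfrak m}\ni(0,y_s)$ and an étale surjection $\phi\colon\G\times^MU_{\mathfrak m}\to U_{\gl_n}$. I can shrink so that $U_{\mathfrak m}\subseteq(\k^n,\mcirc)$, because $(\k^n,\mcirc)$ is itself saturated, open and contains $(0,y_s)$. The key claim is
\[
\phi^{-1}(\J\cap U_{\gl_n})=\G\times^M(Z\cap U_{\mathfrak m}).
\]
The inclusion $\supseteq$ is clear. For $\subseteq$, take $g*(w,x)$ with $g(w,x)\in\J$; then $(w,x)\in\J\cap(\k^n,\mcirc)=Z$, using $\G$-stability of $\J$. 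The right-hand side is locally closed and smooth, being a bundle over $\G/M$ with smooth locally closed fibre. Since $\phi$ is étale, hence open, and surjective onto $U_{\gl_n}$, the image $\J\cap U_{\gl_n}=\phi(\G\times^M(Z\cap U_{\mathfrak m}))$ is smooth. Smoothness descends along étale surjections, since the restriction of an étale map to a preimage is étale.

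Local closedness also descends. Write $\G\times^M(Z\cap U_{\mathfrak m})=A\cap B$ with $A$ open and $B$ closed, both $\G$-stable. Then $\phi^{-1}(\J\cap U)$ is open in its closure, and I will use that an étale surjective map is open and universally submersive to conclude that $\J\cap U$ is open in $\overline{\J\cap U}\cap U$. This point-set step is where I expect to need the most care. I would first try to argue that the complement $\overline{\J}\cap U\setminus\J$ has preimage equal to the closed set $\overline{\phi^{-1}(\J)}\setminus\phi^{-1}(\J)$, using that $\phi$ is open so that $\phi^{-1}(\overline{S})=\overline{\phi^{-1}(S)}$. The image of a closed saturated set under a surjective open map is closed.

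Finally, $\J$ is covered by the opens $gU_{\gl_n}$ as $y_s$ varies. In fact one $U_{\gl_n}$ suffices, since it is saturated and contains $(v,y)$, and $\J=\G(v,\ldots)$ meets it in a neighbourhood of each of its points up to translation. This gives local closedness and smoothness globally.
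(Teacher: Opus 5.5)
Your route is the paper's: use the étale map of Lemma~\ref{lem:slice-etale}, describe $Z=(\k^n,\mcirc)\cap\J$ via Lemma~\ref{lem:intersection}(3), and pull back to $\G\times^M Z$. Your identification of $\phi^{-1}(\J\cap U_{\gl_n})$ is correct. So is your point-set descent: for open $\phi$ one has $\phi^{-1}(\overline S)=\overline{\phi^{-1}(S)}$, and a surjective open map sends saturated closed sets to closed sets. Both make explicit what the paper only asserts.

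\textbf{The gap is the smoothness of $Z$.} For $\Vb$ the paper allows arbitrary characteristic. If $\chr\k=p$ divides a block size $\lambda_i$ of $M\simeq\GL_\lambda$, then $\Id_{\lambda_i}\in\mathfrak{sl}_{\lambda_i}$, so $\Vb_M\neq(0,\mathfrak z(\mathfrak m))\oplus(\k^n\oplus[\mathfrak m,\mathfrak m])$. Moreover, the addition map $\mathfrak z(\mathfrak m)\times O\to\mathfrak z(\mathfrak m)+O$ is then a bijection but in general not an isomorphism. For example, take $\gl_2$ in characteristic $2$ and $O$ the regular nilpotent orbit. Then $\k\Id+O=\mathfrak{sl}_2\setminus\k\Id$. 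The semisimple part of $\left(\begin{smallmatrix}a&b\\c&a\end{smallmatrix}\right)$ is $t\Id$ with $t^2=a^2+bc$, and $a^2+bc$ is not a square in $\k[a,b,c]$, so $t$ is not a regular function. Hence the ``product of a locally closed set with an orbit'' description does not give smoothness.

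The repair is short. The set $(0,\mathfrak z(\mathfrak m))+N_\G(M)(v,y_n)$ is a single orbit of the algebraic group $\mathfrak z(\mathfrak m)\rtimes N_\G(M)$, acting on $\Vb_M$ by $(z,g)\cdot(w,x)=(gw,gxg^{-1}+z)$. It is therefore smooth and locally closed. Since $\mathfrak z(\mathfrak m)^{M\operatorname{-reg}}=\mathfrak z(\mathfrak m)$, the set $Z$ is its intersection with the open set $(\k^n,\mcirc)$. This also makes your separate argument about the finitely many $M$-orbits unnecessary.

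\textbf{Drop the claim that one $U_{\gl_n}$ suffices.} Lemma~\ref{lem:slice-etale} only provides $U_{\gl_n}=\Vb_f$ with $f(y_s)\neq0$. The function $f$ may vanish at other points of $\mathfrak z(\mathfrak m)^{\reg}$, so $\J\not\subseteq U_{\gl_n}$ in general. Your covering argument is what is needed: apply the lemma at every $z\in\mathfrak z(\mathfrak m)^{\reg}$ and use that both properties are local. Alternatively, a differential computation as in Lemma~\ref{lem:slice-etale}, using $[\gl_n,x]+\mathfrak m=\gl_n$ for $x\in\mcirc$, shows that $\phi$ is étale on all of $\G\times^M(\k^n,\mcirc)$. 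Its open image $\G(\k^n,\mcirc)$ then contains $\J$.
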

\begin{proof}Let $\mathfrak{J}$ be the enhanced Jordan class of some $(v,x)=(0,x_s)+(v,x_n)$.   We first show that it is locally closed in $\Vb$.  Setting $M=G^{x_s}$ we have $x\in\mathfrak{m}^\circ$.  Let $U_{\mathfrak m}$, $U_{\mathfrak{gl}_n}$  and $\phi\colon \G\times^MU_{\mathfrak m}\to U_{\mathfrak{gl}_n}$ be the saturated principal open subsets of $\Vb_M$ and $\Vb$ and  the surjective \'etale morphism as in Lemma \ref{lem:slice-etale}.  Here $(v,x)\in U_{\mathfrak m}$ and so  $\mathfrak{J}\subset U_{\mathfrak{gl_n}}$.  It is enough to show that $\mathfrak{J}$ is locally closed  in $U_{\mathfrak{gl}_n}$. 
 This holds if and only if its preimage through $\phi$,  namely $\G\times^M\left(\mathfrak{J}\cap U_{\mathfrak m}\right)$ is locally closed in $\G\times^M U_{\mathfrak m}$, which is, in turn, equivalent to requiring that $\mathfrak{J}\cap U_{\mathfrak m}$ is locally closed in   $U_{\mathfrak m}$.
The latter is locally closed if and only if $(\k^n\mathfrak{m}^\circ)\cap\mathfrak{J}\cap U_{\mathfrak m}$ is locally closed in $(\k^n,\mathfrak{m}^\circ)\cap U_{\mathfrak m}$  if and only if $(\k^n,\mathfrak{m}^\circ)\cap\mathfrak{J}$ is locally closed in  $(\k^n,\mathfrak{m}^\circ)$. 
This last assertion follows from Lemma \ref{lem:intersection} (3). 
 Similarly, $\mathfrak{J}$ is smooth at $(v,x)$ if and only if $\G\times^M\left(\mathfrak{J}\cap U_{\mathfrak m}\right)$  is smooth at $1*(v,x)$ if and only if $\mathfrak{J}\cap U_{\mathfrak m}$ is smooth at $(v,x)$. 
 Intersecting once more with $(\k^n,\mathfrak{m}^\circ)$ we apply again Lemma   \ref{lem:intersection} (3), obtaining a disjoint union of smooth locally closed subsets.\end{proof}

\begin{proposition}\label{prop:dimension-enhanced} Let $\mathfrak{J}$ be the enhanced Jordan class of  $(v,x)=(0,x_s)+(v,x_n)$.   Then 
\begin{equation}\label{eq:dimension-enhanced}
\dim \mathfrak{J}=\dim \G(v,x)+\dim{\mathfrak z}(\gl_n^{x_s}).
\end{equation}
\end{proposition}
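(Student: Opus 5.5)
The plan is to reduce to the local description of $\mathfrak J$ near $(v,x)$ given by the étale slice of Lemma~\ref{lem:slice-etale} and by Lemma~\ref{lem:intersection}(3). Set $M=\G^{x_s}$, $\mathfrak m=\gl_n^{x_s}$, so $x\in\mathfrak m^\circ$. The proof of Proposition~\ref{prop:enhanced-loc-closed} already shows that $\mathfrak J\cap U_{\mathfrak{gl}_n}=\mathfrak J$ has preimage $\G\times^M(\mathfrak J\cap U_{\mathfrak m})$ under the étale surjection $\phi$. Étale morphisms preserve dimension, so we get
\[
\dim\mathfrak J=\dim \G\times^M(\mathfrak J\cap U_{\mathfrak m})=\dim\G-\dim M+\dim(\mathfrak J\cap U_{\mathfrak m}).
\]
Since $(\k^n,\mathfrak m^\circ)$ is open in $\Vb_M$ and meets $\mathfrak J\cap U_{\mathfrak m}$ (it contains $(v,x)$), we may also replace $\mathfrak J\cap U_{\mathfrak m}$ by $(\k^n,\mathfrak m^\circ)\cap\mathfrak J$, after checking that every irreducible component of $\mathfrak J\cap U_{\mathfrak m}$ meets this open set; the next paragraph gives the dimension of the latter.

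By Lemma~\ref{lem:intersection}(3), $(\k^n,\mathfrak m^\circ)\cap\mathfrak J$ equals $(\k^n,\mathfrak m^\circ)\cap\bigl((0,\mathfrak z(\mathfrak m)^{M\operatorname{-reg}})+N_{\G}(M)(v,x_n)\bigr)$. Here $N_\G(M)(v,x_n)$ is a finite union of $M$-orbits, because $N_\G(M)/M$ is finite by Remark~\ref{rem:normalisers}, and all of them have dimension $\dim M(v,x_n)$. The summation map $\mathfrak z(\mathfrak m)\times \cN(\Vb_M)\to\Vb_M$ is injective, since the first factor is the semisimple part and the second the nilpotent part of the JKV decomposition in $\Vb_M$. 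Hence this set has dimension $\dim\mathfrak z(\mathfrak m)+\dim M(v,x_n)$, the regular locus being open dense in $\mathfrak z(\mathfrak m)$ and intersection with $\mathfrak m^\circ$ being open and nonempty.

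Combining the two paragraphs gives
\[
\dim\mathfrak J=\dim\G-\dim M+\dim M(v,x_n)+\dim\mathfrak z(\mathfrak m).
\]
Finally, $\G^{(v,x)}=\G^{x_s}\cap\G^{(v,x_n)}=M^{(v,x_n)}$, so
\[
\dim\G(v,x)=\dim\G-\dim M^{(v,x_n)}=\dim\G-\dim M+\dim M(v,x_n),
\]
which yields \eqref{eq:dimension-enhanced}.

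The step I expect to be the main obstacle is justifying that the dimension of $\mathfrak J\cap U_{\mathfrak m}$ equals that of its intersection with $(\k^n,\mathfrak m^\circ)$. The issue is that $\mathfrak J\cap U_{\mathfrak m}$ might a priori have components outside $\mathfrak m^\circ$. I would try to avoid this by a direct count. $\mathfrak J$ is irreducible and equals the image of $\G\times(v,\mathfrak z(\mathfrak m)^{\rm reg}+x_n)$. I would compute the generic fibre of this map over a point $(v,z+x_n)$: it consists of those $g$ with $g^{-1}(v,z+x_n)$ in the same slice, and such $g$ lie in $N_\G(M)$ and stabilise $(v,x_n)$ up to a finite ambiguity. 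So the fibre has dimension $\dim M^{(v,x_n)}$, which gives the same formula without any appeal to the étale slice.
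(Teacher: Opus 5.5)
Your fallback in the last paragraph is the paper's proof. The paper takes the dominant action map $\G\times(v,\mathfrak z(\gl_n^{x_s})^{\Gbreg}+x_n)\to\overline{\mathfrak J}$ and uses equivariance to place a generic point at $(v,z+x_n)$ in the slice. It then shows the fibre there is a finite union of translates of $\G^{(v,z+x_n)}=\G^{(v,x)}$. The only difference is bookkeeping. The paper bounds the finite ambiguity by $|\G z\cap\mathfrak z(\gl_n^{x_s})|<\infty$. You instead note that the relevant $g$ normalise $M=\G^z$ and fix $(v,x_n)$ exactly, by uniqueness of the Jordan decomposition, so finiteness comes from $N_{\G}(M)/M$.

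Your main route through Lemma~\ref{lem:slice-etale} and Lemma~\ref{lem:intersection}(3) is genuinely different, and the obstacle you flag does not arise. The differential of $\phi$ at $1*(w,y)$ is surjective (equivalently bijective, by dimensions) iff $[\gl_n,y]+\mathfrak m=\gl_n$. This holds iff $\operatorname{ad}y$ is invertible on the $M$-stable complement $[\gl_n,x_s]$ of $\mathfrak m$, iff $\gl_n^{y_s}\subseteq\mathfrak m$, iff $y\in\mathfrak m^\circ$. So \'etaleness of $\phi$ on $\G\times^M U_{\mathfrak m}$ already forces $U_{\mathfrak m}\subseteq(\k^n,\mathfrak m^\circ)$. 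Hence $\mathfrak J\cap U_{\mathfrak m}$ is a nonempty open subset of $(\k^n,\mathfrak m^\circ)\cap\mathfrak J$. That set is a finite disjoint union of irreducible pieces $(0,\mathfrak z(\mathfrak m)^{\Gbreg})+Mg(v,x_n)$ with $g\in N_{\G}(M)$, all of dimension $\dim\mathfrak z(\mathfrak m)+\dim M(v,x_n)$, so your count goes through.

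You also do not need $\mathfrak J\subseteq U_{\gl_n}$. The slice lemma does not obviously give this, since the invariant $f$ may vanish elsewhere on $\pi_{\Vb,\G}(\mathfrak J)$. A nonempty open subset of the irreducible $\mathfrak J$ has the same dimension, which is all you use.

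As for what each approach buys: the direct count is elementary and needs only a Weyl-group-type finiteness. The slice argument reuses the local model from the smoothness proof and gives the local dimension of $\mathfrak J$ at each point. Its price is invoking Bardsley--Richardson together with the \'etale-locus computation above.
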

\begin{proof}We consider the dominant action morphism 
\begin{align*}\mu\colon \G\times \left(v,{\mathfrak z}(\gl_n^{x_s})^{\Gbreg}+x_n\right)&\to\overline{\mathfrak J}\\
\left(g,(v,y_s+x_n)\right)&\mapsto g(v,y_s+x_n).
\end{align*}
There is a non-empty open subset $U$ of $\overline{\mathfrak J}$ such that for any $u\in U$
\begin{align*}\dim{\mathfrak J}&=\dim\left(\G\times \left(v,{\mathfrak z}(\gl_n^{x_s})^{\Gbreg}+x_n\right)\right)-\dim \mu^{-1}(u)\\
&=\dim \G+\dim {\mathfrak z}(\gl_n^{x_s})-\dim \mu^{-1}(u).
\end{align*}
We estimate $\dim\mu^{-1}(u)$.  By irreducibility of $\overline{\mathfrak J}$,  density of $\mathfrak J$  and $\G$-equivariance of $\mu$ 
we may choose $u$ to lie in $\left(v,{\mathfrak z}(\gl_n^{x_s})^{\Gbreg}+x_n\right)$, so $u=(v,z+x_n)$ for some $z\in {\mathfrak z}(\gl_n^{x_s})^{\Gbreg}$. Let $S=\G(v,z+x_n)\cap \left(v,{\mathfrak z}(\gl_n^{x_s})^{\Gbreg}+x_n\right)$.  Then 
\[ N:=|S|\leq \left|\G z\cap {\mathfrak z}(\gl_n^{x_s})\right|<\infty\] because there are only finitely many $\G$-conjugates of a given element in a maximal toral subalgebra of $\gl_n$. 
Let then $S=\{s_1,\,\ldots,\, s_N\}$.  For $j=1,\,\ldots, N$, let $g_j\in \G$ be such that $s_j=g_j(v,z+x_n)$.  Then
\[\mu^{-1}(u)=\bigcup_{j=1}^N\left\{ (hg_j,s_j)\in \G\times\left(v,{\mathfrak z}(\gl_n^{x_s})^{\Gbreg}+x_n\right) ~|~h\in \G^{(v,z+x_n)}\right\}\]
so $\dim\mu^{-1}(u)=\dim  \G^{(v,z+x_n)}=\dim  \G^{(v,x)}$,  giving the statement. 
\end{proof}

\subsection{Geometry of exotic Jordan classes}
In this Section we turn our attention to the geometry of  exotic Jordan classes, showing that they are locally closed and smooth.

\begin{proposition}
\begin{enumerate}
\setlength{\itemsep}{4pt}
\item Exotic Jordan classes are locally closed.
\item Let $(v,x)=(0,x_s)+(v,x_n)\in\V$. Then
\begin{equation}
\label{eq:dimension-exotic}
\dim J(v,x)=\dim G(v,x)+\dim \V^{G^{x_s}}.\end{equation}
\end{enumerate}
\end{proposition}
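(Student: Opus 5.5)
The plan is to transport everything from the enhanced side via the map $\varphi$, using Theorem~\ref{thm:main-bijection}(1) and Theorem~\ref{prop:double-dim}, rather than redoing the étale slice argument for $\Sp_{2n}$.

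For (1), let $J$ be an exotic Jordan class. By Theorem~\ref{thm:main-bijection}(1) we have $J=\varphi(G\times^{\G}\J)=G\J$ for the enhanced class $\J=J\cap\Vb$. By Proposition~\ref{prop:enhanced-loc-closed}, $\J$ is locally closed in $\Vb$, so $\J=\overline{\J}\setminus C$ for some closed $C\subseteq\Vb$. I would instead write $J$ as $\overline{J}\setminus\overline{J'}$, where $J'$ is the union of exotic classes contained in $\overline{J}$ other than $J$; this union is finite by Proposition~\ref{prop:basic-properties}(2).

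To see that this works, I need two facts. First, $\overline{J}\cap\Vb$ is a union of enhanced classes. Second, $\overline{G\J}\cap\Vb=\overline{\J}$ up to enhanced classes. Here is how I would argue the second. Take $(w,y)\in\Vb$ with $\varphi(w,y)\in\overline{G\J}$. The order-reflecting argument in Theorem~\ref{prop:double-dim}(2), which reduces via $\psi$ and Remark~\ref{rem:normalisers} to the semisimple part and then uses Lemma~\ref{lem:poset-nil}, should adapt to show that $(w,y)\in\overline{\G\J}$.

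I expect this closure comparison to be the main obstacle. The order-reflecting argument there handles single orbits, while here we have a family with varying semisimple part. As a fallback, I would apply Lemma~\ref{lem:slice-etale} to the saturated open set $U_{\mathfrak{gl}_n}$ and pass through Remark~\ref{rem:saturation} to a saturated open set $\widetilde U\subseteq\V$ containing $J$. I would then show that $J$ is closed in $\widetilde U\cap\overline{J}$ minus the lower-dimensional classes. This uses that $\pi_{\V,G}(J)$ is locally closed, being the image of the corresponding piece of $\mathfrak z(\gl_n^{x_s})$ in $\Vb/\!/\G\simeq\V/\!/G$ by Theorem~\ref{T:quotients}(1). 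I would also use that within a fibre of $\pi_{\V,G}$ there are finitely many orbits (Theorem~\ref{T:quotients}(3)), of which $J$ contains a single one in each fibre.

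For (2), I would compute directly, as in Proposition~\ref{prop:dimension-enhanced}. Consider the action map $\mu\colon G\times(v,(\V^{G^{x_s}})^{\Greg}+x_n)\to\overline{J}$. By Remark~\ref{rem:reg-ss}, its generic fibre over $u=(v,z+x_n)$ is a finite union of translates of $G^{u}$. Finiteness holds because $G$-conjugates of $z$ inside the diagonal space $\V^{G^{x_s}}$ correspond to finitely many Weyl group permutations, by Lemma~\ref{lem:semisimple-elements}(2) and Remark~\ref{rem:normalisers}. Moreover $\dim G^u=\dim G^{(v,x)}$, since orbit dimensions are constant on $J$ by Proposition~\ref{prop:basic-properties}(2). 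This gives $\dim J=\dim G+\dim\V^{G^{x_s}}-\dim G^{(v,x)}$, which is \eqref{eq:dimension-exotic}.

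As a consistency check, $\dim J=\dim G\J$ should equal $2\dim\G(v,x)+\dim\mathfrak z(\gl_n^{x_s})$ by Theorem~\ref{prop:double-dim}(3) and Remark~\ref{rem:reg-ss}. This agrees with the formula above, because $\V^{G^{x_s}}\simeq\mathfrak z(\gl_n^{\mathbf x_s})$.
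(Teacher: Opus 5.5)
\textbf{Part (1) has a genuine gap.} Your main route depends on $\overline{G\J}\cap\Vb\subseteq\overline{\J}$, equivalently on $\overline{J}$ being a union of exotic classes, and you leave this open.

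The order-reflecting argument of Theorem~\ref{prop:double-dim}(2) does not adapt. Its first step uses that the invariants are constant on the closure of a \emph{single} orbit. That forces the two semisimple parts to be $\GL_{2n}$-conjugate, so they can be taken equal. A point of $\overline{G\J}$ only has a semisimple part that is a limit of the varying elements of $\mathfrak z(\gl_n^{x_s})^{\reg}$, so no such reduction is available. Nor is $G\overline{\J}=\varphi(G\times^{\G}\overline{\J})$ automatically closed, since $G/\G$ is affine, not complete. In the paper this closure comparison is Theorem~\ref{thm:final-poset-bijection}(1)--(2). It comes later and needs the induction description of closures (Propositions~\ref{prop:closure-general} and~\ref{prop:closure-double}).

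Your fallback does not repair this, for three reasons. It is only sketched. Lemma~\ref{lem:slice-etale} is a slice for $(\Vb,\G)$, not $(\V,G$). And the claim that $J$ meets each fibre of $\pi_{\V,G}$ in a single orbit is false. Take $n=2$, $a\neq b$ and $x=\operatorname{diag}(a,b,a,b)$. The class $J(e_1,x)$ contains $(e_1,\operatorname{diag}(b,a,b,a))$. Applying the block swap in $\GL_2\subset\Sp_4$, it also contains $(e_2,x)$. Now $(e_1,x)$ and $(e_2,x)$ lie in the same fibre but in different $G$-orbits, because $G^x$ preserves the eigenspaces of $x$.

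\textbf{The missing idea is to use $\psi$ rather than $\varphi$.} The restriction of $\psi$ to $1*\V$ is a closed embedding $\V\hookrightarrow\Vb_{2n}$. By Theorem~\ref{thm:main-bijection}(2), $\widetilde{\J}:=\psi(\GL_{2n}\times^G J)$ is an enhanced Jordan class of $\Vb_{2n}$ with $\widetilde{\J}\cap\V=J$. Proposition~\ref{prop:enhanced-loc-closed}, applied to $\GL_{2n}$, makes $\widetilde{\J}$ locally closed in $\Vb_{2n}$. Hence its trace $J$ on the closed subset $\V$ is locally closed. No information about closures is needed. This is exactly the paper's proof.

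\textbf{Part (2) is correct, but by a different route from the paper.} You run the argument of Proposition~\ref{prop:dimension-enhanced} directly on the exotic side. You use that $Gz\cap\V^{G^{x_s}}$ is finite and that each fibre of the action map is a finite union of $G^u$-cosets. This needs $z'+x_n$ to have semisimple part $z'$, which holds because $x_n$ commutes with $\mathfrak z(\gl_{2n}^{x_s})$. The paper instead pulls back along $\varphi\colon G\times^{\G}\overline{\J}\to\overline{J}$. It computes the fibre over a point of $\Vb$ as $G^{(v,x)}\times^{\G^{(v,x)}}\{(v,x)\}$, using that $G(v,x)\cap\Vb$ is a single $\G$-orbit, and then substitutes \eqref{eq:dimension-enhanced}. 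Your version is self-contained, while the paper's reuses the enhanced computation.
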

\begin{proof}
(1) We consider the morphism $\psi$ from \eqref{eq:inclusions}. Recall that $\psi(\V)$ is closed in $\Vb_{2n}$ and that the restriction of $\psi$ to $1*\V$ is an isomorphism onto its image. Let $J$ be an exotic Jordan class.
Then $J$ is locally closed in $\V$ if and only if $\psi(J)$ is locally closed in $\psi(\V)$.  By Theorem \ref{thm:main-bijection} (3)  there is  an  enhanced Jordan class $\mathfrak J$ in $\Vb_{2n}$ such that $\psi(J)=\psi(\V)\cap\mathfrak J$. Proposition \ref{prop:enhanced-loc-closed} ensures that $\mathfrak J$ is locally closed  in $\Vb_{2n}$, whence $\psi(J)=\psi(\V)\cap\mathfrak J$ is locally closed in $\psi(\V)$.

(2) By  Theorem \ref{thm:main-bijection} (1)  we may assume that $(v,x)=\varphi(v',x')$ for some $(v',x')\in\Vb$. We consider the dominant morphism of affine varieties  $G\times^{\G}\overline{\mathfrak J(v',x')}\to \overline{J(v,x)}$ given by the restriction of $\varphi$, that we still denote by $\varphi$ for simplicity. 
By density of $J(v,x)$ in $\overline{J(v,x)}$ and equivariance of $\varphi$, there is a point $z$ in $J(v,x)\cap \varphi(\Vb)$ such that 
$\dim G\times^{\G}\overline{\mathfrak J(v',x')}-\dim\varphi^{-1}(z)=\dim\overline{J(v,x)}$. Replacing the representative of $J(v,x)$ in $\varphi(\Vb)$, we may assume $z=(v,x)$. 
Then 
\begin{align*}
\varphi^{-1}(v,x)&=\varphi^{-1}\varphi(v',x')=\{g*(w,y)\in G\times^{\G}\overline{\mathfrak J(v',x')}~|~gw=v',\,gy=x'\}\\
&=G^{(v,x)}\times^{\G^{(v',x')}}(v',x').
\end{align*}
Let $x_s$ and $x_s'$ denote respectively the semisimple parts of $x$ and $x'$. Invoking Proposition \ref{prop:dimension-enhanced} and Remark \ref{rem:reg-ss} we obtain
\begin{align*}
\dim\overline{J(v,x)}&=\dim G\times^{\G}\overline{\mathfrak J(v',x')}-\dim\varphi^{-1}(v,x)\\
&=\dim G-\dim\G+\dim\mathfrak z(\gl_n^{x'_s})+\dim \G(v',x')-\dim G^{(v,x)}+\dim \G^{(v',x')}\\
&=\dim G {(v,x)}+\dim\V^{G^{x_s}}.
\end{align*}
\end{proof}

\begin{proposition}
Exotic Jordan classes are smooth.
\end{proposition}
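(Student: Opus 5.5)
The plan is to transport smoothness from the enhanced side using the bijection of Theorem~\ref{thm:main-bijection}(1). Let $J$ be an exotic Jordan class and $\mathfrak J=J\cap\Vb$ the corresponding enhanced class, so that $J=\varphi(G\times^{\G}\mathfrak J)=G\mathfrak J$. By Proposition~\ref{prop:enhanced-loc-closed}, $\mathfrak J$ is smooth, hence $G\times^{\G}\mathfrak J$ is smooth: it is a fibre bundle over the smooth variety $G/\G$ with smooth fibre $\mathfrak J$. Since $J$ is a single $G$-stable locally closed set whose orbits all have the same dimension, and $G$ acts transitively on nothing in general, we cannot directly use homogeneity. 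Instead we use the morphism $\varphi\colon G\times^{\G}\mathfrak J\to J$, which is surjective and $G$-equivariant.

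The key step is to show that $\varphi$ restricted to $G\times^{\G}\mathfrak J$ is smooth (or at least that $J$ is normal and has a smooth open dense subset whose $G$-translates cover $J$). I would argue as follows. First, $J$ is irreducible (Proposition~\ref{prop:basic-properties}), so by generic smoothness over $\k$ (in characteristic $0$; in positive characteristic replace this by the existence of a smooth open dense subset of the reduced variety $J$) the smooth locus $J^{\rm sm}$ is a nonempty open $G$-stable subset of $J$. It therefore suffices to show every $G$-orbit in $J$ meets $J^{\rm sm}$, i.e.\ that $J^{\rm sm}$ contains a point of the slice $(v,(\V^{G^{x_s}})^{\Greg}+x_n)$ for each choice of semisimple part. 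The slice is parametrised by the irreducible set $(\V^{G^{x_s}})^{\Greg}$, and translation along the centre is the obstacle: there is no group acting transitively on the semisimple parameters.

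To overcome this, I would reproduce the local argument of Proposition~\ref{prop:enhanced-loc-closed} directly for $\V$ rather than through $\varphi$. Namely, prove an exotic analogue of Lemma~\ref{lem:slice-etale}: with $M=G^{x_s}\simeq G_\lb$ and $\V_M=\k^{2n}\oplus(\gl_{2n}^{x_s}\cap\bigwedge^2\k^{2n})$, the map $G\times^M\V_M\to\V$ is étale at $1*(0,x_s)$ (tangent computation: the image of $[\sp_{2n},x_s]$ together with $\V_M$ spans $\V$, since $\bigwedge^2\k^{2n}=[\sp_{2n},x_s]\oplus(\bigwedge^2\k^{2n})^{x_s}$ for the semisimple $x_s$), and then apply \cite[Theorem 6.2]{BR} to obtain saturated principal opens on which it is étale surjective. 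Then smoothness of $J$ at $(v,x)$ reduces to smoothness of $J\cap U_M$, which by the analogue of Lemma~\ref{lem:intersection}(3) is, near $(v,x)$, a finite disjoint union of translates of $(0,(\V^{M})^{\Greg})+M(v,x_n)$, i.e.\ of an open subset of a vector space times an $M$-orbit, hence smooth. The main obstacle I expect is the exotic version of Lemma~\ref{lem:intersection}(3), in particular checking that the normaliser $N_G(M)$ moves the nilpotent datum only by finitely many components and that these are disjoint near $(v,x)$; this should follow from Remark~\ref{rem:normalisers} (finiteness of $N_G(M)/M$) and Remark~\ref{rem:reg-ss}.
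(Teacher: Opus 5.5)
Your third paragraph is essentially the paper's proof. It builds an exotic étale slice $G\times^{M}\V_M\to\V$ at $1*(0,x_s)$ from \cite[Theorem 6.2]{BR}, using $[\sp_{2n},x_s]+(\bigwedge^2\k^{2n})^{x_s}=\bigwedge^2\k^{2n}$, and then describes $J$ near $(v,x)$ inside $\V_M$ as $(0,(\V^{M})^{\Greg})+N_G(M)(v,x_n)$, a finite disjoint union of equidimensional smooth pieces. The differences are cosmetic: the paper gets this local description by transporting Lemma~\ref{lem:intersection}(3) from $\Vb$ through the open set $M(\k^{2n},\mathfrak m^\circ)$, which plays the role of your ``near $(v,x)$''; it proves the tangent identity by a weight-space count; and your first two paragraphs are a detour you rightly abandon.
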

\begin{proof} Let $(v,x)\in \V$ with Jordan decomposition $(v,x)=(0,x_s)+(v,x_n)$. We may assume that $(v,x)\in\varphi(\Vb)$,   and that $x_s$ is as in  \eqref{eq:ss-exotic}. We identify elements in $\Vb$ and $\varphi(\Vb)$. Let 
$\mathbf M=\G^{x_s}\simeq\GL_\lb$, $\mathfrak m={\rm Lie}(\mathbf M)$, $M:=G^{x_s}\simeq\Sp_{2\lb}$, and $\mcirc$ be as in \eqref{eq:mcirc}. Let $\V_M$ be the exotic module for $M$ and $\Vb_{\mathbf M}$ the enhanced module for $\mathbf M$. 
It is not hard to verify that 
\begin{equation}\label{eq:emme}
\widetilde{X} := \{ (w,y_s+y_n) \in \V~|~G^{y_s} \subseteq {M} \}=\pi_{\V_M, M}^{-1}\pi_{\Vb_{\mathbf M},\mathbf M}(\k^n,\mathfrak m^\circ)= M(\k^{2n},\mathfrak m^\circ)
\end{equation} so $\widetilde{X}$ is an $\pi_{\V_M,M}$-saturated  open subset of $\V_M$ by Remark \ref{rem:saturation}.   
By Lemma \ref{lem:intersection} (3) and \eqref{eq:normalisers} we have 
\begin{align*}
\widetilde{X}\cap J&=M\left((\k^{2n},\mathfrak m^\circ)\cap \mathfrak J(v,x)\right)\\
&= M\left((\k^{2n},\mathfrak m^\circ)\cap \left((0,\mathfrak z(\mathfrak m))^{\mathbf M-\reg}+N_{\G}(\mathbf M)(v,x_n)\right)\right)\\
&\subseteq \widetilde{X}\cap \left((0,\V_M)^{M-\reg}+N_G(M)(v,x_n)\right)
\end{align*} and equality of these sets follows from \eqref{eq:emme}.
Hence, $\widetilde{X}\cap J$ is a disjoint union of locally closed smooth subsets of $\V_M$. Therefore $\widetilde{X}\cap J\cap U$ is smooth for all open neighbourhoods $U$ of $(v,x)$ in  $\V_M$ 
and so $G\times^{M}(\widetilde{X}\cap J\cap U)$ is smooth for all open neighbourhoods $U$ of $(v,x)$ in  $\V_M$.  

The statement follows if we show that there is an $M$-stable neighbourhood ${U}_0$ of $(v,x)$ in $\V_M$ such that  the restriction to  $G\times^{M}{U}_0$ of the
morphism $\chi\colon G\times^{M}\V_M\to\V$  of smooth varieties given by $(g*(w,z))\mapsto(gv,gz)$
is smooth. Indeed, if this is the case, $G({U}_0\cap\widetilde X)\cap J$ is a smooth neighbourhood of $(v,x)$ in $J$. 

We verify  the hypotheses of  \cite[Theorem 6.2]{BR}  for  $\chi$ at $(v,x)$. Closure of orbits and injectivity are readily proved as we did in Lemma \ref{lem:slice-etale}. 
Let ${\Vb}_{\mathbf H}$ be the enhanced module for  $\mathbf H:=\GL_{2n}^{\psi(0,x_s)}$  obtained using \eqref{eq:inclusions} for each component of $\lb$.
We define the  morphisms:
\begin{align*}
\widetilde{\chi}\colon G\times \V_M&\to\V,&&&\xi\colon \GL_{2n}\times{\Vb}_{\mathbf H}&\to\Vb_{2n},\\
g*(w,z)&\mapsto(gv,gz),&&&g'*(w',z')&\mapsto(g'w',g'z').
\end{align*}
To prove invertibility of $d\chi$  at $(1*(0,x_s))$ it is enough to show surjectivity of $d_{(1,0,x_s)}\widetilde{\chi}$. This differential is obtained by restriction to $\sp_{2n}\oplus \V_M$ of 
 $d_{(1,\psi(0,x_s))}\xi$. Using the calculation of the differential in the proof of Lemma \ref{lem:slice-etale}, we need to show that 
 for any $(w,y)\in \psi(\V)$ there are
 $A\in\sp_{2n}$ and $(u,z)\in\V_M$ such that 
\begin{align*}(w,y)=d_{(1,\psi(0,x_s))}\xi(A,(u,z))=(u,[A,\psi(0,x_s)]+\psi(0,z)), 
\end{align*}
i.e., with the identifications \eqref{eq:inclusions} and  \eqref{eq:ss-exotic}, 
that $[\sp_{2n},x_s]+\sum_{i=1}^r\bigwedge^2\k^{2\lambda_i}=\bigwedge^2\k^{2n}$. 
We recall that, as a $G$-module $\gl_{2n}=\sp_{2n}+\bigwedge^2\k^{2n}$. 
Now,  the subspace $[\sp_{2n},x_s]$ in the $G$-module $\gl_{2n}$  is the sum  of weight spaces  corresponding to weights $\chi$ satisfying $\chi(x_s)\neq0$. In other words, the non-zero entries of its elements lie outside of the blocks of the matrices in \eqref{eq:centraliser-exotic}, so 
$\sum_{i=1}^r\bigwedge^2\k^{2\lambda_i}\cap [\sp_{2n},x_s]=0$. 
Since $\dim [\sp_{2n},x_s]+\dim \sum_{i=1}^r\bigwedge^2\k^{2\lambda_i}=\dim\bigwedge^2\k^{2n}$ we have the statement. 
\end{proof}

\section{Induction of orbits and closures of classes: enhanced case}
\label{sec:inductionandclosures-enhanced}

Here we prove the results of Theorem~C in the enhanced setting.

\subsection{Induction of central orbits and closure of central enhanced Jordan classes}

We define induction of orbits from the enhanced module $\Vb_L$ of a Levi subgroup $L$ of $\GL_n$ to $\Vb$. We begin with the definition of induction of the orbit of an element
of the form $(v,x_s)$ where $x_s$ is semisimple and centralised by $L$, and $v\in\k^n$.

If $P$ is any parabolic subgroup containing $L$ as a Levi factor then we say that $P$ is \emph{adapted} to $L$ and $v$ if
$$\overline{P v} = \overline{L v}.$$

Let $F$ be a partial flag of $\k^n$, which we write $0 \subsetneq \k^{d_1} \subsetneq \k^{d_2} \subsetneq \cdots \subsetneq \k^{d_m} = \k^n$, and let $P$ be the parabolic subgroup of $\G$ stabilising $F$. A splitting of the flag $F$ gives rise to a direct sum decomposition $\k^n \cong \bigoplus_i \k^{d_i - d_{i-1}}$ and a Levi factor $L \subseteq P$ stabilising it. The decomposition also subtends $v = \sum_i v_i$.
\begin{lemma}\label{lem:good-parabolic}
The parabolic $P$ is adapted to $L$ and $v$ if and only if $\{ 1\le i \le m \mid v_i \ne 0\} = \{1,2,...,k\}$ for some $1\le k \le m$. In particular, adapted parabolics exist.
\end{lemma}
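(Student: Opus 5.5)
Write $U_i := \k^{d_i - d_{i-1}}$ for the $i$-th summand of the splitting, so that $\k^{d_j} = U_1 \oplus \cdots \oplus U_j$. Then $L = \prod_i \GL(U_i)$, and $P = L \ltimes U_P$ with $U_P$ the maps sending $U_j$ into $\k^{d_{j-1}}$. The first step is to compute both closures explicitly. Since $L$ acts factorwise, $Lv = \prod_i \GL(U_i)v_i$. The $\GL(U_i)$-orbit of $v_i$ is $U_i\setminus\{0\}$ when $v_i\neq 0$ and $\{0\}$ otherwise. Hence
$$\overline{Lv} = \bigoplus_{i:\, v_i\neq 0} U_i .$$

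Next I compute $\overline{Pv}$. Let $j$ be the largest index with $v_j\ne 0$ (if $v=0$ everything is trivial). Then $v\in \k^{d_j}\setminus \k^{d_{j-1}}$. Since $P$ stabilises the flag, $Pv \subseteq \k^{d_j}\setminus\k^{d_{j-1}}$. Conversely, $P$ acts transitively on $\k^{d_j}\setminus \k^{d_{j-1}}$. To see this, use $\GL(U_j)\subseteq L$ to move the $U_j$-component of $v$ to any nonzero vector of $U_j$. Then use $U_P$, which contains all maps $U_j\to \k^{d_{j-1}}$ extended by the identity, to add an arbitrary element of $\k^{d_{j-1}}$. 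Therefore $Pv=\k^{d_j}\setminus\k^{d_{j-1}}$, and
$$\overline{Pv}=\k^{d_j}=U_1\oplus\cdots\oplus U_j .$$

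Comparing the two descriptions, $\overline{Pv}=\overline{Lv}$ holds if and only if $\{i : v_i\ne 0\}=\{1,\dots,j\}$, which is the stated condition with $k=j$. When $v=0$, both closures are $\{0\}$ and the statement should be read with the empty index set, by convention.

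For existence, I choose the splitting so that $v$ lies in the first block. Concretely, since the flag and $L$ are at our disposal up to conjugation, reorder the blocks of the decomposition attached to $L$ so that a block containing a nonzero component of $v$ comes first. Alternatively, for given $L$, take the parabolic whose first block is any factor on which $v$ has a nonzero component and then list the remaining factors with nonzero $v_i$, followed by those with $v_i=0$. Every ordering of the factors of $L$ yields a parabolic with Levi factor $L$, so placing the nonzero-component factors first gives an adapted $P$. The only mildly delicate step is the transitivity of $P$ on $\k^{d_j}\setminus\k^{d_{j-1}}$, and that is a direct matrix check.
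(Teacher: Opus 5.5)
Your argument is correct. The paper omits the proof as ``quite easy'', and your direct computation is the natural way to fill it in: $\overline{Lv}=\bigoplus_{v_i\neq 0}U_i$, and $\overline{Pv}=\k^{d_j}$ because $P$ acts transitively on $\k^{d_j}\setminus\k^{d_{j-1}}$; comparing the two gives the criterion.

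Your remark that $v=0$ forces one to allow $k=0$ is a genuine correction to the statement as written. That case is used later in the paper, e.g.\ for bipartitions $(\emptyset;\nu)$. For existence, only your second formulation works: all blocks with $v_i\neq 0$ must precede the others, and putting just one nonzero block first is not enough.
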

The proof is quite easy, so we omit it.

\medskip

Let $(v,x_s)\in \Vb$ and $L$ be a Levi subgroup of $\G$ such that $L\subseteq\G^{x_s}$.  Choose a  parabolic subgroup $P$ of $\G$ adapted to $L$ and $v$, with Levi decomposition $P=LU_P$, and let $\mathfrak{u}_P={\rm Lie}(U_P)$.  By construction 
\[\overline{L(v,x_s)}+(0,\mathfrak{u}_P)=(\overline{Lv},x_s+\mathfrak{u}_P)=(\overline{Pv},x_s+\mathfrak{u}_P)\] is $P$-stable.
 
We consider the associated bundle $\G\times^P\left(\overline{Lv},x_s+\mathfrak{u}_P\right)$ and the $\G$-equivariant morphism
\begin{align}\label{eq:associated-bundle}
\widetilde{\pi}\colon\G\times^P\left(\overline{Lv},x_s+\mathfrak{u}_P\right)&\to\Vb\\
\nonumber g*(w,y)&\mapsto (gw,gy) 
\end{align}

It is a slight generalisation of the resolution of singularities in \cite[\textsection 3]{ah}.

\begin{lemma}\label{lem:induction0-step1}Let $(v,x_s)\in \Vb$,  let $L$ be a Levi subgroup of $\G$ with  $L\subset {\G}^{x_s}$ and  ${\mathfrak l}={\rm Lie}(L)$,  let $P$ be a parabolic adapted to $L$ and $v$ with Levi decomposition $P=LU_P$ and ${\mathfrak u}_P={\rm Lie}(U_P)$.
\begin{enumerate}
\setlength{\itemsep}{4pt}
\item The morphism \eqref{eq:associated-bundle} is proper and therefore closed.  
\item Its image is the closure of  a $\G$-orbit. We denote it by ${\bO}$.
\item The orbit ${\bO}$ has  a representative in $(v,x_s+(\mathfrak{u}_P\cap (\gl_n^{x_s})^{\Gbreg}))$. 
\item If  $x_s\in{\mf z}(\mf l)^{\Gbreg}$,  then $ {\bO}=\G(v,x_s)$.
\item For $p$ as in \eqref{eq:projections} there holds $p({\bO})={\rm Ind}_{\mathfrak l,x_s}^{\mathfrak{gl}_n}(\{0\})$, in particular, if  $x_s=0$, then $p({\bO})$ is  the Richardson orbit in $\gl_n$ corresponding to $L$. 
\item The morphism $\widetilde{\pi}$ is birational. 
\end{enumerate}
\end{lemma}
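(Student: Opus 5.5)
The plan is to establish (1), (2) and (5) in the standard way, then obtain (3), (4) and (6) from a comparison with the classical Lusztig--Spaltenstein picture and a dimension count.

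For (1), factor $\widetilde{\pi}$ as the closed embedding
\[
\G\times^P\left(\overline{Lv},x_s+\mathfrak u_P\right)\hookrightarrow \G\times^P\Vb\simeq \G/P\times\Vb,\qquad g*(w,y)\mapsto (gP,(gw,gy)),
\]
followed by the projection to $\Vb$. The subset is closed because $\left(\overline{Lv},x_s+\mathfrak u_P\right)$ is closed and $P$-stable. The projection is proper since $\G/P$ is complete, so $\widetilde\pi$ is proper and hence closed.

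For (2), note that the source is irreducible, being a bundle over $\G/P$ with irreducible fibre, and $\G$ acts on it. Hence the image is an irreducible closed $\G$-stable set. To see it is an orbit closure I would argue by finiteness. Applying $p$ and using Corollary~\ref{lem:closure-proj}, $p(\mathrm{Im}\,\widetilde\pi)=\G(x_s+\mathfrak u_P)$, which is the closure of ${\rm Ind}^{\gl_n}_{\mathfrak l,x_s}(0)$ by classical theory. So every element of the image has semisimple part conjugate to one in $x_s+\mathfrak u_P$, and the image lies in a finite union of Jordan classes lying over finitely many adjoint orbits. Over a fixed element $y$ of the image in $\gl_n$, the vectors fall into finitely many $\G^y$-orbits. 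Indeed, $\G^y=\G^{y_s}\cap\G^{y_n}$ and $(w,y_n)$ lies in the enhanced nilcone of $\G^{y_s}$, which has finitely many orbits (Lemma~\ref{lem:poset-nil}). Hence the image is a finite union of $\G$-orbits, and being irreducible it is the closure of one of them, $\bO$. This gives (2), and (5) follows since $p(\bO)$ is the dense orbit in $p(\overline{\bO})=\overline{{\rm Ind}_{\mathfrak l,x_s}^{\gl_n}(0)}$.

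For (3), the locus of $g*(w,y)$ with $w\in Lv$ and $y$ lying over the induced orbit is open and nonempty in the source, and so is its preimage of $\bO$. Using the adapted condition $\overline{Pv}=\overline{Lv}$, we can act by $P$ to make $w=v$, with $y\in x_s+\mathfrak u_P$ generic. The intersection $\mathfrak u_P\cap(\gl_n^{x_s})^{\Gbreg}$ is open in $\mathfrak u_P\cap\gl_n^{x_s}$. I would therefore reduce via $P\cap\G^{x_s}$ (Lusztig--Spaltenstein: $P(x_s+n)=x_s+\mathfrak u_P$ for $n$ in the dense piece) to obtain a representative $(v,x_s+n)$ with $n\in\mathfrak u_P\cap\gl_n^{x_s}$ regular in the appropriate sense. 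For (4), if $\gl_n^{x_s}=\mathfrak l$ then $\mathfrak u_P\cap\gl_n^{x_s}=0$, so (3) gives $\bO=\G(v,x_s)$.

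For (6), the plan is to compare dimensions and then count the fibre. The source has dimension $\dim\G/P+\dim Lv+\dim\mathfrak u_P$, and I would compute $\dim\bO$ from (3) and Theorem~\ref{prop:double-dim}-style orbit dimension formulas. Given equality of dimensions, it suffices to show that the generic fibre over $(v,x_s+n)$ is a single point. A fibre point $g*(w,y)$ requires $g^{-1}(v,x_s+n)\in(\overline{Lv},x_s+\mathfrak u_P)$. Classically the set of such $g$ for $y$ is $P\cdot$ a finite set, reduced to $P$ in type A (Richardson in $\gl_n$: $\G^{y}\subseteq P$ for generic $y$). Adding the vector condition only shrinks this set, so the fibre is a point, and $\widetilde\pi$ is birational (generically injective and, $\G$-equivariantly, separable since stabilisers agree).

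The main obstacle is the dimension equality in (6) together with the precise choice of representative in (3). Both hinge on how the adapted condition interacts with $\G^{x_s+n}\cap P$. I would first verify this in the basis adapted to the flag, using Lemma~\ref{lem:good-parabolic}.
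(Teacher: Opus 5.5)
Your arguments for (1), (2) and (5) are essentially the paper's. So is (4), which you derive from (3). The genuine gap is in (3). You flag it as the main obstacle but do not resolve it, and the mechanism you propose cannot work.

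After acting by $L$ you have $(v,y)\in\bO$ with $y\in x_s+\mathfrak u_P$, and you must move $y$ into $x_s+(\mathfrak u_P\cap\gl_n^{x_s})$.
\begin{itemize}
\item Elements of $P\cap\G^{x_s}$ cannot do this. They fix $x_s$ and preserve $\gl_n^{x_s}$, so they never carry $y-x_s\notin\gl_n^{x_s}$ into $\gl_n^{x_s}$.
\item You must use $U_P$, and then the vector moves. It stays in $Pv\subseteq\overline{Pv}=\overline{Lv}$, but not necessarily in $Lv$, so you cannot bring it back to $v$ with $L$.
\end{itemize}
For instance, take $n=2$, $x_s=\mathrm{diag}(a,b)$ with $a\neq b$, $L=\G^{x_s}$, $P$ upper triangular and $v=e_1+e_2$. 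The element $\left(\begin{smallmatrix}1&-1\\0&1\end{smallmatrix}\right)\in U_P$ carries $(v,\,x_s+\left(\begin{smallmatrix}0&b-a\\0&0\end{smallmatrix}\right))$ to $(e_2,x_s)$, and $e_2\notin Lv$. Separately, the classical fact you quote should say that $P(x_s+n)$ is dense in $x_s+\mathfrak u_P$, not equal to it.

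The missing idea, and the place where adaptedness is really used, is to conjugate first and choose the generic point afterwards, as the paper does.
\begin{itemize}
\item Every element of $(\overline{Lv},x_s+\mathfrak u_P)$ is $P$-conjugate to one in $(\overline{Pv},x_s+(\mathfrak u_P\cap\gl_n^{x_s}))=(\overline{Lv},x_s+(\mathfrak u_P\cap\gl_n^{x_s}))$. So $\bO$ meets this irreducible set.
\item The intersection is the set of its points of orbit dimension $d=\dim\bO$, which is open and nonempty in it.
\item Hence it meets the dense subset $(Lv,x_s+(\mathfrak u_P\cap\gl_n^{x_s})^{\Gbreg})$.
\item Acting by $L$, which fixes $x_s$ and normalises $\mathfrak u_P\cap\gl_n^{x_s}$ together with its regular locus, then moves the vector to $v$.
\end{itemize}
Your (4) inherits the gap, but it also follows directly. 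If $\gl_n^{x_s}=\mathfrak l$ then $x_s+\mathfrak u_P=U_Px_s$, and adaptedness gives $\G(\overline{Lv},x_s+\mathfrak u_P)=\G(\overline{Lv},x_s)=\overline{\G(v,x_s)}$.

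Your route to (6) is correct and differs from the paper's. The map $g*(w,y)\mapsto g*y$ injects $\widetilde\pi^{-1}(w,y)$ into the fibre over $y$ of the classical map $\G\times^P(x_s+\mathfrak u_P)\to\gl_n$. By (5), $y\in{\rm Ind}_{\mathfrak l,x_s}^{\gl_n}(\{0\})$, and in type {\sf A} that classical fibre is a single point. So every fibre over $\bO$ is a singleton, and this does not even need (3).

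The paper instead applies the Borho--Kraft count $|\widetilde\pi^{-1}(v,x)|=[\G^{(v,x)}:P^{(v,x)}]\cdot|(\bO\cap(\overline{Lv},x_s+\mathfrak u_P))/P|$. It then checks separately that the index is $1$ and that there is one $P$-orbit, using the same type-{\sf A} Richardson fact. Your fibre comparison is more economical.

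Two remarks on your (6):
\begin{itemize}
\item The dimension comparison you worry about is unnecessary. Finite fibres over the dense orbit already force $\dim\G\times^P(\overline{Lv},x_s+\mathfrak u_P)=\dim\bO$.
\item In positive characteristic, ``stabilisers agree'' does not by itself give separability. What you need is that the orbit map $\G\to\bO$ is separable. This holds because $\G^{(v,x)}$ is open in the affine space $1+\{A\in\gl_n^x : Av=0\}$, hence smooth with Lie algebra equal to the infinitesimal stabiliser. The bijective equivariant map $\widetilde\pi^{-1}(\bO)\to\bO$ is then separable, hence birational.
\end{itemize}
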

\begin{proof} 
\begin{enumerate}
\item The morphism $\widetilde{\pi}$  is closed because it corresponds to a projection onto $\G(\overline{Lv},x_s+\mathfrak{u}_P)$ after the identification of $\G\times^P (\overline{Lv},x_s+\mathfrak{u}_P)$ with the incidence variety
\[ \{(gP,z)\in \G/P\times \G(\overline{Lv},x_s+\mathfrak{u}_P)~| ~g^{-1}z\in(\overline{Lv},x_s+\mathfrak{u}_P)\}.\]

\item The image of $\widetilde{\pi}$  is $\G$-stable because $\widetilde{\pi}$  is $\G$-equivariant,  irreducible because $\left(\overline{Lv},x_s+\mathfrak{u}_P\right)$ is irreducible, and closed by (1). Therefore it is enough to show that  ${\rm Im}\,\widetilde{\pi}$ contains finitely-many $\G$-orbits.   
By looking at the component in $\gl_n$, we see as in \cite[Section 2.1]{bo-inv} that the elements in ${\rm Im}\,\widetilde{\pi}$ have semisimple part contained in the $\G$-orbit of $x_s$.  Hence, its image is contained in
$\G(\k^n ,x_s+\gl_n^{x_s}\cap\mathcal N(\gl_n))$.  Now,  $(\k^n, x_s+\gl_n^{x_s}\cap\mathcal N(\gl_n))$ is obtained by shifting by $(0,x_s)$ the enhanced nilcone for $\G^{x_s}$,  so it consists of finitely-many $\G^{x_s}$-orbits.

\item Note that 
\[{\bO}=\left(\G(\overline{Lv},x_s+\mathfrak{u}_P)\right)^{\Gbreg}=\G(\overline{Lv},x_s+\mathfrak{u}_P)^{\Gbreg}.\]
Let $\dim {\bO}=d$,  that is, ${\bO}$ lies in the level set $\Vb_{(d)}$ consisting of orbits of dimension $d$. 
As in  (2) we see that $x_s+\mathfrak{u}_P$ is $P$-conjugate to an element in $x_s+(\mathfrak{u}_P\cap \gl_n^{x_s})$.
Therefore, ${\bO}$ has a representative in $(\overline{Pv},x_s+(\mathfrak{u}_P\cap \gl_n^{x_s}))\cap\Vb_{(d)}$. As $P$ is adapted to $L$ and $v$, 
\[{\bO}\subseteq \G (\overline{Lv},x_s+(\mathfrak{u}_P\cap \gl_n^{x_s})\cap \Vb_{(d)}\subseteq  \G(\overline{Lv},x_s+\mathfrak{u}_P)\cap \Vb_{(d)}={\bO}\]
and the inclusions are equalities. 

Now,  the subsets $(\overline{Lv},x_s+(\mathfrak{u}_P\cap \gl_n^{x_s}))\cap \Vb_{(d)}$ and  $({Lv},x_s+(\mathfrak{u}_P\cap\gl_n^{x_s})^{\Gbreg})$ are open, respectively dense
 in the irreducible subset $(\overline{Lv},x_s+(\mathfrak{u}_P\cap \gl_n^{x_s}))$, so 
\[\emptyset\neq(\overline{Lv},x_s+(\mathfrak{u}_P\cap \gl_n^{x_s}))\cap \Vb_{(d)}\cap ({Lv},x_s+(\mathfrak{u}_P\cap\gl_n^{x_s})^{\Gbreg})\subseteq {\bO}.\]

Acting by $L$, we obtain   a representative in $(v,x_s+(\mathfrak{u}_P\cap\gl_n^{x_s})^{\Gbreg})$. 
\item If $x_s\in{\mf z}(\mf l)^{\Gbreg}$,  then $\gl_n^{x_s}=\mf l$, whence $\mathfrak{u}_P\cap \gl_n^{x_s}=0$, and so 
$(v,x_s)\in {\bO}$. 
\item Follows  because ${\rm Ind}_{\mathfrak l,x_s}^{\mathfrak{gl}_n}(\{0\})=\G (x_s+\mathfrak{u}_P)^{\Gbreg}$, by the arguments in \cite[\S 2.1]{bo-inv}, \cite{LS}. 

\item  Choose a representative $\widetilde{x}=(v,x)=(v,x_s+x_n)$ of ${\bO}$  as in (3)  and consider the commutative diagram of $\G$-equivariant morphisms
\begin{equation*}
\begin{tikzcd}
{\G} \times^P (\overline{L v},x_s+\mathfrak{u}_P) \arrow[d, swap,"\id \times p"] \arrow[rr, "\widetilde{\pi}"]&& \overline{{\bO}}\subseteq \Vb \arrow[d, "p"] \\
{\G} \times^P( x_s+\mathfrak{u}_P) \arrow[rr,"\pi"] && \overline{{\rm Ind}_{\mathfrak{l},x_s}^{\mathfrak{gl}_n} (\{0\}) } \subseteq \mathfrak{gl}_n
\end{tikzcd} 
\end{equation*}
The characteristic-free criterion in the proof of \cite[Lemma 7.10]{BorhoKraft} asserts that  \begin{equation*}|\widetilde{\pi}^{-1}(v,x_s+x_n)|=[\G^{(v,x_s+x_n)}:P^{(v,x_s+x_n)}]\left|({{\bO}}\cap (\overline{L v},x_s+\mathfrak{u}_P))/P\right|.\end{equation*} It follows from (5) that $G^{x_s}x_n$ is the Richardson orbit for $G^{x_s}$ induced using the parabolic subgroup $P^{x_s}$, so  $[\G^{x_s+x_n}:P^{x_s+x_n}]\left|p({{\bO}})\cap (x_s+\mathfrak{u}_P\cap \gl_n^{x_s})/P^{x_s}\right|=1$. 
Therefore 
\begin{equation}\label{eq:index}
[\G^{(v,x_s+x_n)}:P^{(v,x_s+x_n)}]=[\G^v\cap\G^{x_s+x_n}:P^{(v,x_s+x_n)}]=[\G^v\cap P^{x_s+x_n}:P^{(v,x_s+x_n)}]=1.
\end{equation} 
Let now $(w,x_s+u)\in {{\bO}}\cap (\overline{L v},x_s+\mathfrak{u}_P)$. Since $\left|p({{\bO}})\cap (x_s+\mathfrak{u}_P\cap\gl_n^{x_s})/P^{x_s}\right|=1$, there is $w'\in \overline{L v}$ such that 
$(w',x_s+x_n)\in P(w,x_s+u)$. Since $P(w,x_s+u)\subset{{\bO}}$, there is  $g\in\G$ such that $g(w,x_s+u)=(v,x_s+x_n)$. But then, $g\in \G^{x_s+x_n}\subset P^{x_s+x_n}$, whence   $P(v,x_s+x_n)=P(w,x_s+u)$. Combining with \eqref{eq:index} we have $|\widetilde{\pi}^{-1}(v,x_s+x_n)|=1$.
\end{enumerate}
\end{proof}
\medskip

We denote the orbit ${\bO}$ constructed as in Lemma \ref{lem:induction0-step1} by ${\rm Ind}_{\Vb_L,P}^{\Vb}L(v,x_s)$ and we call it the \emph{induced enhanced orbit} of $L(v,x_s)$. We observe that induced orbits have dimension properties generalising those of induced adjoint orbits.

\begin{lemma}\label{lem:induction0-step2}Let $x_s$, $L$, $\mathfrak l$, $v$ and $P$ be as Lemma \ref{lem:induction0-step1}.  Then
\begin{equation}\label{eq:estimate-codim}\codim_{\Vb}{\rm Ind}_{\Vb_L,P}^{\Vb}L(v,x_s)=\codim_{\Vb_L}L(v,x_s).\end{equation}
\end{lemma}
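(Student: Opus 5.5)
We compute $\dim \bO$ with the birational map $\widetilde\pi$ of Lemma~\ref{lem:induction0-step1}(6). Since $\widetilde\pi$ is birational onto $\overline{\bO}$, we get
\[\dim\bO=\dim \G\times^P(\overline{Lv},x_s+\mathfrak u_P)=\dim\G/P+\dim \overline{Lv}+\dim\mathfrak u_P .\]
Using $\dim \G/P=\dim\mathfrak u_P$, this becomes $\dim\bO=2\dim\mathfrak u_P+\dim Lv$. On the other side, $\dim\Vb-\dim\Vb_L=\dim\gl_n-\dim\mathfrak l=2\dim\mathfrak u_P$. So \eqref{eq:estimate-codim} is equivalent to $\dim L(v,x_s)=\dim Lv$.

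To prove this equality, note that $L\subseteq\G^{x_s}$ because $x_s$ is centralised by $L$. Hence $L^{(v,x_s)}=L^v\cap L^{x_s}=L^v$. It follows that $\dim L(v,x_s)=\dim L-\dim L^v=\dim Lv$. Combining with the first paragraph,
\[\codim_{\Vb}\bO=n+n^2-2\dim\mathfrak u_P-\dim Lv=n+\dim\mathfrak l-\dim L(v,x_s)=\codim_{\Vb_L}L(v,x_s).\]

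The only step that needs care is the dimension count for the associated bundle. Here I would use that $\G\times^P Y\to\G/P$ is a locally trivial fibration with fibre $Y=(\overline{Lv},x_s+\mathfrak u_P)$. This fibre is irreducible of dimension $\dim Lv+\dim\mathfrak u_P$, being a product of $\overline{Lv}$ with an affine space. I would also check that birationality gives equality of dimensions with the closure $\overline{\bO}=\operatorname{Im}\widetilde\pi$, and hence with $\bO$, which is open dense in it by Lemma~\ref{lem:induction0-step1}(2). Both points are routine. If one prefers to avoid birationality, finiteness of the generic fibre would suffice. That finiteness could be checked directly through the diagram in the proof of Lemma~\ref{lem:induction0-step1}(6), since $\pi$ is generically finite by classical Lusztig--Spaltenstein theory.
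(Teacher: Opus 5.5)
Your proof is correct and follows the paper's argument. Both compute $\dim \G\times^P(\overline{Lv},x_s+\mathfrak u_P)$, use the birationality of $\widetilde\pi$ from Lemma~\ref{lem:induction0-step1}(6) to make the generic fibre zero-dimensional, and compare the result with $\codim_{\Vb_L}L(v,x_s)$. The identity $\dim L(v,x_s)=\dim Lv$, which you justify via $L^{(v,x_s)}=L^v$, is used only implicitly in the paper's last step.
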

\begin{proof}
Let $P=LU_P$ be the Levi decomposition of $P$ and  $\mathfrak{u}_P={\rm Lie}(U_P)$. By the fibre theorem there is a dense subset $U$ of $\overline{{\rm Ind}_{\Vb_L,P}^{\Vb}L(v,x_s)}$  such that for any $u\in U$  we have
\begin{align*}\codim_{\Vb}({\rm Ind}_{\Vb_L,P}^{\Vb}L(v,x_s))&=\dim\G+n-\dim\left(\G\times^P\left( \overline{Lv},x_s+\mathfrak{u}_P\right)\right)+\dim\widetilde{\pi}^{-1}(u)\\
&=n+\dim P-\dim U_P-\dim Lv+\dim\widetilde{\pi}^{-1}(u)\\
&=n+\dim L-\dim Lv+\dim\widetilde{\pi}^{-1}(u)\\
&=\codim_{\Vb_L}L(v,x_s)+\dim\widetilde{\pi}^{-1}(u).
\end{align*}
The conclusion follows from Lemma \ref{lem:induction0-step1} (6).
\end{proof}

\subsection{The closure of the enhanced class of $(v,x_s)$}
We are now in a position to describe the closure of enhanced Jordan classes $\mathfrak J$ such that $p(\mathfrak J)$ consists of semisimple elements. 

\begin{proposition}\label{prop:unique}
Let $(v,x_s)\in\Vb$, with $x_s\in\gl_n$ semisimple, let $L={\G}^{x_s}$ and $\mathfrak{l}=\gl_n^{x_s}$, and let $P$ be a parabolic subgroup of $\G$ adapted to $L$ and $v$. 
Let ${\mathfrak u}_P$ be the nilpotent radical of ${\rm Lie}(P)$.  Then
\begin{equation}\label{eq:closure-enhanced}
\overline{{\mathfrak J}(v,x_s)}=\G\left(\overline{Lv},\mathfrak{z}(\mathfrak l)+\mathfrak{u}_P\right),\quad\mbox{ and }\quad \overline{{\mathfrak J}(v,x_s)}\cap{\mathcal N}( \Vb )=\G\left(\overline{Lv},\mathfrak{u}_P\right)=\overline{{\rm Ind}_{\Vb_L,P}^{\Vb}L(v,0)}.\end{equation}
In particular, $ \overline{{\mathfrak J}(v,x_s)}\cap{\mathcal N}(\Vb)$ is irreducible and ${\rm Ind}_{\Vb_L,P}^{\Vb}L(v,0)$ is independent of the choice of the  adapted parabolic. 
\end{proposition}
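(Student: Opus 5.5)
To prove the first equality in \eqref{eq:closure-enhanced}, I will show that $Z:=\G\left(\overline{Lv},\mathfrak{z}(\mathfrak l)+\mathfrak{u}_P\right)$ is closed and irreducible, that it contains $\mathfrak J(v,x_s)$, and that it has the same dimension as $\mathfrak J(v,x_s)$.

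\textbf{Step 1: $Z$ is closed and irreducible.} Since $P$ is adapted to $L$ and $v$, and $L$ centralises $\mathfrak z(\mathfrak l)$ while $U_P$ acts trivially modulo $\mathfrak u_P$ on $\mathfrak z(\mathfrak l)+\mathfrak u_P$, the set $(\overline{Lv},\mathfrak z(\mathfrak l)+\mathfrak u_P)=(\overline{Pv},\mathfrak z(\mathfrak l)+\mathfrak u_P)$ is closed and $P$-stable. The same incidence-variety argument as in Lemma~\ref{lem:induction0-step1}(1) then shows that
\[\widetilde\pi\colon \G\times^P(\overline{Lv},\mathfrak z(\mathfrak l)+\mathfrak u_P)\to\Vb\]
is proper. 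Hence $Z$ is closed, and it is irreducible as the image of an irreducible variety.

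\textbf{Step 2: $\mathfrak J(v,x_s)\subseteq Z$.} By \eqref{eq:enhanced-jordan-class}, $\mathfrak J(v,x_s)=\G(v,\mathfrak z(\mathfrak l)^{\Gbreg})$, and this set lies in $Z$.

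\textbf{Step 3: dimension count.} For every $z\in\mathfrak z(\mathfrak l)^{\Gbreg}$, Lemma~\ref{lem:induction0-step1}(4) shows that the fibre of $\widetilde\pi$ over $(v,z)$ meets $\G(v,z)$ generically, and Lemma~\ref{lem:induction0-step1}(6), applied with $x_s$ replaced by $z$, shows that it is generically injective there. Therefore
\[\dim Z\le \dim\G/P+\dim \overline{Lv}+\dim\mathfrak z(\mathfrak l)+\dim\mathfrak u_P=\dim \G-\dim L+\dim Lv+\dim\mathfrak z(\mathfrak l).\]
Since $\G^{(v,x_s)}=L^v$, the right-hand side equals $\dim\G(v,x_s)+\dim\mathfrak z(\mathfrak l)$, which is $\dim\mathfrak J(v,x_s)$ by Proposition~\ref{prop:dimension-enhanced}. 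As $Z$ is irreducible and closed and contains $\mathfrak J(v,x_s)$ of the same dimension, we get $Z=\overline{\mathfrak J(v,x_s)}$.

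\textbf{Step 4: the nilpotent part (the main obstacle).} The inclusion $\G(\overline{Lv},\mathfrak u_P)\subseteq Z\cap\cN(\Vb)$ is clear. For the reverse inclusion, take $g(w,z+u)\in\cN(\Vb)$ with $w\in\overline{Lv}$, $z\in\mathfrak z(\mathfrak l)$ and $u\in\mathfrak u_P$. Then $z+u$ is nilpotent in $\gl_n$. Its image under the projection $\mathrm{Lie}(P)\to\mathfrak l$ is $z$, and this projection preserves the characteristic polynomial (block-triangular form), so $z$ is nilpotent. Since $z\in\mathfrak z(\mathfrak l)$ is semisimple, $z=0$. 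Hence $Z\cap\cN(\Vb)=\G(\overline{Lv},\mathfrak u_P)$.

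\textbf{Step 5: identification with the induced orbit.} $\G(\overline{Lv},\mathfrak u_P)$ is exactly the image of the map \eqref{eq:associated-bundle} with $x_s$ replaced by $0$, which is permitted since $L\subseteq\G^{0}$. By Lemma~\ref{lem:induction0-step1}(2) this image is $\overline{{\rm Ind}_{\Vb_L,P}^{\Vb}L(v,0)}$.

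\textbf{Step 6: the final claims.} $\overline{\mathfrak J(v,x_s)}\cap\cN(\Vb)$ is irreducible because it is an orbit closure. The left-hand side $\overline{\mathfrak J(v,x_s)}\cap\cN(\Vb)$ does not depend on $P$, so neither does the dense orbit ${\rm Ind}_{\Vb_L,P}^{\Vb}L(v,0)$ in it.

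Step 3 is the delicate point: the generic fibre of $\widetilde\pi$ must be finite, not merely the fibre over a single point. If the generic-fibre estimate is awkward, I would instead argue upper semicontinuity of fibre dimension on the irreducible source, starting from the finite fibre at the point $(v,z)$.
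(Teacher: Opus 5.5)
Your proof is correct and follows the paper's argument: properness of $\G\times^P(\overline{Lv},\mathfrak z(\mathfrak l)+\mathfrak u_P)\to\Vb$, the inclusion $\mathfrak J(v,x_s)\subseteq Z$, the dimension count via Proposition~\ref{prop:dimension-enhanced}, and the observation that the $\mathfrak z(\mathfrak l)$-component must vanish on the nilpotent locus (which you justify more carefully than the paper does). The fibre analysis in Step~3 that you flag as delicate is not needed: $\dim Z\le\dim \G\times^P(\overline{Lv},\mathfrak z(\mathfrak l)+\mathfrak u_P)$ holds for the image of any morphism, and the reverse inequality $\dim Z\ge\dim\mathfrak J(v,x_s)$ follows directly from the inclusion $\mathfrak J(v,x_s)\subseteq Z$.
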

\begin{proof}
Since $P$ is adapted to $L$ and $v$, the closed and irreducible subset $\left(\overline{Lv},\mathfrak{z}(\mathfrak l)+\mathfrak{u}_P\right)$ is $P$-stable. We consider the 
associated bundle  $\G\times^P\left(\overline{Lv},\mathfrak{z}(\mathfrak l)+\mathfrak{u}_P\right)$ and the $\G$-equivariant morphism
\begin{align*}
\widetilde{\mu}\colon \G\times^P\left(\overline{Lv},\mathfrak{z}(\mathfrak l)+\mathfrak{u}_P\right)&\to\Vb\\
g *(w,y)&\mapsto (gw, gy).
\end{align*}
Arguing as we did for Lemma \ref{lem:induction0-step1} (1) we see that $\widetilde{\mu}$ is closed hence its image $\G\left(\overline{Lv},\mathfrak{z}(\mathfrak l)+\mathfrak{u}_P\right)$ is closed and irreducible.
Now, ${\mathfrak J}(v,x_s)=\G\left(v,\mathfrak{z}(\mathfrak l)^{\Gbreg}\right)\subseteq \G\left(\overline{Lv},\mathfrak{z}(\mathfrak l)+\mathfrak{u}_P\right)$ whence
$\overline{{\mathfrak J}(v,x_s)}\subseteq \G\left(\overline{Lv},\mathfrak{z}(\mathfrak l)+\mathfrak{u}_P\right)$. 
Moreover, by \eqref{eq:dimension-enhanced}
\begin{align*}
\dim  {\mathfrak J}(v,x_s)&=\dim\G (v,x_s)+\dim\mathfrak{z}(\mathfrak l)\\
&=\dim\G-\dim \G^{x_s}+\dim \G^{x_s}v+\dim\mathfrak{z}(\mathfrak l)\\
&=\dim\G-\dim P+\dim\mathfrak{u}_P+\dim \overline{Lv}+\dim\mathfrak{z}(\mathfrak l)\\
&= \dim \G\times^P\left(\overline{Lv},\mathfrak{z}(\mathfrak l)+\mathfrak{u}_P\right)\geq \dim \G\left(\overline{Lv},\mathfrak{z}(\mathfrak l)+\mathfrak{u}_P\right)\geq \dim  {\mathfrak J}(v,x).
\end{align*}
Hence $\dim  \overline{{\mathfrak J}(v,x)}=\dim \G\left(\overline{Lv},\mathfrak{z}(\mathfrak l)+\mathfrak{u}_P,\right)$ and the first assertion follows from irreducibility of  $ \G\left(\overline{Lv},\mathfrak{z}(\mathfrak l)+\mathfrak{u}_P,\right)$.
 The  second equality follows because if $(w,y)=(w,z+x_n)\in \left(\overline{Lv}, \mathfrak{z}(\mathfrak l)+\mathfrak u_P\right)$, then it lies in 
 ${\mathcal N}(\Vb)$ if and only if $z=0$.
\end{proof}

In light of Proposition \ref{prop:unique} we will drop the subscript $P$ in ${\rm Ind}_{\Vb_L,P}^{\Vb}L(v,0)$ writing simply ${\rm Ind}_{\Vb_L}^{\Vb}L(v,0)$. The induced orbits from Lemma \ref{lem:induction0-step1} can all be reduced to the case in which $x_s=0$ as we now see.

\begin{proposition}\label{prop:centraliser}
 Let $L$ be a Levi subgroup of a parabolic subgroup of $\G$  and let ${\mathfrak l}={\rm Lie}(L)$.  Let  $x_s\in {\mathfrak z}({\mathfrak l})$ and $M:=\G^{x_s}$, and let $\Vb_M$ and $\Vb_L$ be respectively the enhanced module of $L$ and $M$. Let $v\in \k^n$ and $P$ be a parabolic subgroup adapted to $L$ and $v$. Then  ${\rm Ind}_{\Vb_L,P}^{\Vb}L(v,x_s)$ is independent of the adapted parabolic $P$ and 
\begin{equation}
{\rm Ind}_{\Vb_L,P}^{\Vb}L(v,x_s)=\G\left((0,x_s)+{\rm Ind}_{\Vb_L}^{\Vb_M}L(v,0)\right)
\end{equation}
that is,  the semisimple part of ${\rm Ind}_{\Vb_L,P}^{\Vb}L(v,x_s)$ is $x_s$ and the nilpotent part can be taken in the induced orbit ${\rm Ind}_{\Vb_L}^{\Vb_M}L(v,0)$ for the enhanced module corresponding to $M$.   
\end{proposition}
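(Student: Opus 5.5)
The plan is to reduce the induction from $L$ to $\G$ to an induction from $L$ to $M=\G^{x_s}$, using a parabolic of $\G$ built compatibly with $M$. Since $x_s\in\mathfrak z(\mathfrak l)$, we have $L\subseteq M$, and $M$ is a Levi subgroup of $\G$.

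First, given $P$ adapted to $L$ and $v$, I set $Q:=P\cap M$. This is a parabolic subgroup of $M$ with Levi factor $L$ and unipotent radical $U_P\cap M$, so $\mathfrak u_Q=\mathfrak u_P\cap\mathfrak m$. It is adapted to $L$ and $v$ inside $M$, because $\overline{Lv}\subseteq\overline{Qv}\subseteq\overline{Pv}=\overline{Lv}$.

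Second, I use the description of $\bO={\rm Ind}_{\Vb_L,P}^{\Vb}L(v,x_s)$ obtained in the proof of Lemma~\ref{lem:induction0-step1}(2),(3). Every element of $\G(\overline{Lv},x_s+\mathfrak u_P)$ is $\G$-conjugate to an element of $(\overline{Lv},x_s+(\mathfrak u_P\cap\mathfrak m))$. Indeed, $x_s+\mathfrak u_P$ is $U_P$-conjugate to $x_s+(\mathfrak u_P\cap\gl_n^{x_s})$, as in Borho \cite[\S2.1]{bo-inv}, and $U_P$ preserves $\overline{Pv}=\overline{Lv}$. Hence
\[\overline{\bO}=\G\bigl((0,x_s)+(\overline{Lv},\mathfrak u_Q)\bigr)=\G\bigl((0,x_s)+\overline{M(\overline{Lv},\mathfrak u_Q)}\bigr).\]
By Proposition~\ref{prop:unique} applied to $M$ with $x_s$ replaced by $0$, we have $M(\overline{Lv},\mathfrak u_Q)=\overline{{\rm Ind}_{\Vb_L}^{\Vb_M}L(v,0)}$, and this is independent of $Q$.

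Third, every element of $(0,x_s)+\overline{{\rm Ind}_{\Vb_L}^{\Vb_M}L(v,0)}$ has JKV decomposition with semisimple part $x_s$, since its $\gl_n$-component is $x_s$ plus a commuting nilpotent in $\mathfrak m$. The $\G$-orbit of such an element $(0,x_s)+(w,y_n)$ has dimension $\dim\G x_s+\dim M(w,y_n)$. Therefore the orbit of maximal dimension in $\overline{\bO}$ is $\G\bigl((0,x_s)+{\rm Ind}_{\Vb_L}^{\Vb_M}L(v,0)\bigr)$. This gives the displayed equality, and independence of $P$ follows because the right-hand side does not involve $P$.

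The main obstacle is the conjugation step: showing that $x_s+\mathfrak u_P$ is $U_P$-conjugate into $x_s+(\mathfrak u_P\cap\mathfrak m)$ in arbitrary characteristic, while keeping track of the vector component. The conjugation argument handles the matrix part. For the vector part I would use only that $U_P\subseteq P$ preserves $\overline{Lv}$, which is exactly where adaptedness is used.
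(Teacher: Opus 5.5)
Your proposal is correct and follows essentially the same route as the paper. Both arguments pass to the parabolic $P\cap M=P^{x_s}$ of $M$, which is adapted to $L$ and $v$, and both use the conjugation of $x_s+\mathfrak u_P$ into $x_s+(\mathfrak u_P\cap\mathfrak m)$ from Lemma~\ref{lem:induction0-step1}(3). The only difference is the last step: the paper closes with a brief density argument on regular loci, while you identify the dense orbit via $\dim\G(w,x_s+y_n)=\dim\G x_s+\dim M(w,y_n)$, which handles the vector component somewhat more explicitly.
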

\begin{proof}
Observe that  $\mathfrak l$  is a Levi subalgebra of a parabolic subalgebra of ${\mathfrak m}:={\rm Lie}(M)=\gl_n^{x_s}$ and  
and that $L$ is a Levi subgroup of the  parabolic subgroup $P^{x_s}$  of $M$. In addition,  $P^{x_s}v\subseteq Pv\subseteq\overline{Lv}$, that is, 
 $P^{x_s}$ is adapted to $L$ and $v$ for the enhanced module $\Vb_M$.  Moreover, ${\mathfrak u}_P\cap\mathfrak m$ is irreducible, as it is the unipotent radical of ${\rm Lie}(P^{x_s})$.  By Lemma \ref{lem:induction0-step1} (3)
\begin{align*}
{\rm Ind}_{\Vb_L,P}^{\Vb}L(v,x_s)&=\G\left(v,x_s+(\mathfrak{u}_P\cap \mm)^{\Gbreg}\right).
\end{align*}
A density argument shows that  $x_s+(\mathfrak{u}_P\cap \mm)^{M\operatorname{-reg}}$ meets the regular locus of $x_s+(\mathfrak{u}_P\cap \mm)$,  which in turn, is given by elements of the form $x_s+u$ for $u$ in the $M$-regular part of $\mathfrak{u}_P\cap \mm$ by the properties of the Jordan decomposition. Therefore
\begin{align*}
{\rm Ind}_{\Vb_L,P}^{\Vb}L(v,x_s)&=\G\left((0,x_s)+M\left((\mathfrak{u}_P\cap \mm)^{M\operatorname{-reg}},v\right)\right)\\&=\G\left((0,x_s)+{\rm Ind}_{\Vb_L}^{\Vb_M}L(v,0)\right).
\end{align*}
\end{proof}

Thanks to Proposition \ref{prop:centraliser} we may drop the subscript $P$ in the expression ${\rm Ind}_{\Vb_L,P}^{\Vb}L(v,x_s)$ for any $x_s\in{\mf z}(\mf l)$.

\begin{proposition}Let $(v,x_s)\in\Vb$, with $x_s$ semisimple,  $L=\G^{x_s}$,  and $\mathfrak{l}=\gl_n^{x_s}$.  Then
\begin{equation}\label{eq:closure-ss-Jc}
\overline{{\mathfrak J}(v,x_s)}=\bigcup_{z\in{\mathfrak z}(\mathfrak l)}\overline{{\rm Ind}_{\Vb_L}^{\Vb}(v,z)}=\bigcup_{z\in{\mathfrak z}(\mathfrak l)}\overline{\G\left((0,z)+{\rm Ind}_{\Vb_L}^{\Vb_{\G^z}}L(v,0)\right)}.
\end{equation}
and 
\begin{equation}\label{eq:reg-closure-ss-Jc}
\overline{{\mathfrak J}(v,x_s)}^{\Gbreg}=
\bigcup_{z \in {\mathfrak z}(\mathfrak l)} {\rm Ind}_{\Vb_L}^{\Vb}(v,z) = 
\bigcup_{z \in {\mathfrak z}(\mathfrak l)} \G  \left( (0,z) + {\rm Ind}_{\Vb_L}^{\Vb_{\G^z}}L(v,0)\right).
\end{equation}
\end{proposition}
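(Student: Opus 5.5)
The proof starts from Proposition~\ref{prop:unique}. Fix a parabolic $P=LU_P$ adapted to $L$ and $v$. Then
\[
\overline{\mathfrak J(v,x_s)}=\G\left(\overline{Lv},\mathfrak z(\mathfrak l)+\mathfrak u_P\right)=\bigcup_{z\in\mathfrak z(\mathfrak l)}\G\left(\overline{Lv},z+\mathfrak u_P\right).
\]
For each $z\in\mathfrak z(\mathfrak l)$ we have $L\subseteq \G^z$, and $P$ is adapted to $L$ and $v$. So Lemma~\ref{lem:induction0-step1}(1)--(2), applied to $(v,z)$ in place of $(v,x_s)$, shows that $\G(\overline{Lv},z+\mathfrak u_P)$ is exactly the closure $\overline{{\rm Ind}_{\Vb_L}^{\Vb}L(v,z)}$. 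Here we use Proposition~\ref{prop:centraliser} to drop the dependence on $P$. This gives the first equality in \eqref{eq:closure-ss-Jc}. The second equality in \eqref{eq:closure-ss-Jc} is Proposition~\ref{prop:centraliser} with $M=\G^z$, applied termwise.

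For \eqref{eq:reg-closure-ss-Jc}, compare orbit dimensions. By Lemma~\ref{lem:induction0-step2}, $\codim_{\Vb}{\rm Ind}_{\Vb_L}^{\Vb}L(v,z)=\codim_{\Vb_L}L(v,z)$. The right-hand side is independent of $z\in\mathfrak z(\mathfrak l)$, because $z$ is central in $\mathfrak l$ and fixed by $L$, so $L^{(v,z)}=L^v$. Hence every induced orbit ${\rm Ind}_{\Vb_L}^{\Vb}L(v,z)$ has the same dimension $d$. Taking $z=x_s\in\mathfrak z(\mathfrak l)^{\Gbreg}$ and using Lemma~\ref{lem:induction0-step1}(4) shows $d=\dim\G(v,x_s)$, which is the orbit dimension on $\mathfrak J(v,x_s)$.

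Orbits in the closure of a class have dimension at most $d$, since the rank varieties $\bigcup_{k\le d}\Vb_{(k)}$ are closed. Therefore $\overline{\mathfrak J(v,x_s)}^{\Gbreg}$ is the set of points in the closure whose orbit has dimension exactly $d$. Each ${\rm Ind}_{\Vb_L}^{\Vb}L(v,z)$ lies in this set. Conversely, a point $y$ in the closure lies in some $\overline{{\rm Ind}_{\Vb_L}^{\Vb}L(v,z)}$ by the first part. That closure is the closure of a single orbit of dimension $d$, and every other orbit in it has strictly smaller dimension. So if $\dim\G y=d$, then $y$ lies in the induced orbit itself. This gives the first equality of \eqref{eq:reg-closure-ss-Jc}, and the second again follows from Proposition~\ref{prop:centraliser}.

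The main subtlety is checking that Lemma~\ref{lem:induction0-step1} and Proposition~\ref{prop:centraliser} really apply to every $z\in\mathfrak z(\mathfrak l)$, including nonregular $z$ such as $z=0$, with the same fixed adapted parabolic $P$. The hypotheses there only require $L\subseteq\G^z$, which holds. The other point to verify is the identity $\dim L(v,z)=\dim Lv$, which uses only that $z$ is central in $\mathfrak l$.
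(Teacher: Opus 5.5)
Your proof is correct and follows essentially the same route as the paper. Like the paper, you split $\G(\overline{Lv},\mathfrak z(\mathfrak l)+\mathfrak u_P)$ from Proposition~\ref{prop:unique} into the union over $z$, identify each piece with $\overline{{\rm Ind}_{\Vb_L}^{\Vb}L(v,z)}$, and apply Proposition~\ref{prop:centraliser} termwise. You then obtain the regular-closure statement from the codimension identity of Lemma~\ref{lem:induction0-step2}; your dimension comparison ($L^{(v,z)}=L^v$, so all induced orbits have dimension $\dim\G(v,x_s)$) just spells out what the paper compresses into ``follows from \eqref{eq:estimate-codim}''.
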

\begin{proof}
Let $P=LU_P$ be a parabolic subgroup adapted to $L$ and $v$ and let ${\mathfrak u}_P={\rm Lie}(U_P)$.
Then \eqref{eq:closure-enhanced} gives
\begin{align*}
\overline{{\mathfrak J}(v,x_s)}&=\G(\bigcup_{z\in{\mathfrak z}(\mathfrak l)}(\overline{L v},z+{\mathfrak u}_P))=\bigcup_{z\in{\mathfrak z}(\mathfrak l)}\G(\overline{Lv},z+{\mathfrak u}_P)
=\bigcup_{z\in{\mathfrak z}(\mathfrak l)}\overline{{\rm Ind}_{\Vb_L}^{\Vb}L(v,z)}
\end{align*} so \eqref{eq:closure-ss-Jc} follows from Proposition  \ref{prop:centraliser}.   Then, \eqref{eq:reg-closure-ss-Jc}  follows from \eqref{eq:estimate-codim}.
\end{proof}

\subsection{The combinatorics of inducing from a vector} 

We now determine the bipartition associated to the enhanced orbit induced from a nilpotent element $(v,0) \in \cN(\Vb_L)$ for a chosen Levi subgroup $L$. 

We start by choosing a composition $(n_1,...,n_r)$ of $n$, meaning $\sum_j n_j = n$, and a vector space decomposition $\k^n = \bigoplus_{j=1}^r \k^{n_j}$. The subgroup of $\G$ stabilising the subspaces $\k^{n_j}$ is a Levi subgroup $L \subseteq \G$. We have $L \simeq\prod_{j=1}^r\GL_{n_j}$. 

For later use in this section we write $v_{1,j},...,v_{n_j, j}$ for the standard basis in $\k^{n_j}$, so that $\{v_{i,j} \, : 1\le j \le r, \ 1\le i \le n_i\}$ is a basis for $\k^n$. 

The enhanced nilpotent cone $\mathcal N(\Vb_L)$ for $L$ has the form $\mathcal N(\Vb_L) = \prod_{i=1}^{r} \k^{n_{i}} \times \mathcal{N}(\mathfrak{gl}_{n_{i}})$. 
As we recalled in Section~\ref{ss:nilpandss} the $\G$-orbits in $\cN(\Vb)$ are parameterised by bipartitions $\mathcal{Q}_n$, therefore $L$-orbits in $\cN(\Vb_L)$ are parameterised by elements of $\prod_{i=1}^r \mathcal{Q}_{n_i}$. We note that $\G(v, 0) \in \cN(\Vb)$ corresponds to $(1^n; \emptyset)$ if $v \ne 0$ whilst it corresponds to $(\emptyset; 1^n)$ if $v = 0$.

Pick an element $v\in \k^n$ and write $v = v^1 + \cdots + v^r$ for the decomposition with $v^i \in \k^{n_i}$. After reordering $(n_1,...,n_r)$ we can assume (without loss of generality) that $v^i \ne 0$ for $i=1,...,k$ and $v^{k+1} = \cdots = v^r = 0$, for some fixed $k$. In fact we can (and shall) assume that $v^j = \sum_{i=1}^{n_j} v_{i,j}$.

Order the basis $\{v_{i,j}\}$ so that $v_{i,j}$ precedes $v_{i', j'}$ whenever $j \le j'$. Let $P$ be the parabolic subgroup containing $L$ as a Levi factor, which also contains the upper-triangular Borel subalgebra. Then by Lemma~\ref{lem:good-parabolic} and by our choice of ordering of $\{v_{i,j}\}$ we see that $P$ is adapted to $L$ and $v$. Combining the above remarks, the orbit $Lv$ can be written in bipartition notation, as follows
$$Lv = \prod_{i=1}^{k} \bO_{(1^{n_i},\emptyset)} \times \prod_{i=k+1}^{r}  \bO_{(\emptyset, 1^{n_i})}.$$

In Lemma~\ref{lem:induction0-step1} we showed that the data $(P, L, Lv)$ give rise to a morphism $\G \times^P (\overline{Lv}, {\mf u}_P) \to \Vb$ and that there is an orbit $\bO:={\rm Ind}_{\Vb_L}^{\Vb}L(v,0)$ which is dense in the image.

\begin{proposition}
\label{prop:combinatorics-inducedfromzero}
The bipartition corresponding to $\bO$ is $(\mu ; \nu)$, where $\mu_{j}= |\{ i \in \{1,\ldots,k\} \; : n_i \geq j \} |$ and $\nu_{j}= |\{ i \in \{k+1,\ldots, r\} \; : n_i \geq j \}|$. 
\end{proposition}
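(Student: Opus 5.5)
We determine $\bO$ from two invariants: its dimension, and an explicit element of $(v,\mathfrak u_P)$ whose type we can compute directly. By Lemma~\ref{lem:induction0-step2}, $\codim_{\Vb}\bO=\codim_{\Vb_L}L(v,0)$. Hence $\dim\bO=n^2+n-\sum_j n_j^2-n+\dim Lv$, and $\dim Lv=\sum_{i\le k}n_i$. This gives
\[
\dim\bO=n^2-\sum_{j}n_j^2+\sum_{i\le k}n_i .
\]
By Lemma~\ref{lem:induction0-step1}(3), together with the fact that $(\overline{Lv},\mathfrak u_P)$ is irreducible, $\bO$ is the unique orbit of maximal dimension meeting $(\overline{Lv},\mathfrak u_P)$. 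It is also the orbit of a generic element $(v,u)$ with $u\in\mathfrak u_P$.

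\textbf{Step 1: the nilpotent part.} By Lemma~\ref{lem:induction0-step1}(5), $p(\bO)$ is the Richardson orbit of $L$, so its Jordan type is the transpose of the sorted composition $(n_1,\dots,n_r)$, namely the partition $\mu+\nu$ with $(\mu+\nu)_j=|\{i:n_i\ge j\}|$. This pins down the sum $\mu+\nu$. The remaining task is to show that the vector $v$ splits the Jordan blocks as claimed.

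\textbf{Step 2: an explicit representative.} We construct a concrete $u\in\mathfrak u_P$ with $x:=u$ in the Richardson orbit, and compute the $\G$-orbit of $(v,u)$ using the normal form of Section~\ref{ss:nilpandss} (in the version of Remark~\ref{rem:normal-form}). Sort the blocks so that the blocks with $v^i\neq0$, namely $i\le k$, come first in the ordering defining $P$. This is allowed by Lemma~\ref{lem:good-parabolic} and Proposition~\ref{prop:unique}.

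Let $u$ map $v_{a,j}\mapsto v_{a,j'}$, where $j'<j$ is the largest earlier index with $n_{j'}\ge a$, and $v_{a,j}\mapsto 0$ if no such $j'$ exists. This $u$ is strictly block upper triangular. Its Jordan chains are indexed by $a$: for fixed $a$, the chain consists of all blocks $j$ with $n_j\ge a$, so it has length $|\{j:n_j\ge a\}|$. These lengths are exactly $(\mu+\nu)_a$, as required by Step 1.

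The vector $v=\sum_{i\le k}\sum_a v_{a,i}$ has, in chain $a$, components only in the first $|\{i\le k:n_i\ge a\}|=\mu_a$ positions. Since the chain runs in the order of the blocks, these are the positions closest to the top of the chain, counted from the kernel end. After a unipotent change of basis in the centraliser of $u$, in the spirit of Remark~\ref{rem:normal-form}, $(v,u)$ takes the normal form of type $(\mu;\nu)$. Here the chain of length $\mu_a+\nu_a$ has $v$ sitting at height $\mu_a$.

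\textbf{Step 3: maximality.} It remains to check that this orbit has the dimension computed above. We use the known formula for $\dim\bO_{(\mu;\nu)}$ from \cite{ah}: $\dim\bO_{(\mu;\nu)}=\dim\G(\mu+\nu)+|\mu|$. The term $|\mu|=\sum_{i\le k}n_i$ matches $\dim Lv$. The Richardson orbit has dimension $n^2-\sum_j n_j^2$. So the two dimensions agree. Since $\bO$ is the unique orbit of maximal dimension in the irreducible image, and $\G(v,u)$ lies in that image with the same dimension, the two orbits coincide.

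The main obstacle is Step 2: checking that the ad hoc $u$ combined with $v$ really has type $(\mu;\nu)$. Conjugating $v$ within the centraliser of $u$ to a single basis vector per chain at height $\mu_a$ requires some care. In particular, the heights must be nonincreasing in $a$, which holds because $\mu_a$ is decreasing. If the direct argument becomes messy, an alternative is to compute $\dim\G^{(v,u)}$ directly and combine it with the upper semicontinuity of Achar--Henderson types.
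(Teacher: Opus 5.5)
Your proposal is correct and is essentially the paper's proof. Your $u\in\mathfrak u_P$ is the paper's $x_n$ (as in its diagram). You identify the type of $(v,u)$ and then match $\dim\bO=n^2-\sum_j n_j^2+\sum_{i\le k}n_i=\dim\G(v,u)$ via \cite[Proposition~2.8]{ah}; the paper instead reads off the type from the Jordan types of $x_n$ and $x_n|_{\k[x_n]v}$ using \cite[Corollary~2.9]{ah}. The step you flag as the main obstacle is immediate: labelling chain $a$ as $w_{a,1},\dots,w_{a,\mu_a+\nu_a}$ from the kernel end, $v=\sum_a\sum_{p=1}^{\mu_a}w_{a,p}$ is exactly the representative of type $(\mu;\nu)$ from Remark~\ref{rem:normal-form}, since $\mu$ and $\nu$ are partitions by definition, so no further conjugation is needed.
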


\begin{proof}
We choose a nilpotent element $x_n \in \Ind_{\l,0}^\g(\{0\}) \cap {\mf u}_P$, and fix $v$ as above. We claim that the bipartition of $\G(v,x_n)$ is precisely the one described in the statement of the current proposition. In order to justify this we will make a good choice of $x_n$ so that $\{v_{i,j}\}_{i,j}$ are a normal basis for $(v, x_n)$ (possibly after relabelling basis).

Define
\begin{eqnarray}
\label{eq:xndefined}
x_n v_{i,j} := \left\{ \begin{array}{rl} v_{i,l} & \text{ if } n_l \ge n_j \text{ and } n_q < n_j \text{ for } l < q < j \\ 0 & \text{ if no such } l \text{ exists}; \end{array} \right.
\end{eqnarray}
To aid the reader in understanding the definition of $x_n$ we provide a diagram: let $(4,2,3,5)$ be a composition of $14$. We then construct a single-column tableaux placing the vector $v_{i,j}$ in row $i$ and column $j$ with the nilpotent $x_n$ acting as follows:
\begin{center}
\begin{tikzpicture}[scale=0.5]
\draw (0,0) -- (0,4) -- (1,4) -- (1,0) -- cycle;
\draw (0,1) -- (1,1);
\draw (0,2) -- (1,2); 
\draw (0,3) -- (1,3);

\node at (0.5,3.5) {\tiny $v_{11}$};
\node at (0.5,2.5) {\tiny $v_{21}$};
\node at (0.5,1.5) {\tiny $v_{31}$};
\node at (0.5,0.5) {\tiny $v_{41}$};

\draw (3,2) -- (3,4) -- (4,4) -- (4,2) -- cycle;
\draw (3,3) -- (4,3); 

\node at (3.5,3.5) {\tiny $v_{12}$};
\node at (3.5,2.5) {\tiny $v_{22}$};

\draw (6,1) -- (6,4) -- (7,4) -- (7,1) -- cycle;
\draw (6,2) -- (7,2); 
\draw (6,3) -- (7,3);

\node at (6.5,3.5) {\tiny $v_{13}$};
\node at (6.5,2.5) {\tiny $v_{23}$};
\node at (6.5,1.5) {\tiny $v_{33}$};

\draw (9,-1) -- (9,4) -- (10,4) -- (10,-1) -- cycle;
\draw (9,0) -- (10,0);
\draw (9,1) -- (10,1);
\draw (9,2) -- (10,2); 
\draw (9,3) -- (10,3);

\node at (9.5,3.5) {\tiny $v_{14}$};
\node at (9.5,2.5) {\tiny $v_{24}$};
\node at (9.5,1.5) {\tiny $v_{34}$};
\node at (9.5,0.5) {\tiny $v_{44}$};
\node at (9.5,-0.5) {\tiny $v_{54}$};


\draw[->] (-0.5,3.5) to node[above] {\tiny $x_n$} (-1.5,3.5)node[left]{\tiny $0$};
\draw[->] (-0.5,2.5) to (-1.5,2.5)node[left]{\tiny $0$};
\draw[->] (-0.5,1.5) to (-1.5,1.5)node[left]{\tiny $0$};
\draw[->] (-0.5,0.5) to (-1.5,0.5)node[left]{\tiny $0$};

\draw[->] (2.5,3.5) to node[above] {\tiny $x_n$} (1.5,3.5);
\draw[->] (2.5,2.5) to (1.5,2.5);

\draw[->] (5.5,3.5) to node[above] {\tiny $x_n$} (4.5,3.5);
\draw[->] (5.5,2.5) to (4.5,2.5);
\draw[->] (5.5,1.5) to (1.5,1.5);

\draw[->] (8.5,3.5) to node[above] {\tiny $x_n$} (7.5,3.5);
\draw[->] (8.5,2.5) to (7.5,2.5);
\draw[->] (8.5,1.5) to (7.5,1.5);
\draw[->] (8.5,0.5) to (1.5,0.5);
\draw[->] (8.5,-0.5) to (-1.5,-0.5)node[left]{\tiny $0$};

\end{tikzpicture}
\end{center}

Let $\lambda \vdash n$ denote the partition obtained by reordering $(n_1,...,n_r)$ to a partition, and taking the transpose. Equivalently, $\lambda = (\lambda_1,...,\lambda_{m})$ where $\lambda_i := |\{ 1\le j \le r \, : n_j \ge i\}|$ and $m = \max \{n_1,...,n_r\}$. Note that $\lambda_i = \mu_i + \nu_i$, with $(\mu; \nu)$ defined in accordance with the statement of the proposition.

For illustrative purposes, we suppose that $w = \sum_{j=1}^2 \sum_{i=1}^{n_i} v_{i,j}$, and we observe that, in our example, $x_n |_{\k[x_n]w}$ has Jordan block sizes $(2,2,1,1)$.

It follows directly from \eqref{eq:xndefined} that $x_n$ has Jordan block sizes given by $\lambda$. Furthermore it is not hard to see that $x_n|_{\k[x_n] v}$ has Jordan block sizes given by $\mu$. We deduce that $\G(v, x_n) = \bO_{(\mu; \nu)} \subseteq \cN(\Vb)$, using \cite[Corollary~2.9]{ah}.

Now   $(v, x_n)\in (v,\mathfrak u_P)$, so it  lies in the image of the morphism \eqref{eq:associated-bundle}, that is  $\G(v,x_n)\subset\overline{\bO} \subseteq \Vb$.

We we will show that $\overline{\G(v,x_n)}=\overline\bO$ by counting the dimensions of both varieties, and noting that they are equal.

First of all Lemma~\ref{lem:induction0-step1}(6), \cite{LS} and part  (8) of \cite[Proposition~2.8]{ah} imply that
\begin{eqnarray}
\label{eq:someeq1}
\begin{array}{rcl}
\dim \bO & = & \dim \G \times^P (\overline{Lv}, {\mf u}_P) \\ 
&=&\dim\G-\dim L+\dim Lv \\ &= & n^2 - \sum_{i=1}^r n_i^2 + \sum_{i=1}^k n_i \\ &=& \dim \G x_n + \sum_i \mu_i \\
&=& \dim \G(v, x_n).
\end{array}
\end{eqnarray}
Therefore $\bO=\G(v,x_n)=\bO_{(\mu;\nu)}$.
\end{proof}

\begin{corollary}\label{cor:all-induced}
\begin{enumerate}
\item[(1)] Every orbit in ${\mathcal N}(\Vb)$ is equal to $\Ind_{\Vb_L}^\Vb (Lv)$ for some choice of Levi subgroup $L$ and orbit $Lv \subseteq \k^n$.
\item[(2)] The pair $(L, Lv)$ in (1) is unique upto $\G$-conjugacy.
\item[(3)] The rigid orbits in $\cN(\Vb)$ are precisely those of the form $\G(v,0)$ where $v\in \k^n$. These are the orbits with bipartition $(1^n; \emptyset)$ or $(\emptyset; 1^n)$.
\end{enumerate}
\end{corollary}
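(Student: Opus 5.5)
The whole argument runs through Proposition~\ref{prop:combinatorics-inducedfromzero}, which computes the bipartition of $\Ind_{\Vb_L}^{\Vb}(Lv)$ for $L\simeq\prod_{j}\GL_{n_j}$ and $v=\sum_{j\le k}v^j$ with $v^j\neq 0$ exactly for $j\le k$. The output is $(\mu;\nu)$, where $\mu$ is the transpose of the partition obtained by sorting $(n_1,\dots,n_k)$, and $\nu$ is the transpose of the partition obtained by sorting $(n_{k+1},\dots,n_r)$. Up to $\G$-conjugacy and reordering of blocks, the pair $(L,Lv)$ is determined by the multisets $\{n_1,\dots,n_k\}$ and $\{n_{k+1},\dots,n_r\}$. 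The $L$-orbit of $v$ is determined by which components are nonzero, since $\GL_{n_j}$ is transitive on $\k^{n_j}\setminus\{0\}$. I will also use Proposition~\ref{prop:unique}, which shows that the induced orbit does not depend on the adapted parabolic, and Lemma~\ref{lem:good-parabolic}, which guarantees that an adapted parabolic exists.

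For (1), take an arbitrary bipartition $(\mu;\nu)$ of $n$. Let $(n_1,\dots,n_k)$ be the parts of the transpose $\mu^t$ and $(n_{k+1},\dots,n_r)$ the parts of $\nu^t$. Build the corresponding Levi $L$ and the vector $v$ with $v^j\neq0$ exactly for $j\le k$. Since transposition is an involution, the formula of Proposition~\ref{prop:combinatorics-inducedfromzero} returns exactly $(\mu;\nu)$. Because orbits in $\cN(\Vb)$ are parameterised by bipartitions, every nilpotent orbit is induced. If one of $\mu,\nu$ is empty, the corresponding block family is simply absent, and the degenerate cases $k=0$ and $k=r$ are still covered by the proposition.

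For (2), suppose $(L,Lv)$ and $(L',L'v')$ induce the same orbit. By the previous paragraph each pair is determined up to conjugacy by two multisets of block sizes. The proposition recovers these multisets from the output as $\mu^t$ and $\nu^t$, so equal outputs force equal multisets. It then remains to check that two pairs with the same multisets are $\G$-conjugate. I would do this by permuting the direct summands with an element of $N_{\G}(\T)$ and rescaling inside each $\GL_{n_j}$. The one point needing care is that the normalisation $v^j=\sum_i v_{i,j}$ and the reordering of blocks used in the proposition are harmless up to conjugacy, since any $v$ is $L$-conjugate to this normal form.

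For (3), an orbit is rigid when it cannot be induced from a proper Levi, that is, when it only arises as $\Ind$ from $L=\G$. With $L=\G$ there is a single block $r=1$, $n_1=n$, and the outputs are $(1^n;\emptyset)$ if $v\neq0$ and $(\emptyset;1^n)$ if $v=0$; these are exactly the orbits $\G(v,0)$. By (1) and (2), every other bipartition arises from a unique proper Levi with $r\ge2$, so it is non-rigid. Conversely, these two bipartitions need $\mu^t$ or $\nu^t$ to equal the single part $(n)$, which forces $r=1$. Hence they are induced only from $L=\G$, where the induction is the identity by Lemma~\ref{lem:induction0-step1}(4), and so they are rigid. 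The main obstacle is being precise that rigidity means ``not induced from any proper Levi'' with both $(L,Lv)$ allowed to vary; the uniqueness statement (2) is what settles this.
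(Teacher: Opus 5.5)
Your proposal is correct. Part (1) is exactly the paper's argument; parts (2) and (3) take somewhat different routes.

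\textbf{Part (2).} The paper does not invert the formula of Proposition~\ref{prop:combinatorics-inducedfromzero}. Instead it labels $\G$-classes of pairs $(L,Lv)$ by an auxiliary, untransposed bipartition: the sorted block sizes on the support of $v$, and those off it. Both these classes and the orbits in $\cN(\Vb)$ are then indexed by $\mathcal{Q}_n$, so the surjection from (1) is a bijection by counting. You prove injectivity directly, reading the two multisets off the induced bipartition as $\mu^t$ and $\nu^t$. This is equally short, and it has the advantage of giving the inverse map explicitly.

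\textbf{Part (3).} The paper does not use uniqueness here. It appeals to Lemma~\ref{lem:induction0-step1}(5): the $\gl_n$-component of an orbit induced from a proper Levi $L$ is the Richardson orbit of $L$, which is nonzero. Hence such an orbit cannot contain an element $(v,0)$.

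That geometric argument extends immediately to induction from arbitrary nilpotent orbits $L(w,y_n)$ with $y_n\neq 0$. This is the notion of rigidity ultimately used in Theorem~C(3) and Remark~\ref{rem:classification-sheets}. The extension works because, by Corollary~\ref{lem:closure-proj}, the projection of $\ov{\J}\cap\cN(\Vb)$ is the closure of a Lusztig--Spaltenstein induced orbit in $\gl_n$, and that orbit is nonzero.

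Your combinatorial argument only sees induction from pairs $(L,Lv)$. To cover $y_n\neq 0$ you would first apply (1) factorwise to write $L(w,y_n)$ as induced from some $(M,Mw')$ with $M\subseteq L$. You would then need transitivity (Proposition~\ref{prop:induction-transitive}), which the paper proves only later.

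At the point where the corollary appears, only induction from pairs $(L,Lv)$ has been defined. So both proofs are complete for the statement as it stands there.
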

\begin{proof}
(1) Let $(\mu; \nu) \in \mathcal{Q}_n$ be a choice of bipartition. We let $(n_1,...,n_k)$ be the lengths of the columns of $\mu$. Explicitly, this is the transpose partition to $\mu$, ordered so that $n_1 \le \cdots \le n_k$, where $k = \mu_1$. We also let $n_{k+1},\dots,n_r$ denote the lengths of the columns of a Young diagram representing $\nu$. Then the data $(n_1,\dots,n_r)$, together with $k$, gives rise to a choice of Levi subgroup $L$ and a vector $v \in \k^n$, as we explained prior to Proposition~\ref{prop:combinatorics-inducedfromzero}. Furthermore {\it loc. cit.}  shows that $\Ind_{\Vb_L}^\Vb(Lv) = \bO_{(\mu;\nu)}$, which proves (1).

(2) We claim that $\G$-orbits of pairs $(L, Lv)$ are in 1-1 correspondence with bipartitions. 

Let $(L, Lv)$ be a pair of Levi $L$ and $L$-orbit in $\k^n$. The $\G$-orbit of $(L, Lv)$ is uniquely determined by the $\G$-orbit of $L$ and the $N_\G(L)$-orbit of $Lv$. The $\G$-orbit of $L$ is classified by the equivalence class of the composition $(n_1,\dots ,n_r)$ where $L \simeq \prod_{i=1}^r \GL_{n_i}$, and equivalence of compositions is given by reordering. Clearly every equivalence class is represented by a unique partition.

With $L \simeq \prod_{i=1}^r \GL_{n_i}$ fixed, we can define a decomposition $\k^n = \bigoplus_{i=1}^r \k^{n_i}$ where $\k^{n_i} \subseteq \k^n$ is the subspace invariant under $\prod_{j\ne i} \GL_{n_j}\subseteq L$. Now $v \in \k^n$ decomposes as $v = \sum_{i=1}^r v^i$ and we refer to the set $\{1\le  i \le r \, : v^i \ne 0\}$ as the support of $v$. Note that all elements of $Lv$ have the same support. Then the $N_\G(L)$-orbit of $Lv$ is determined by the support of $v$, up to permuting the indexes $1,...,r$ for which the parts of the composition are equal.

Now from $(L, Lv)$ we form a bipartition $(\mu; \nu) \in \mathcal{Q}_n$ by letting $\mu$ be the ordered list consisting of all $n_i$ such that $i$ lies in the support of $v$, and $\nu$ to be the ordered list of all $n_i$ such that $i$ does not lie in the support. The fact that this gives a bijection is not hard to see.

Now according to part (1) there is a surjective map from the $\G$-conjugacy classes of pairs $(L, Lv)$ to the $\G$-orbits in $\cN(\Vb)$. Since both sets are parameterised by $\mathcal{Q}_n$ this map is a bijection.

(3) By part (1) an orbit is not rigid if the bipartition is neither $(1^n ; \emptyset)$ and $(\emptyset; 1^n)$. On the other hand, $(1^n; \emptyset)$  and $(\emptyset; 1^n)$ are precisely the $\G$-orbits in $\cN(\Vb)$ which admit representatives of the form $(v,x)$ with $x = 0$. Neither of these can be induced from a proper Levi subgroup, thanks to Lemma~\ref{lem:induction0-step1}(5).
\end{proof}

\begin{example}
Let $n=13$, so that $\mathcal{N}(\Vb)=\k^{13} \times \mathcal{N}(\mathfrak{gl}_{13})$, and suppose we wish to induce the nilpotent orbit $$\bO:=\bO_{(1^3;\emptyset)} \times \bO_{(1^4;\emptyset)} \times  \bO_{(\emptyset;1^4)} \times  \bO_{(\emptyset;1^2)}$$ for the Levi subgroup $L\cong \GL_{3} \times \GL_{4} \times \GL_{4} \times \GL_{2}$ up to $\GL_{13}$. We will illustrate this by picking representatives of $\bO$ and $\Ind_{\Vb_L}^\Vb (\bO)$, and showing they satisfy the requisite properties. \vspace{5pt}

We first construct a representative $(v,x_n) \in {\rm Ind}_{\Vb_L}^{\Vb}\bO$. From the bipartitions corresponding to $\bO$, we know that $ {\rm Ind}_{\Vb_L}^{\Vb}\bO$ has corresponding bipartition $\left((2^3,1);(2^2,1^2)\right)$. Therefore the element $x_n$ necessarily acts on $\k^{13}$ with Jordan blocks of size $\lambda= \mu+\nu=(4^2,3,2)$. Let 
$$
\{v_{ij} \; : \; 1 \leq i \leq 4, 1 \leq j \leq \lambda_{i}\}
$$
be a Jordan basis for $x_n$ on $\k^{13}$. We picture it via the following diagram, where $x_n$ sends the left-most box of every row to $0$, and every other box to its immediate left. 
\begin{eqnarray*}
\begin{tikzpicture}[scale=0.6]
\draw[-,line width=2pt] (0,4) to (0,0);

\draw (-2,4) -- (2,4) -- (2,3) -- (-2,3) -- cycle;
\draw (-2,3) -- (-2,2) -- (2,2) -- (2,3);
\draw (-2,2) -- (-2,1) -- (1,1); 
\draw (-1,4) -- (-1,0) -- (1,0) -- (1,4);

\node at (-1.5,3.5) {\tiny $v_{11}$};
\node at (-0.5,3.5) {\tiny $v_{12}$};
\node at (0.5,3.5) {\tiny $v_{13}$};
\node at (1.5,3.5) {\tiny $v_{14}$};

\node at (-1.5,2.5) {\tiny $v_{21}$};
\node at (-0.5,2.5) {\tiny $v_{22}$};
\node at (0.5,2.5) {\tiny $v_{23}$};
\node at (1.5,2.5) {\tiny $v_{24}$};

\node at (-1.5,1.5) {\tiny $v_{31}$};
\node at (-0.5,1.5) {\tiny $v_{32}$};
\node at (0.5,1.5) {\tiny $v_{33}$};

\node at (-0.5,0.5) {\tiny $v_{41}$};
\node at (0.5,0.5) {\tiny $v_{42}$};
\end{tikzpicture}
\end{eqnarray*}

If we define the elementary matrix operator $e_{(i,j),(k,l)}$ to act on the Jordan basis as:
$$
e_{\{(i,j),(k,l)\}}  v_{k,l} = \begin{cases}  v_{i,j} \; &\text{if} \; i=k,l=j+1, j \geq 1, \\  0  \; &\text{otherwise}, \end{cases} 
$$
then $x_n$ can be written as:
$$
x_n = \sum_{i=1}^{4} \sum_{j=1}^{\lambda_{i-1}}e_{\{(i,j),(i,j+1)\}}
$$
We may now choose $v$ to be the sum of the vectors to the left of the dividing wall in the above diagram. Thus $v= v_{11} + v_{12} + v_{21} + v_{22} + v_{31} + v_{32} + v_{41}$. It is now clear that the Jordan type of $x_n$ on the submodule $\k[x_n]v$ is $(2^2,1)=\mu$, and on the quotient $\k^{13}/\k[x_n] v$ is $(2^2,1^2)=\nu$. Thus the pair $(v,x_{n})$ is a representative for the induced orbit. \vspace{5pt} 

Finally, we express the representatives for the orbit $\bO$ in terms of the Jordan basis for the action of $x_n$ on $\k^{13}$. If we set $v^{(1)}= v_{11} +  v_{21} +  v_{31}$ and $v^{(2)}=  v_{12}  + v_{22} + v_{32} + v_{41}$, then 
$$
\left((v^{(1)},0), (v^{(2)},0), (0,0), (0,0)\right)
$$
is a canonical representative for $\bO$; where we have identified the columns as the standard bases for the natural modules in each component of the enhanced nilpotent cone for the Levi $L$. 
\end{example}

\subsection{The closure of a general enhanced Jordan class}

We are now in a position to describe the closure of arbitrary enhanced Jordan classes. We know from Corollary \ref{cor:all-induced}  that the  enhanced nilpotent orbit of $(v,x_n)$ in $\Vb$ is induced for  any $v\in \k^n$ and any $x_n\neq0$.
Thus, a similar statement holds for nilpotent orbits in the enhanced module $\Vb_L$ of  any Levi subgroup $L$ of $\G$.

\begin{proposition}\label{prop:closure-general}
Let $(v,x)=(v,x_s+x_n)\in\Vb$.  Let $L=\G^{x_s}$ with Lie algebra $\mf l$ and enhanced module $\Vb_L$.  Let $M$ be a  Levi subgroup of $L$ with enhanced module $\Vb_M$ and  $w\in\k^n$ be such that  $(v,x_n)\in{\rm Ind}_{\Vb_M}^{\Vb_L}(M(w,0))$. Let $Q$ be a parabolic subgroup of $\G$ adapted to $M$ and $v$,  let $\mf q={\rm Lie}(Q)$ and let ${\mf u}_Q$ be its nilradical.  Then  $\mathfrak z(\mathfrak l)\subseteq\mathfrak z(\mf m)$ and 
\begin{align}
\label{eq:closure-Jc-general}&\overline{\mf J(v,x)}= \bigcup_{z\in{\mf z}(\mf l)}\overline{{\rm Ind}_{\Vb_M}^{\Vb}(M(v,z))}= \G(\overline{M v},\mf z(\mf l)+{\mf u}_Q),\\
\label{eq:reg-closure-Jc-general}&{\mf J(v,x)}\subset \overline{\mf J(v,x)}^{\Gbreg}= \bigcup_{z\in{\mf z}(\mf l)}{\rm Ind}_{\Vb_M}^{\Vb}(M(v,z)).
\end{align}
\end{proposition}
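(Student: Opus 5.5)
The plan is to reduce to the case already treated in Proposition~\ref{prop:unique} and the proposition giving \eqref{eq:closure-ss-Jc}, replacing the pair $(v,x_s)$ by the more refined data $(M,v)$. First, since $M\subseteq L=\G^{x_s}$ is a Levi subgroup, $\mf z(\mf l)$ is central in $\mf l\supseteq \mf m$. Moreover $\mf z(\mf l)\subseteq\mf l$ lies in every Cartan of $\mf l$, hence in $\mf m$, and it commutes with $\mf m$. This gives $\mf z(\mf l)\subseteq \mf z(\mf m)$.

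Next I would show $\mf J(v,x)\subseteq \G(\overline{Mv},\mf z(\mf l)+\mf u_Q)$. Choose the adapted parabolic $Q$ so that $Q\cap L$ is a parabolic of $L$ adapted to $M$ and $v$, with nilradical $\mf u_Q\cap\mf l$. We may also assume that the Levi factor of $Q$ containing $L$ is a parabolic $P\supseteq Q$ with Levi factor $L$ adapted to $L$ and $v$; this uses Lemma~\ref{lem:good-parabolic}, by refining a flag adapted to $L,v$ with one in $L$ adapted to $M,v$. Then $\mf u_Q=(\mf u_Q\cap\mf l)\oplus\mf u_P$. By hypothesis and Lemma~\ref{lem:induction0-step1}(3), after $L$-conjugation we have $(v,x_n)\in(\overline{Mv},\mf u_Q\cap \mf l)$. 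For $z\in\mf z(\mf l)^{\Gbreg}$ we have $(v,z+x_n)\in(\overline{Mv},\mf z(\mf l)+\mf u_Q)$, and the latter is $Q$-stable because $Q$ normalises $\mf z(\mf l)+\mf u_Q$ modulo $\mf u_Q$ ($Q\subseteq P$, and $P$ acts trivially on $\mf z(\mf l)$ modulo $\mf u_P$). Arguing as in Lemma~\ref{lem:induction0-step1}(1), the bundle map $\G\times^Q(\overline{Mv},\mf z(\mf l)+\mf u_Q)\to\Vb$ is proper, so its image is closed and irreducible and contains $\overline{\mf J(v,x)}$.

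For equality I compare dimensions as in Proposition~\ref{prop:unique}. The bundle has dimension $\dim\G-\dim M+\dim Mv+\dim\mf z(\mf l)$. On the other hand, \eqref{eq:dimension-enhanced} gives $\dim\mf J=\dim\G(v,x)+\dim\mf z(\mf l)$, and
\[\dim\G(v,x)=\dim\G-\dim L+\dim L(v,x_n)=\dim\G-\dim M+\dim Mv,\]
where the last equality is Lemma~\ref{lem:induction0-step2} applied inside $\Vb_L$ (codimension preservation). So the image has dimension at most $\dim\mf J$ and, being irreducible and closed, equals $\overline{\mf J}$. The middle equality in \eqref{eq:closure-Jc-general} then follows by decomposing over $z\in\mf z(\mf l)$: $\G(\overline{Mv},z+\mf u_Q)=\overline{{\rm Ind}_{\Vb_M}^{\Vb}M(v,z)}$ by Lemma~\ref{lem:induction0-step1}(2) applied with $L$ replaced by $M$, which is legitimate since $z\in\mf z(\mf m)$ by the first step. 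Here Proposition~\ref{prop:centraliser} ensures independence of $Q$.

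For \eqref{eq:reg-closure-Jc-general}, Lemma~\ref{lem:induction0-step2} gives that each ${\rm Ind}_{\Vb_M}^{\Vb}M(v,z)$ has codimension $\codim_{\Vb_M}M(v,z)=\codim_{\Vb_M}M(v,0)$, which is independent of $z$. Hence all these orbits have the maximal dimension $\dim\G(v,x)$. Any other orbit in $\overline{{\rm Ind}}$ is in the boundary and so is smaller, and the regular locus is exactly the union of the induced orbits. The inclusion $\mf J\subseteq\overline{\mf J}^{\Gbreg}$ holds since orbits in $\mf J$ have this dimension.

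The main obstacle is the parabolic bookkeeping: I need a $Q$ adapted to $M$ and $v$ that is simultaneously contained in a $P$ adapted to $L$ and $v$, and I need $(v,x_n)$ to land in $(\overline{Mv},\mf u_Q\cap\mf l)$. I would handle this by building the flag explicitly from the supports of $v$ as in Lemma~\ref{lem:good-parabolic}.
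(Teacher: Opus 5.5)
Your proposal is correct and follows essentially the paper's route: the paper likewise bounds $\dim \G(\overline{Mv},\mf z(\mf l)+\mf u_Q)$ by the dimension of the bundle $\G\times^Q(\overline{Mv},\mf z(\mf l)+\mf u_Q)$, identifies that dimension with $\dim\mf J(v,x)$ via Lemma~\ref{lem:induction0-step2}, and obtains \eqref{eq:reg-closure-Jc-general} from codimension preservation. The one difference is that the paper proves $\mf J(v,x)\subseteq\G(\overline{Mv},\mf z(\mf l)+\mf u_Q)$ through a chain of inclusions coming from Proposition~\ref{prop:centraliser}, rather than by your direct conjugation of $x_n$ into $\mf u_Q\cap\mf l$; moreover, the parabolic bookkeeping you flag as the main obstacle is automatic: for any $Q$ adapted to $M$ and $v$, $Q\cap L$ is a parabolic of $L$ with Levi factor $M$ and nilradical $\mf u_Q\cap\mf l$, adapted to $M$ and $v$, and $(\overline{Mv},\mf z(\mf l)+\mf u_Q)$ is $Q$-stable because $M$ centralises $\mf z(\mf l)\subseteq\mf z(\mf m)$ and $U_Q$ acts trivially on $\mf q/\mf u_Q$, so no special choice of $Q$ or auxiliary $P$ is needed.
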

\begin{proof}
By Lemma \ref{lem:induction0-step1} (3) we may  assume $v=w$.  

The closed subset  $(\overline{M v},\mf z(\mf l)+{\mf u}_Q)$ is irreducible and $Q$-stable and so the image $\G(\overline{M v},\mf z(\mf l)+{\mf u}_Q)$ of the action morphism $\G\times^Q(\overline{M v},\mf z(\mf l)+{\mf u}_Q)\to \Vb$ is closed and irreducible.  Invoking Lemma \ref{lem:induction0-step2}  we have
\begin{align*}
\dim \G(\overline{M v},\mf z(\mf l)+{\mf u}_Q)&\leq \dim \G\times^Q(\overline{M v},\mf z(\mf l)+{\mf u}_Q)\\
&=\dim \G-\dim Q+\dim Mv+\dim{\mf z}(\mf l)+\dim {\mf u}_Q\\
&=\dim\Vb-\codim_{\Vb_M}M(v,0)+\dim{\mf z}(\mf l)\\
&=\dim\Vb-\codim_{\Vb_L}{\rm Ind}_{\Vb_M}^{\Vb_L}M(v,0)+\dim{\mf z}(\mf l)\\
&=\dim\Vb-\codim_{\Vb_L}L(v,x_n)+\dim{\mf z}(\mf l)\\
&=\dim\G-\dim L^{(v,x_n)}+\dim{\mf z}(\mf l)\\
&=\dim\G-\dim {\G}^{(v,x)}+\dim{\mf z}(\mf l)\\
&=\dim{\mf J}(v,x).
\end{align*}
We  now show that $\mf J(v,x)\subseteq \G( \overline{M v},\mf z(\mf l)+{\mf u}_Q)$.  Indeed, by Proposition \ref{prop:centraliser}
\begin{align*}
\mf J(v,x)&=\G\left(v,{\mf z}(\mf l)^{\Gbreg}+x_n\right)= \bigcup_{z\in{\mf z}(\mf l)^{\Gbreg}}\G\left((0,z)+{\rm Ind}_{\Vb_M}^{\Vb_L}M(v,0)\right)\\
&\subseteq \bigcup_{z\in{\mf z}(\mf l)^{\Gbreg}}\G\left((0,z)+\overline{{\rm Ind}_{\Vb_M}^{\Vb_L}(M(v,0))}\right)\\
&\subseteq \bigcup_{z\in{\mf z}(\mf l)}\G\left((0,z)+\overline{{\rm Ind}_{\Vb_M}^{\Vb_{\G^z}}M(v,0)}\right)\\
&\subseteq \bigcup_{z\in{\mf z}(\mf l)}\overline{{\rm Ind}_{\Vb_M}^{\Vb}M(v,z)}= \bigcup_{z\in{\mf z}(\mf l)}\G(\overline{Mv},z+\mf u_Q)= \G(\overline{M v},\mf z(\mf l)+{\mf u}_Q)
\end{align*}
giving \eqref{eq:closure-Jc-general}. Then, \eqref{eq:reg-closure-Jc-general} follows from Lemma \ref{lem:induction0-step2} and Proposition \ref{prop:centraliser}.
\end{proof}

\begin{proposition}\label{prop:closure-enhanced}
The closure and the regular closure of enhanced Jordan classes are union of Jordan classes.
\end{proposition}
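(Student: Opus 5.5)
Both sets are $\G$-stable, and every point of $\Vb$ lies in exactly one enhanced Jordan class. So it suffices to show: if $(w,y)\in\overline{\mf J(v,x)}$ (respectively in the regular closure), then the whole class $\mf J(w,y)$ is contained in $\overline{\mf J(v,x)}$ (respectively in the regular closure). I will use the description from Proposition~\ref{prop:closure-general}, namely
$$\overline{\mf J(v,x)}=\bigcup_{z\in\mf z(\mf l)}\overline{{\rm Ind}_{\Vb_M}^{\Vb}M(v,z)},\qquad \overline{\mf J(v,x)}^{\Gbreg}=\bigcup_{z\in\mf z(\mf l)}{\rm Ind}_{\Vb_M}^{\Vb}M(v,z).$$
Here $M\subseteq L=\G^{x_s}$ is a Levi subgroup with $(v,x_n)\in {\rm Ind}_{\Vb_M}^{\Vb_L}M(v,0)$.

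\textbf{Regular closure.} Take $(w,y)\in{\rm Ind}_{\Vb_M}^{\Vb}M(v,z)$ for some $z\in\mf z(\mf l)$. By Proposition~\ref{prop:centraliser}, up to $\G$-conjugacy we have $y_s=z$ and $(w,y_n)\in{\rm Ind}_{\Vb_M}^{\Vb_{\G^z}}M(v,0)$. An element of the same class is $(w,z'+y_n)$ with $z'\in\mf z(\gl_n^z)^{\Gbreg}$, up to $\G$-action. Since $\G^{z'}=\G^{z}$ and $\mf z(\gl_n^z)\subseteq\mf z(\mf m)$, I must check that $z'$ can be taken in $\mf z(\mf l)$, or at least that ${\rm Ind}_{\Vb_M}^{\Vb}M(v,z')$ lies in the regular closure.

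The point is that $\mf z(\gl_n^z)\subseteq\mf z(\mf l)$. This holds because $z\in\mf z(\mf l)$ gives $\mf l\subseteq\gl_n^z$, hence $\mf z(\gl_n^z)\subseteq\mf z(\mf l)$. Applying Proposition~\ref{prop:centraliser} again,
$${\rm Ind}_{\Vb_M}^{\Vb}M(v,z')=\G\bigl((0,z')+{\rm Ind}_{\Vb_M}^{\Vb_{\G^{z}}}M(v,0)\bigr)\ni (w,z'+y_n).$$
Hence $\mf J(w,y)=\G(w,\mf z(\gl_n^z)^{\Gbreg}+y_n)$ lies in the union over $z'\in\mf z(\mf l)$, that is, in the regular closure.

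\textbf{Closure.} Take $(w,y)\in\overline{{\rm Ind}_{\Vb_M}^{\Vb}M(v,z)}$. By Proposition~\ref{prop:closure-general} this set is $\G(\overline{Mv},z+\mf u_Q)$. As in Lemma~\ref{lem:induction0-step1}(2), the semisimple part of $y$ can be taken equal to $z$, and $(w,y_n)$ lies in the closure of ${\rm Ind}_{\Vb_M}^{\Vb_{\G^z}}M(v,0)$ inside $\Vb_{\G^z}$. In fact, the image of $\G^z\times^{Q^z}(\overline{Mv},\mf u_Q\cap\gl_n^z)$ is this closure.

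Now replace $z$ by $z'\in\mf z(\gl_n^z)^{\Gbreg}\subseteq\mf z(\mf l)$. Since $\G^{z'}=\G^z$, the same nilpotent piece $(w,y_n)$ lies in $\overline{{\rm Ind}_{\Vb_M}^{\Vb_{\G^{z'}}}M(v,0)}$. Therefore $(w,z'+y_n)\in\overline{{\rm Ind}_{\Vb_M}^{\Vb}M(v,z')}\subseteq\overline{\mf J(v,x)}$, and so $\mf J(w,y)\subseteq\overline{\mf J(v,x)}$.

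\textbf{Main obstacle.} The delicate step is the claim that the nilpotent parts of points of $\overline{{\rm Ind}_{\Vb_M}^{\Vb}M(v,z)}$ with semisimple part exactly $z$ are exactly those in $\overline{{\rm Ind}_{\Vb_M}^{\Vb_{\G^z}}M(v,0)}$. The inclusion of the closure into $\G(\overline{Mv},z+\mf u_Q)$ is clear. For the converse I would conjugate $z+u$ by $U_Q$ into $z+(\mf u_Q\cap\gl_n^z)$, as in Lemma~\ref{lem:induction0-step1}(2)–(3). One must check that this conjugation keeps the vector in $\overline{Qv}=\overline{Mv}$; this is where adaptedness of $Q$ is used. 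One must also check that $Q^z$ is adapted to $M$ and $v$ for $\G^z$, which is done as in the proof of Proposition~\ref{prop:centraliser}.
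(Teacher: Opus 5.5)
Your proposal is correct and follows essentially the paper's argument. In both, a point of $\overline{\mf J(v,x)}=\G(\overline{Mv},\mf z(\mf l)+\mf u_Q)$ is conjugated so that its semisimple part is some $z\in\mf z(\mf l)$ and its nilpotent part lies in $\mf u_Q$. Then $\mf z(\gl_n^{z})\subseteq\mf z(\mf l)$, so $z$ may be replaced by any $z'\in\mf z(\gl_n^{z})^{\Gbreg}$ without leaving the closure. Your explicit $U_Q$-conjugation, which uses adaptedness of $Q$ to keep the vector in $\overline{Mv}$, makes precise a step the paper passes over quickly.

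The only difference is in the regular closure. The paper proves the closure statement alone and deduces the regular-closure one: classes have constant orbit dimension, and the regular closure is the intersection of the closure with the relevant $\Vb_{(d)}$. You instead check the regular closure directly via \eqref{eq:reg-closure-Jc-general} and Proposition~\ref{prop:centraliser}.
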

\begin{proof}
It is enough to show the statement for the closure of a Jordan class.  We retain notation from Proposition \ref{prop:closure-general}.  
Let  \begin{align*}
(w,y_s+y_n)\in  \overline{\mf J(v,x)}= \G(\overline{M v},\mf z(\mf l)+{\mf u}_Q).
\end{align*}
 Since Jordan classes are $\G$-stable, there is no loss of generality in assuming that $(w,y_s+y_n)\in ( \overline{M v},\mf z(\mf l)+{\mf u}_Q)$.  
 Then, $y_n\in {\mf u}_Q$ and $\gl_n^{y_s}\supset\mf l$, so  $\mf z(\gl_n^{y_s})^{\Gbreg}\subset \mf z(\gl_n^{y_s})\subset\mf z(\mf l)$. 
Hence, 
 \[\mf J(w,y_s+y_n)=\G\left(w,\mf z(\gl_n^{y_s})^{\Gbreg} +y_n\right)\subseteq  \G(\overline{M v},\mf z(\mf l)+{\mf u}_Q).\]
 \end{proof}

We are now in a position to extend Lemma \ref{lem:induction0-step2} and Proposition \ref{prop:unique} to arbitrary enhanced Jordan classes. 

\begin{corollary}\label{cor:induced-orbit}
Let $(v,x)=(v,x_s + x_n) \in \Vb$, let $L=\G^{x_s}$ and let ${\mf J}={\mf J}(v,x)$. 
Then, $\overline{{\mathfrak J}(v,x)}\cap{\mathcal N}(\Vb)$ is the closure of a $\G$-orbit $\bO$ in $\mathcal{N}(\Vb)$ satisfying 
$\codim_{\Vb}\bO=\codim_{\Vb_L}L (v,x)$. 
  
\end{corollary}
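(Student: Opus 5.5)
We read off the nilpotent part of the closure directly from Proposition~\ref{prop:closure-general}. Choose, by Corollary~\ref{cor:all-induced}(1) applied to the enhanced module $\Vb_L$ (a product of enhanced modules), a Levi subgroup $M\subseteq L$ and $w\in\k^n$ with $(v,x_n)\in{\rm Ind}_{\Vb_M}^{\Vb_L}(M(w,0))$. By Lemma~\ref{lem:induction0-step1}(3) we may take $w=v$. Let $Q$ be a parabolic subgroup of $\G$ adapted to $M$ and $v$. Proposition~\ref{prop:closure-general} then gives
$$\overline{\mf J(v,x)}=\G(\overline{Mv},\mf z(\mf l)+\mf u_Q).$$

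Intersecting with $\cN(\Vb)=\k^n\times\cN(\gl_n)$ is done exactly as in the proof of Proposition~\ref{prop:unique}. Take an element $(w',z+u)$ with $z\in\mf z(\mf l)\subseteq\mf z(\mf m)$ and $u\in\mf u_Q$. Its $\gl_n$-component $z+u$ lies in $\mf q$, and its image in $\mf q/\mf u_Q\simeq\mf m$ is $z$. Hence it is nilpotent if and only if $z=0$, since the semisimple part of $z+u$ is $Q$-conjugate to $z$. Since $\cN(\Vb)$ is $\G$-stable, this yields
$$\overline{\mf J(v,x)}\cap\cN(\Vb)=\G(\overline{Mv},\mf u_Q)=\overline{{\rm Ind}_{\Vb_M}^{\Vb}M(v,0)}.$$
The last equality is Lemma~\ref{lem:induction0-step1}(2) with $x_s=0$ (equivalently Proposition~\ref{prop:unique} applied with $M$ in place of $L$). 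So the intersection is the closure of the single orbit $\bO:={\rm Ind}_{\Vb_M}^{\Vb}M(v,0)$.

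For the codimension we chain Lemma~\ref{lem:induction0-step2} twice, using transitivity in the form already implicit in the dimension computation of Proposition~\ref{prop:closure-general}. First,
$$\codim_{\Vb}\bO=\codim_{\Vb_M}M(v,0)$$
by Lemma~\ref{lem:induction0-step2} for the pair $(\G,M)$. Second, applying Lemma~\ref{lem:induction0-step2} to the pair $(L,M)$, with $L$ playing the role of the ambient group,
$$\codim_{\Vb_M}M(v,0)=\codim_{\Vb_L}{\rm Ind}_{\Vb_M}^{\Vb_L}M(v,0)=\codim_{\Vb_L}L(v,x_n).$$

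Finally, $L=\G^{x_s}$ and $(0,x_s)$ is fixed by $L$ and central in $\Vb_L$. So translating by $(0,x_s)$ is an $L$-equivariant automorphism of $\Vb_L$, which gives $\codim_{\Vb_L}L(v,x_n)=\codim_{\Vb_L}L(v,x)$ and completes the argument.

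The step needing most care is applying Lemma~\ref{lem:induction0-step2} inside $L$ rather than $\G$. That lemma is stated for $\G=\GL_n$, whereas $L$ is a product $\prod\GL_{\lambda_i}$ and $\Vb_L$ is the external direct sum of the enhanced modules of its factors. I would handle this by applying the lemma factor by factor, noting that codimensions add over the product. The same dimension count is already carried out in the chain of equalities in the proof of Proposition~\ref{prop:closure-general}, so it could be cited directly.
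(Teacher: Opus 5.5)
Your proof is correct. Your first half, identifying $\overline{\mf J(v,x)}\cap\cN(\Vb)=\G(\overline{Mv},\mf u_Q)=\overline{{\rm Ind}_{\Vb_M}^{\Vb}M(v,0)}$ from \eqref{eq:closure-Jc-general} via the Jordan decomposition, is what the paper does.

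For the codimension the paper argues differently. It invokes \eqref{eq:reg-closure-Jc-general} to see that $\bO$ lies in the regular locus $\overline{\mf J(v,x)}^{\Gbreg}$, so that $\dim\bO=\dim\G(v,x)$. It then uses $\G^{(v,x)}\subseteq\G^{x_s}=L$ to get $\codim_{\Vb}\bO=n+\dim\G^{(v,x)}=n+\dim L^{(v,x)}=\codim_{\Vb_L}L(v,x)$.

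You instead chain Lemma~\ref{lem:induction0-step2} through $M$, once in $\G$ and once in $L$ (handled factor by factor), and then translate by the $L$-fixed element $(0,x_s)$. The two routes have the same underlying input. The paper derives \eqref{eq:reg-closure-Jc-general} from Lemma~\ref{lem:induction0-step2}, and the step inside $L$ already appears in the dimension count proving Proposition~\ref{prop:closure-general}. Your version makes that dependence explicit and bypasses both the regular-closure formula and the stabiliser comparison. The paper's version is shorter once \eqref{eq:reg-closure-Jc-general} is available.
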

\begin{proof}Let  $M$  and ${\mf u}_Q$ be as in Proposition \ref{prop:closure-general}.
It follows from \eqref{eq:closure-Jc-general} and the Jordan decomposition that $\overline{{\mathfrak J}(v,x)}\cap{\mathcal N}(\Vb)=\G({\mf u}_Q, \overline{M v})$.
Since ${\mf u}_Q$ and $\overline{M v}$ are irreducible, $\overline{{\mathfrak J}(v,x)}\cap{\mathcal N}(\Vb)$ is irreducible and $\G$-stable. Hence, it is  the closure of an orbit $\bO$ in $\mathcal{N}(\Vb)$. In addition, \eqref{eq:reg-closure-Jc-general} guarantees that $\dim \bO=\dim \G (v,x)$, whence 
\begin{align}\label{eq:codim}\codim_{\Vb}\bO&=\dim \Vb-\dim\bO=n+\dim \G^{(v,x)}\\
\nonumber&=n+\dim L^{(v,x)}=\dim \Vb_L-\dim L^{(v,x)}=\codim_{\Vb_L}L (v,x).\end{align} 
\end{proof}

We call the nilpotent orbit $\bO$ from Corollary \ref{cor:induced-orbit} the \emph{induced enhanced nilpotent  orbit} of $L(v,x_n)$ and we denote it by ${\rm Ind}_{\Vb_L}^\Vb L (v,x_n)$.

\begin{proposition}
\label{prop:induction-transitive}
Induction of enhanced nilpotent orbits is transitive.
\end{proposition}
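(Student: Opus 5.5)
Transitivity means the following. Take Levi subgroups $M\subseteq L\subseteq \G$ and a nilpotent orbit $M(w,y)\subseteq\cN(\Vb_M)$. Then
\[
{\rm Ind}_{\Vb_L}^{\Vb}\bigl({\rm Ind}_{\Vb_M}^{\Vb_L}M(w,y)\bigr)={\rm Ind}_{\Vb_M}^{\Vb}M(w,y).
\]
Here induction of a nilpotent orbit $M(w,y)$ is the unique orbit $\bO$ with $\overline{\bO}=\overline{\J(w,z+y)}\cap\cN(\Vb)$, where $z\in\mf z(\mf m)^{\Gbreg}$ (Corollary~\ref{cor:induced-orbit}). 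The plan is to reduce everything to induction from orbits of the form $(v,0)$, where Proposition~\ref{prop:closure-general} gives an explicit parabolic description.

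First I reduce to the case $y=0$. By Corollary~\ref{cor:all-induced}(1), applied to each factor of $M$, there is a Levi $K\subseteq M$ and $v$ with $M(w,y)={\rm Ind}_{\Vb_K}^{\Vb_M}K(v,0)$. By Proposition~\ref{prop:closure-general} applied inside $\Vb_M$, together with Lemma~\ref{lem:induction0-step1}(3), we may take $w=v$. The key formula to establish is the following: for any Levi $K\subseteq L$ and any $v$,
\[
{\rm Ind}_{\Vb_L}^{\Vb}\bigl({\rm Ind}_{\Vb_K}^{\Vb_L}K(v,0)\bigr)={\rm Ind}_{\Vb_K}^{\Vb}K(v,0). \qquad (\ast)
\]
To prove $(\ast)$, apply Proposition~\ref{prop:closure-general} with $x_s=z\in\mf z(\mf l)^{\Gbreg}$, so that $L=\G^{x_s}$. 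Intersecting \eqref{eq:closure-Jc-general} with $\cN(\Vb)$ as in Corollary~\ref{cor:induced-orbit} shows that the left side of $(\ast)$ has closure $\G(\overline{Kv},\mf u_Q)$, where $Q$ is adapted to $K$ and $v$. By Proposition~\ref{prop:unique}, this is exactly $\overline{{\rm Ind}_{\Vb_K}^{\Vb}K(v,0)}$.

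With $(\ast)$ in hand, the general statement follows. I have
\[
{\rm Ind}_{\Vb_M}^{\Vb_L}M(w,y)={\rm Ind}_{\Vb_M}^{\Vb_L}{\rm Ind}_{\Vb_K}^{\Vb_M}K(v,0)={\rm Ind}_{\Vb_K}^{\Vb_L}K(v,0),
\]
by $(\ast)$ applied to $K\subseteq M\subseteq L$. Applying ${\rm Ind}_{\Vb_L}^{\Vb}$ and then $(\ast)$ once more gives ${\rm Ind}_{\Vb_K}^{\Vb}K(v,0)$. The same formula $(\ast)$ applied to $K\subseteq M\subseteq\G$ identifies this with ${\rm Ind}_{\Vb_M}^{\Vb}M(w,y)$.

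The main obstacle is checking that the definition of induction via the Jordan class (Corollary~\ref{cor:induced-orbit}) agrees with the parabolic construction (Lemma~\ref{lem:induction0-step1}) when the orbit being induced is $M(v,0)$. Concretely, I must check that a parabolic $Q$ adapted to $K$ and $v$ in $\G$ meets $L$ in a parabolic adapted to $K$ and $v$ there. This compatibility follows from Lemma~\ref{lem:good-parabolic}, as in the proof of Proposition~\ref{prop:centraliser}. I must also check that Lemma~\ref{lem:induction0-step1}(3) allows the vector to be kept fixed when replacing representatives. Once this is done, the dimension count of Lemma~\ref{lem:induction0-step2} and irreducibility pin down the orbit uniquely.
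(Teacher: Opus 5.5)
Your proof is correct, but it is organised differently from the paper's. The paper argues directly for an arbitrary $(v,x_n)\in\cN(\Vb_M)$:
\begin{itemize}
\item for $(w,y_n)\in{\rm Ind}_{\Vb_M}^{\Vb_L}M(v,x_n)$ it starts from $\overline{{\rm Ind}_{\Vb_L}^{\Vb}L(w,y_n)}=\cN(\Vb)\cap\overline{\G(w,\mf z(\mf l)^{\Gbreg}+y_n)}$;
\item it uses $(w,y_n)\in\cN(\Vb_L)\cap\overline{L(v,\mf z(\mf m)+x_n)}$ and $\mf z(\mf l)\subseteq\mf z(\mf m)$ to obtain $\overline{{\rm Ind}_{\Vb_L}^{\Vb}{\rm Ind}_{\Vb_M}^{\Vb_L}M(v,x_n)}\subseteq\overline{{\rm Ind}_{\Vb_M}^{\Vb}M(v,x_n)}$;
\item it concludes because iterating \eqref{eq:codim} gives both orbits the same dimension.
\end{itemize}

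You instead use Corollary~\ref{cor:all-induced}(1) to write every nilpotent $M$-orbit as ${\rm Ind}_{\Vb_K}^{\Vb_M}K(v,0)$. You then observe that, for such orbits, transitivity $(\ast)$ is already contained in Proposition~\ref{prop:closure-general} together with Proposition~\ref{prop:unique}, since both sides have closure $\G(\ov{Kv},\mf u_Q)$.

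The paper's argument is formal. Granted the Jordan-class definition of induction and preservation of codimension, it needs no knowledge of which orbits are induced from which. Yours uses the reduction to data $(K,Kv)$ explicitly; this is the same input that makes Corollary~\ref{cor:induced-orbit} work. It also requires applying $(\ast)$ with ambient group $L$ or $M$, which relies on the routine compatibility of Jordan classes and induction with products of general linear groups. The paper uses this compatibility tacitly as well. In return, your route needs no new dimension count and identifies the closure of the iterated induced orbit explicitly.

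Two smaller points. The ``main obstacle'' you single out is exactly Proposition~\ref{prop:unique}. Adaptedness of $Q\cap L$ is immediate from $(Q\cap L)v\subseteq Qv\subseteq\ov{Kv}$, as in the proof of Proposition~\ref{prop:centraliser}. The normalisation $w=v$ is unnecessary, since transitivity is a statement about orbits.
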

\begin{proof}Let $M$ and $L$ be Levi subgroups of $\G$ with $M\subset L$, let $\mf m$ and $\mf l$ be the respective Lie algebras, and let $\Vb_M$ and $\Vb_L$ be the respective enhanced modules.   Let $(v,x_n)\in \mathcal N(\Vb_M)$ and let $(w,y_n)\in{\rm Ind}_{\Vb_M}^{\Vb_L}M(v,x_n)$. Then
\begin{align*}
\overline{{\rm Ind}_{\Vb_L}^{\Vb}\left({\rm Ind}_{\Vb_M}^{\Vb_L}M(v,x_n)\right)}&=\mathcal N(\Vb)\cap\overline{\G(w,\mathfrak z(\mf l)^{\Gbreg}+y_n)}\\
&\subseteq \mathcal N(\Vb)\cap\overline{\G\left((0,\mathfrak z(\mf l))+\left(\mathcal N(\Vb_L)\cap \overline{L(v,\mf z(\mf m)^{L\operatorname{-reg}}+x_n)}\right)\right)}\\
&\subseteq \mathcal N(\Vb)\cap\overline{\G\left((0,\mathfrak z(\mf l))+L(v,\mf z(\mf m)+x_n)\right)}\\
&= \mathcal N(\Vb)\cap\overline{\G\left(L(v,\mf z(\mf l)+\mf z(\mf m)+x_n)\right)}\\
&= \mathcal N(\Vb)\cap\overline{\G\left(L(v,\mf z(\mf m)+x_n)\right)}\\
&=\mathcal N(\Vb)\cap\overline{\G\left(v,\mf z(\mf m)+x_n)\right)}=\overline{{\rm Ind}_{\Vb_M}^{\Vb}M(v,x_n)}.
\end{align*}
Iterated application of \eqref{eq:codim} shows that the first and the last terms in the sequence of inclusions have the same dimension, giving the statement.
\end{proof}

Induction is compatible with the embedding of $\Vb$ into $\Vb_{2n}$.

\begin{proposition}
\label{prop:double-induction}
Let $(\mathbf v,\mathbf x)=(\mathbf v,\mathbf x_s+\mathbf x_n)\in\Vb$ and $(v,x)=(v, x_s+x_n)=\psi(\varphi(\mathbf v,\mathbf x))\in \Vb_{2n}$.  Then 
\begin{equation*}\psi\left(\GL_{2n}\times^G\varphi\left(1\times^\G\left( {\rm Ind}_{\Vb_{\G^{\mathbf x_s}}}^{\Vb}\G^{x_s}(\mathbf v,\mathbf x_n)\right)\right)\right)={\rm Ind}_{\Vb_{\GL_{2n}^{x_s}}}^{\Vb}\GL_{2n}^{x_s}(v,x_n).\end{equation*} 
\end{proposition}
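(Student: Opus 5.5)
The plan is to reduce the statement to the description of induced orbits via closures of Jordan classes, and then transport closures along $\Vb\subseteq\V\subseteq\Vb_{2n}$ using the dimension-doubling results. By Corollary~\ref{cor:induced-orbit}, the left hand side is built from the orbit $\bO$ that is dense in $\overline{\mf J(\mathbf v,\mathbf x)}\cap\cN(\Vb)$. The right hand side is the orbit dense in $\overline{\mf J_{2n}(v,x)}\cap\cN(\Vb_{2n})$, where $\mf J_{2n}$ denotes an enhanced Jordan class in $\Vb_{2n}$. By Theorem~\ref{thm:main-bijection}(1),(2), the saturation $\psi(\GL_{2n}\times^G\varphi(G\times^\G\mf J(\mathbf v,\mathbf x)))$ is exactly $\mf J_{2n}(v,x)$. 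It therefore suffices to show two things:
\begin{itemize}
\item the $\GL_{2n}$-saturation of $\bO$ lies in $\overline{\mf J_{2n}(v,x)}\cap\cN(\Vb_{2n})$;
\item this saturation has the same dimension as that intersection.
\end{itemize}
Irreducibility of the intersection (Corollary~\ref{cor:induced-orbit}) then forces the saturated orbit to be its dense orbit.

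For containment, note that $\bO\subseteq\overline{\mf J(\mathbf v,\mathbf x)}$ and that $\varphi,\psi$ are continuous. Hence $\psi(\varphi(\bO))\subseteq\overline{\psi\varphi(\mf J)}\subseteq\overline{\mf J_{2n}(v,x)}$, and the $\GL_{2n}$-saturation stays inside because the right hand side is $\GL_{2n}$-stable and closed. Nilpotency is preserved, since by Lemma~\ref{lem:poset-nil} $\psi\varphi$ maps $\cN(\Vb)$ into $\cN(\Vb_{2n})$ (the matrix part stays nilpotent). By Lemma~\ref{lem:poset-nil}(2), the saturation of $\psi\varphi(\bO)$ is a single nilpotent $\GL_{2n}$-orbit, with bipartition obtained by doubling.

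For dimensions, Lemma~\ref{lem:poset-nil}(2) gives $\dim\GL_{2n}\psi\varphi(\bO)=4\dim\bO$. By Corollary~\ref{cor:induced-orbit},
\[
\dim\bO=\dim\Vb-\codim_{\Vb_{\G^{\mathbf x_s}}}\G^{\mathbf x_s}(\mathbf v,\mathbf x)=\dim\G(\mathbf v,\mathbf x),
\]
using \eqref{eq:codim}. Likewise, the dense orbit of $\overline{\mf J_{2n}(v,x)}\cap\cN(\Vb_{2n})$ has dimension $\dim\GL_{2n}(v,x)$. Theorem~\ref{prop:double-dim}(3),(4) give $\dim\GL_{2n}(v,x)=2\dim G\varphi(\mathbf v,\mathbf x)=4\dim\G(\mathbf v,\mathbf x)$, so the two dimensions agree.

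The main point to check carefully is that $\psi\varphi$ respects Jordan decompositions, i.e.\ that $x_s=\psi\varphi(\mathbf x_s)$ and $x_n=\psi\varphi(\mathbf x_n)$. This was observed after Remark~\ref{r:JKVnonunique}, and it is what makes the Jordan class of $(v,x)$ the saturation of the image of $\mf J(\mathbf v,\mathbf x)$. A second subtlety is that the left hand side is written as an image, not a saturation. Since $\psi(\GL_{2n}\times^G\cdot)$ is by definition the $\GL_{2n}$-saturation, the identification with a single orbit is exactly Lemma~\ref{lem:poset-nil}(2).
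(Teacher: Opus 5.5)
Your proposal is correct and follows essentially the same route as the paper. You first show that the $\GL_{2n}$-saturation of the image of the induced orbit lies in $\cN(\Vb_{2n})\cap\overline{\mf J_{2n}(v,x)}$ by continuity and equivariance, and then conclude by comparing dimensions via Corollary~\ref{cor:induced-orbit} and the dimension-doubling results; your explicit tracking of the factor $4$ through Lemma~\ref{lem:poset-nil}(2) and Theorem~\ref{prop:double-dim}(3),(4) is, if anything, cleaner than the paper's own dimension count.
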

\begin{proof}
We denote by $\mathfrak J_{2n}(v,x)$ the enhanced Jordan class of $(v,x)$. Then
\begin{align*}
\psi\left(\GL_{2n}\times^G\varphi\left(1\times^\G\left(\overline{ \mathfrak J(\mathbf v,\mathbf x)}\cap\mathcal N(\Vb)\right)\right)\right)&
\subseteq\mathcal N(\Vb_{2n})\cap \psi\left(\GL_{2n}\times^G\varphi\left(1\times^\G\overline{\mathfrak{J}(\mathbf v,\mathbf x)}\right)\right)\\
&= \mathcal N(\Vb_{2n})\cap \overline{\psi(\GL_{2n}\times^G\varphi(1\times^\G\mathfrak J(\mathbf v,\mathbf x))}\\
&=\mathcal N(\Vb_{2n})\cap \overline{{\mathfrak J}_{2n}(v,x)} 
\end{align*}
and so $\psi\left(\GL_{2n}\times^G\varphi\left(1\times^\G\left( {\rm Ind}_{\Vb_{\G^{\mathbf x_s}}}^{\Vb}\G^{x_s}(\mathbf v,\mathbf x_n)\right)\right)\right)\subseteq\overline{{\rm Ind}_{\Vb_{\GL_{2n}^{x_s}}}^{\Vb}\GL_{2n}^{x_s}(v,x_n)}$.  Combining Proposition \ref{prop:double-dim} (2) and Corollary \ref{cor:induced-orbit} gives
\begin{align*}
\dim \GL_{2n}\times^G\left(1\times^\G\left( {\rm Ind}_{\Vb_{\G^{\mathbf x_s}}}^{\Vb}\G^{x_s}(\mathbf v,\mathbf x_n)\right)\right)
&=2\dim {\rm Ind}_{\Vb_{\G^{\mathbf x_s}}}^{\Vb}\G^{x_s}(\mathbf v,\mathbf x_n)\\
&=2\left(\dim \Vb-\dim\Vb_{\G^{\mathbf X_s}}+\dim \G^{\mathbf x_s}(\mathbf v,\mathbf x_n)\right)\\
&=\dim\GL_{2n}(0,x_s)+\dim \GL_{2n}^{x_s}(v,x)\\
&=\dim {\rm Ind}_{\Vb_{\GL_{2n}^{x_s}}}^{\Vb}\GL_{2n}^{x_s}(v,x_n)
\end{align*}
yielding the desired equality. 
\end{proof}

\begin{proposition}\label{prop:closure-double}
The assignment $\mathfrak J\mapsto \psi(\GL_{2n}\times^G\varphi(1\times^\G\mathfrak J))$ induces a injective poset morphism from the set of enhanced Jordan classes in $\Vb$ to the set of enhanced Jordan classes in $\Vb_{2n}$. 
\end{proposition}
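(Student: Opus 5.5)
The map $\Theta\colon\mathfrak J\mapsto \psi(\GL_{2n}\times^G\varphi(1\times^\G\mathfrak J))$ is the composite of the two assignments in Theorem~\ref{thm:main-bijection}. By part (1) there it is well defined, landing in enhanced Jordan classes of $\Vb_{2n}$. Part (1) is a bijection and part (2) is injective, so $\Theta$ is injective, with left inverse $\mathfrak J_{2n}\mapsto \mathfrak J_{2n}\cap\Vb$. It remains to show that $\Theta$ preserves the dominance order: if $\mathfrak J'\subseteq\overline{\mathfrak J}$ then $\Theta(\mathfrak J')\subseteq\overline{\Theta(\mathfrak J)}$. Proposition~\ref{prop:closure-enhanced} guarantees that closures of classes in both $\Vb$ and $\Vb_{2n}$ are unions of classes, so the order is meaningful. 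I will also record that the order is reflected.

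\emph{Monotonicity.} Write $\Phi(X)=\psi(\GL_{2n}\times^G\varphi(G\times^\G X))$ for $\G$-stable $X\subseteq\Vb$, so that $\Phi(X)=\GL_{2n}X$ inside $\Vb_{2n}$ via \eqref{eq:towerofinclusions}. If $\mathfrak J'\subseteq\overline{\mathfrak J}$, then
\[
\Theta(\mathfrak J')=\GL_{2n}\mathfrak J'\subseteq \GL_{2n}\overline{\mathfrak J}\subseteq\overline{\GL_{2n}\mathfrak J}=\overline{\Theta(\mathfrak J)},
\]
where the last inclusion holds because each translate $g\overline{\mathfrak J}=\overline{g\mathfrak J}$ lies in $\overline{\GL_{2n}\mathfrak J}$. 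This step is formal.

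\emph{Order reflection.} Suppose $\Theta(\mathfrak J')\subseteq\overline{\Theta(\mathfrak J)}$. Intersecting with $\Vb$ gives $\mathfrak J'\subseteq\overline{\GL_{2n}\mathfrak J}\cap\Vb$, and the task is to show that this set equals $\overline{\mathfrak J}$. I would use the explicit description of Proposition~\ref{prop:closure-general}: $\overline{\mathfrak J}=\G(\overline{Mv},\mathfrak z(\mathfrak l)+\mathfrak u_Q)=\bigcup_{z}\overline{\Ind_{\Vb_M}^{\Vb}M(v,z)}$. Doubling everything gives Levi subgroups $\GL_{2n}^{x_s}\supseteq \GL_{2\lambda}$ together with the adapted parabolic of $\GL_{2n}$ that corresponds to $Q$, and Proposition~\ref{prop:double-induction} identifies the doubled induced orbits. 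The resulting expression is
\[
\overline{\Theta(\mathfrak J)}=\bigcup_{z\in\mathfrak z(\mathfrak l)}\overline{\GL_{2n}\bigl((0,z)+\Ind\ \text{(doubled)}\bigr)}.
\]
I would then intersect with $\Vb$ one semisimple part at a time. Points of $\Vb$ with semisimple part $\GL_{2n}$-conjugate to some $z$ can be taken to have semisimple part exactly $z$, by Remark~\ref{rem:normalisers}(2). For their nilpotent parts, Lemma~\ref{lem:poset-nil}(2) says the poset inclusion of nilpotent orbits reflects order, which yields $(\mu;\nu)\le(\mu';\nu')$ whenever the doubles satisfy $(\mu\cup\mu;\nu\cup\nu)\le(\mu'\cup\mu';\nu'\cup\nu')$. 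This gives $\overline{\Theta(\mathfrak J)}\cap\Vb=\overline{\mathfrak J}$.

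The main obstacle is this last reflection step, specifically keeping track of the possible $z\in\mathfrak z(\mathfrak l)$ whose centraliser in $\gl_{2n}$ jumps. I would handle it exactly as in the proof of Theorem~\ref{prop:double-dim}(2), reducing to a fixed semisimple part and applying Lemma~\ref{lem:poset-nil} to the Levi subgroup $\G^z$. For the statement as posed (an injective poset morphism), the monotonicity argument together with Theorem~\ref{thm:main-bijection} already suffices.
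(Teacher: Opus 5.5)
Your proof is correct for the statement as written, and it takes a different and more economical route than the paper.

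\textbf{Your route versus the paper's.} You get injectivity by composing Theorem~\ref{thm:main-bijection}(1) and (2). Note that part (2) is also needed for $\Theta$ to be well defined, not only part (1). Monotonicity then follows from the purely formal inclusion $\GL_{2n}\overline{\mathfrak J}\subseteq\overline{\GL_{2n}\mathfrak J}$. The paper argues differently: it rewrites $\overline{\mathfrak J}$ through Proposition~\ref{prop:closure-general}, transports the induced orbits to $\Vb_{2n}$ with Proposition~\ref{prop:double-induction}, and asserts the set equality $\GL_{2n}\overline{\mathfrak J}=\overline{\GL_{2n}\mathfrak J}$, i.e.\ that $\Theta$ commutes with closures. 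Theorem~\ref{thm:final-poset-bijection}(1) later quotes exactly this equality.

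\textbf{The paper's equality fails in general; only your inclusion holds.} Take $n=2$ and $\mathfrak J=\mathfrak J(e_2,E_{12})$, the class of type $(2;\emptyset)$, where $E_{ij}$ denote matrix units.
The image of $(e_2,E_{12})$ in $\Vb_4$ is $(e_2,E_{12}+E_{43})$. Conjugating by ${\rm diag}(1,1,1,t)$ shows that $(e_2,E_{12}+tE_{43})\in\GL_4\mathfrak J$ for every $t\neq 0$. Hence the limit $(e_2,E_{12})$ lies in $\overline{\GL_4\mathfrak J}$, and its $\gl_4$-component is nilpotent of Jordan type $(2,1,1)$.
On the other hand, $\overline{\mathfrak J}\subseteq\k^2\times(\k\Id_2+\mathcal N(\gl_2))$. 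So every element of $\GL_4\overline{\mathfrak J}$ with nilpotent $\gl_4$-component has Jordan type $\lambda\cup\lambda\in\{(2,2),(1^4)\}$.
The faulty step in the paper is passing the closure through $\GL_{2n}\cdot$ in $\GL_{2n}\overline{\Ind_{\Vb_M}^{\Vb}M(\mathbf v,\mathbf z)}=\overline{\Ind_{\Vb_{\widetilde M}}^{\Vb_{2n}}\widetilde M(v,z)}$. Proposition~\ref{prop:double-induction} only identifies the dense orbits, not their closures.

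\textbf{What the correct closure statement buys.} The right statement is the one you isolate, $\overline{\GL_{2n}\mathfrak J}\cap\Vb=\overline{\mathfrak J}$. It gives order reflection. Combined with Theorem~\ref{prop:double-dim}(1), it also yields $\overline{J}=\varphi(G\times^{\G}\overline{\mathfrak J})$, which is all that Theorem~\ref{thm:final-poset-bijection}(1) actually needs.

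\textbf{Your reflection sketch.} If ``poset morphism'' is meant as an order embedding, as the later use suggests, your sketch is the right mechanism: fix the semisimple part $z$ via Remark~\ref{rem:normalisers}(2), then apply Lemma~\ref{lem:poset-nil}(2) factorwise to $\G^z$ together with Proposition~\ref{prop:double-induction}.
The one point you still need to write out is this: a point of $\overline{\Theta(\mathfrak J)}$ whose semisimple part is exactly $z$ has nilpotent part in the closure of the doubled induced orbit for $\GL_{2n}^z$. This follows from the description $\overline{\Theta(\mathfrak J)}=\GL_{2n}(\overline{\widetilde Mv},\mathfrak z(\mathfrak l)+\mathfrak u_{\widetilde Q})$, obtained from Proposition~\ref{prop:closure-general} in $\Vb_{2n}$ with $\widetilde M$ the doubled Levi and $\widetilde Q$ adapted. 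Use $\widetilde Q$-conjugation to move $(\overline{\widetilde Mv},z+\mathfrak u_{\widetilde Q})$ into $(\overline{\widetilde Mv},z+(\mathfrak u_{\widetilde Q}\cap\gl_{2n}^z))$. Then apply the description of induced closures in Proposition~\ref{prop:unique} to $\GL_{2n}^z$.
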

\begin{proof}
Let $(\mathbf v,\mathbf x)=(\mathbf v,\mathbf x_s+\mathbf x_n)\in \Vb$ and  $L=\G^{\mathbf x_s}$. Let $M$ be a  Levi subgroup of $L$ and $\mathbf w\in\k^n$ be such that $(\mathbf v,\mathbf x_s)\in{\rm Ind}_{\Vb_M}^{\Vb_L}(M(\mathbf w,0))$ for some $\mathbf w\in\k^n$. 

Let respectively, $v, x_s,\,x_n, w,z\in \Vb_{2n}$ be the images of $\mathbf v, \, \mathbf x_s,\,\mathbf x_n, \, \mathbf w,\mathbf z$ through the natural inclusion of $\Vb$ into $\Vb_{2n}$ and, similarly, let $\widetilde{L},\,\widetilde{M}$ be the images of $L$ and $M$ in $\GL_{2n}$.  Propositions \ref{prop:closure-general} and \ref{prop:double-induction} yield
\begin{align*}
\psi\left(\GL_{2n}\times^G\left(\varphi\times^\G\overline{\mathfrak{J}(\mathbf v,\mathbf x)}\right)\right)&=\psi\left(\GL_{2n}\times^G\varphi\left(1\times^\G\left(\bigcup_{z\in{\mf z}(\mf l)}\overline{{\rm Ind}_{\Vb_M}^{\Vb}(M(\mathbf v,\mathbf z))}\right)\right)\right)\\
&=\bigcup_{z\in{\mf z}({\rm Lie}(\widetilde{L}))}\overline{{\rm Ind}_{\Vb_{\widetilde{M}}}^{\Vb_{2n}}(\widetilde{M}(v,z))}=\overline{\psi\left(\GL_{2n}\times^G\left(1\times^\G\mathfrak{J}(\mathbf v,\mathbf x)\right)\right)},
\end{align*}
giving the claim. 
\end{proof}

\subsection{The combinatorics of induction: the general case}
Let $(n_1, \ldots, n_r)$ be a composition of $n$. For each $i$, let $\lambda^{(i)}$ be a partition of $n_{i}$. Then $(\lambda^{(i)})_{i=1}^r$ is a sequence of partitions.  
We produce a partition $\lambda$ of $n$, denoted $\lambda^{\sum}$, where $\lambda_j = \sum_{i=1}^{r} \lambda_{j}^{(i)}$. This is simply adding together the partitions $\lambda^{(i)}$. From the definition of $\lambda^{\sum}$, it is clear that this operation is associative, in the sense that $\lambda^{\sum}$ is independent of the order by which individual parts of the partitions $\lambda^{(i)}$ are added together. This is a manifestation of transitivity of induction, which we will use to prove the following:

\begin{theorem}
	Let $(n_1, \ldots n_r)$ be a composition of $n$ that defines a Levi subgroup $L$ of $\GL_{n}$. Let $\big((\mu^{(i)}; \nu^{(i)})\big)_{i=1}^{r}$ be a sequence of bipartitions, where $(\mu^{(i)}; \nu^{(i)})$ is a bipartition of $n_i$. Let $\prod_{i=1}^r\bO_{(\mu^{(i)}; \nu^{(i)})}\subset \mathcal{N}(\Vb_L)$ be the corresponding nilpotent orbit for $L$. Then the bipartition corresponding to ${\rm Ind}_{\Vb_L}^{\Vb}(\prod_{i=1}^r\bO_{(\mu^{(i)}; \nu^{(i)})})$ is $(\mu^{\sum};\nu^{\sum})$. 
\end{theorem}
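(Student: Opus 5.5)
The plan is to reduce to the case already computed in Proposition~\ref{prop:combinatorics-inducedfromzero}, where one induces orbits of the form $L'(v',0)$, and to combine this with transitivity of induction (Proposition~\ref{prop:induction-transitive}).

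\emph{Step 1: each factor is induced from a vector.} By Corollary~\ref{cor:all-induced}(1), applied to the factor $\GL_{n_i}$, each orbit $\bO_{(\mu^{(i)};\nu^{(i)})}\subseteq\cN(\Vb_{n_i})$ is induced from a Levi $M_i\subseteq\GL_{n_i}$ and a vector $v^i$. The proof of Corollary~\ref{cor:all-induced}(1) makes this explicit. The Levi $M_i$ is the product of $\GL_{c}$ over the column lengths $c$ of $\mu^{(i)}$ and of $\nu^{(i)}$. The vector $v^i$ is nonzero exactly on the blocks coming from the columns of $\mu^{(i)}$. Set $M=\prod_i M_i\subseteq L$. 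Induction from $\Vb_M$ to $\Vb_L$ is done factor by factor, because the enhanced module of $L$ is the external direct sum of the enhanced modules of the factors. This holds at the level of Jordan classes and closures, since everything in Corollary~\ref{cor:induced-orbit} splits as a product. Hence $\prod_i\bO_{(\mu^{(i)};\nu^{(i)})}=\Ind_{\Vb_M}^{\Vb_L}M(v,0)$ with $v=\sum_i v^i$.

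\emph{Step 2: transitivity.} By Proposition~\ref{prop:induction-transitive},
\[
\Ind_{\Vb_L}^{\Vb}\Bigl(\prod_i\bO_{(\mu^{(i)};\nu^{(i)})}\Bigr)=\Ind_{\Vb_M}^{\Vb}M(v,0).
\]
One point needs checking here. Transitivity was stated for induction of nilpotent orbits as defined through Corollary~\ref{cor:induced-orbit}. For an orbit $M(v,0)$ that definition agrees with the bundle construction of Lemma~\ref{lem:induction0-step1}, by Proposition~\ref{prop:unique}.

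\emph{Step 3: combinatorics.} Now apply Proposition~\ref{prop:combinatorics-inducedfromzero} to the composition of $n$ given by all the block sizes of $M$. The blocks on which $v$ is nonzero are exactly the columns of all the $\mu^{(i)}$; the blocks on which $v$ vanishes are the columns of all the $\nu^{(i)}$. The induced bipartition $(\mu;\nu)$ therefore satisfies
\[
\mu_j=\#\{\text{columns of length}\ge j\text{ among all }\mu^{(i)}\}=\sum_i\#\{\text{columns of }\mu^{(i)}\text{ of length}\ge j\}=\sum_i\mu^{(i)}_j,
\]
since the number of columns of a partition of length $\ge j$ equals its $j$-th part. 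This gives $\mu=\mu^{\sum}$, and likewise $\nu=\nu^{\sum}$. Proposition~\ref{prop:combinatorics-inducedfromzero} is stated after reordering the blocks so that those supporting $v$ come first. The reordering is harmless: the induced orbit is independent of the adapted parabolic (Proposition~\ref{prop:unique}), and $\G$-conjugating $M$ does not change the result.

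I expect the main obstacle to be Step~1. One has to justify carefully that induction for a product Levi $L=\prod\GL_{n_i}$ into $\Vb_L$ is the product of the inductions within each factor. The cleanest route is to compute directly with Proposition~\ref{prop:unique}: take $P_L=\prod_i P_i$ with each $P_i$ adapted in its factor, so that
\[
L\bigl(\overline{Mv},\mathfrak u_{P_L}\bigr)=\prod_i\GL_{n_i}\bigl(\overline{M_iv^i},\mathfrak u_{P_i}\bigr).
\]
The dense orbit of this product is the product of the dense orbits, which are $\bO_{(\mu^{(i)};\nu^{(i)})}$ by Proposition~\ref{prop:combinatorics-inducedfromzero} applied inside each $\GL_{n_i}$.
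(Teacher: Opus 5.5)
Your proposal is correct and follows essentially the same route as the paper: write each factor orbit as induced from rigid orbits of type $(1^{c};\emptyset)$ or $(\emptyset;1^{c})$ via Corollary~\ref{cor:all-induced}, apply transitivity (Proposition~\ref{prop:induction-transitive}), and read off the column sums from Proposition~\ref{prop:combinatorics-inducedfromzero}. You collect everything into a single Levi $M=\prod_i M_i$ with one vector $v$, and you check explicitly that induction into $\Vb_L$ splits factor by factor; this is a cleaner version of the paper's loosely written ``product of inductions'' step.
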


\begin{proof}
The $L$-orbit $\prod_{i=1}^r\bO_{(\mu^{(i)}; \nu^{(i)})}$ is a product of orbits where each factor $\bO_{(\mu^{(i)}; \nu^{(i)})}$ lies in the enhanced nilpotent cone $\k^{n_{i}} \times \mathcal{N}(\mathfrak{gl}_{n_{i}})$ for $\GL_{n_{i}}$, a factor in the Levi $L$. By Corollary~\ref{cor:all-induced}, the only rigid orbits in $\Vb$ correspond to partitions of the form $(1^n;\emptyset)$ and $(\emptyset, 1^n)$, and so it follows that we may regard each orbit $\bO_{(\mu^{(i)}; \nu^{(i)})}$ as induced from products of orbits whose corresponding bipartition is of that form. Now for each $i$, every partition $\mu^{(i)}$ and $\nu^{(i)}$ is the sum of partitions of the form $(1^{m(i,j)})$ and $(1^{q(i,l)})$ for $1 \leq j \leq \mu^{(i)}_{1}$ and $1 \leq l \leq \nu_{1}^{(i)}$, and where $m(i,j)$ is the $j$th part of the transpose partition $(\mu^{(i)})^{tr}$ and $q(i,l)$ is the $l$-th part of the transpose partition $(\nu^{(i)})^{tr}$ (this is saying nothing more than every partition can be thought of as the sum of columns in its corresponding Young diagram). Therefore each orbit $\bO_{(\mu^{(i)}; \nu^{(i)})}$ can be expressed as
$$
\bO_{(\mu^{(i)}; \nu^{(i)})} = \Ind_{\Vb_{L_{i}}}^{\Vb_{L}}\left( \prod_{j=1}^{\mu_{1}^{(i)}} \bO_{(1^{(m(i,j))}; \emptyset)} \times  \prod_{l=1}^{\nu_{1}^{(i)}} \bO_{(\emptyset; 1^{q(i,l)})}  \right)
$$
where $L_{i}$ is a Levi subgroup of $L$ in which the orbits in brackets are rigid. Therefore, by transitivity of induction we have
\begin{eqnarray*}
{\rm Ind}_{\Vb_L}^{\Vb}\prod_{i=1}^r\bO_{(\mu^{(i)}; \nu^{(i)})} &=& {\rm Ind}_{\Vb_L}^{\Vb}\ \left( \prod_{i=1}^{r}  \Ind_{\Vb_{L_{i}}}^{\Vb_{L}}\left( \prod_{j=1}^{\mu_{1}^{(i)}} \bO_{(1^{m(i,j)}; \emptyset)} \times  \prod_{l=1}^{\nu_{1}^{(i)}} \bO_{(\emptyset; 1^{q(i,l)})}  \right)\right) \\
&=& \prod_{i=1}^{r} {\rm Ind}_{\Vb_L}^{\Vb}  \Ind_{\Vb_{L_{i}}}^{\Vb_{L}}\left( \prod_{j=1}^{\mu_{1}^{(i)}} \bO_{(1^{m(i,j)}; \emptyset)} \times  \prod_{l=1}^{\nu_{1}^{(i)}} \bO_{(\emptyset; 1^{q(i,l)})}  \right) \\
&=& \prod_{i=1}^{r}   \Ind_{\Vb_{L_{i}}}^{\Vb}\left( \prod_{j=1}^{\mu_{1}^{(i)}} \bO_{(1^{m(i,j)}; \emptyset)} \times  \prod_{l=1}^{\nu_{1}^{(i)}} \bO_{(\emptyset; 1^{q(i,l)})}  \right).
\end{eqnarray*}
By Proposition~\ref{prop:combinatorics-inducedfromzero}, we know that the bipartition corresponding to this induced orbit is the component-wise sum of bipartitions $(1^{(m(i,j))}; \emptyset)$ and $(\emptyset; 1^{(q(i,l))})$, where $1 \leq i \leq r$, $1 \leq j \leq \mu^{(i)}_{1}$ and $1 \leq l \leq \nu_{1}^{(i)}$. Moreover, we also know that this process of summing is associative, and so we conclude that this bipartition is of the form $(\mu^{\sum};\nu^{\sum})$. 
\end{proof}

\section{Induction of orbits and closures of classes: exotic case}
\label{sec:inductionandclosures-exotic}

In this section we apply the correspondences from the previous ones and the results on enhanced Jordan classes to infer properties of exotic Jordan classes.

\begin{theorem}\label{thm:final-poset-bijection}
\begin{enumerate}
 \item The bijection established in Theorem \ref{thm:main-bijection} is an isomorphism of posets extending the poset isomorphism from Lemma \ref{lem:poset-nil}.
\item The closure and the regular locus of the closure of an exotic Jordan class is a union of exotic Jordan classes.
\item Let $J$ be an exotic class corresponding to a $G$-orbit of a pair $(M, \O)$ in accordance with Proposition~\ref{prop:basic-properties}. Then $\overline{J} \cap \cN(\V)$ is the closure of a single orbit, which we refer to as the exotic nilpotent $G$-orbit induced from $(M, \O)$, and we denote it by $\Ind_{\V_M}^{\V}(\O)$. For $\mathfrak J$ the enhanced Jordan class satisfying $J=\varphi(G\times^{\G}\mathfrak J)$, there holds 
\begin{equation*}\overline{\Ind_{\V_M}^{\V}(\O)}=\varphi(G\times^{\G}(\overline{\mathfrak J}\cap\mathcal N(\Vb)).\end{equation*} In particular, $\dim \overline{J}\cap\mathcal N(\V)=2\dim \overline{\mathfrak J}\cap\mathcal N(\Vb)$.
\end{enumerate}
\end{theorem}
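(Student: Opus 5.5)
The plan is to transport everything from the enhanced side through $\varphi$. The key identity is that for any enhanced Jordan class $\mathfrak J$ with $J=\varphi(G\times^{\G}\mathfrak J)$,
\begin{equation*}
\overline{J}=\varphi\bigl(G\times^{\G}\overline{\mathfrak J}\bigr).
\end{equation*}
The inclusion $\supseteq$ follows from continuity of $\varphi$. For $\subseteq$, I would use that $\overline{\mathfrak J}=\G(\overline{Mv},\mf z(\mf l)+\mf u_Q)$ is the image of the proper map $\G\times^Q(\overline{Mv},\mf z(\mf l)+\mf u_Q)\to\Vb$ (Proposition~\ref{prop:closure-general}). Hence $G\overline{\mathfrak J}$ is the image of $G\times^{\G}\G\times^Q(\cdots)=G\times^Q(\cdots)$. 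For this image to be closed we need a properness statement for $G\times^Q Y\to\V$. Since $Q$ is not parabolic in $G$, I would instead argue closedness of $G\overline{\mathfrak J}$ via the dimension count below together with Theorem~\ref{prop:double-dim}(1)--(2). Every $G$-orbit in $\overline J$ meets $\Vb$, and $\overline{J}\cap\Vb$ is a $\G$-stable closed set containing $\mathfrak J$. Conversely, if $\varphi(w,y)\in\overline{J}$ with $(w,y)\in\Vb$, then by the order-reflecting part of Theorem~\ref{prop:double-dim}(2) applied orbitwise we get $(w,y)\in\overline{\G(v',x')}$ for some $(v',x')\in\mathfrak J$. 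The subtle point is that the closure of a class is not the closure of one orbit, so the orbitwise statement does not apply directly. I expect this closure comparison to be the main obstacle.

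To handle it I would pass to $\Vb_{2n}$ using $\psi$. By Proposition~\ref{prop:closure-double}, $\psi(\GL_{2n}\times^G\varphi(1\times^\G\overline{\mathfrak J}))=\overline{\mathfrak J_{2n}}$, the closure of the corresponding enhanced class in $\Vb_{2n}$. Since $\psi(\V)$ is closed and $\psi|_{1*\V}$ is an isomorphism onto it, we have $\overline J\subseteq\psi(\V)\cap\overline{\mathfrak J_{2n}}$. Every point of the latter is $\GL_{2n}$-conjugate to a point of $\psi\varphi(\overline{\mathfrak J})$. Then the injectivity of $\psi$ on orbits (Theorem~\ref{prop:double-dim}(6)) shows that this point is $G$-conjugate to an element of $\varphi(\overline{\mathfrak J})$. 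This gives the identity $\overline{J}=G\,\varphi(\overline{\mathfrak J})$.

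With this identity, (1) and (2) follow quickly. For (1), suppose $J'\subseteq\overline J$. Then $\mathfrak J'=J'\cap\Vb\subseteq\overline J\cap\Vb=\G$-saturation of $\overline{\mathfrak J}$ by the orbit bijection, which is $\overline{\mathfrak J}$. The converse direction uses continuity. On nilpotent orbits this recovers Lemma~\ref{lem:poset-nil}. For (2), Proposition~\ref{prop:closure-enhanced} writes $\overline{\mathfrak J}$ as a union of enhanced classes, and applying $G\varphi$ gives $\overline J$ as a union of exotic classes by Theorem~\ref{thm:main-bijection}(1). For the regular locus, I would use \eqref{eq:regularaftersaturation1}: $\overline J^{\Greg}=\varphi(G\times^\G\overline{\mathfrak J}^{\Gbreg})$, and then argue in the same way.

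For (3), intersect the identity with $\cN(\V)$. Since $\varphi^{-1}$ of nilpotent elements consists of nilpotent elements (Lemma~\ref{lem:poset-nil}), we get $\overline J\cap\cN(\V)=G\varphi(\overline{\mathfrak J}\cap\cN(\Vb))$. By Corollary~\ref{cor:induced-orbit} the set $\overline{\mathfrak J}\cap\cN(\Vb)=\overline{\bO}$, and $G\varphi(\overline{\bO})$ is the closure of the single orbit $G\varphi(\bO)$, by the poset isomorphism of Lemma~\ref{lem:poset-nil}. The dimension doubling follows from Theorem~\ref{prop:double-dim}(3) applied to the dense orbit.
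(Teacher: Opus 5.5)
Your proposal is correct and follows the paper's strategy. The key identity $\overline{J}=\varphi(G\times^{\G}\overline{\mathfrak J})$ is obtained as in the paper: embed into $\Vb_{2n}$ via $\psi$, use Proposition~\ref{prop:closure-double} to get $\overline{\mathfrak J_{2n}}=\GL_{2n}\psi\varphi(\overline{\mathfrak J})$, and then apply injectivity of $\psi$ on orbits (Theorem~\ref{prop:double-dim}(6)); (1) and (2) then follow as in the paper from Theorem~\ref{thm:main-bijection}, Proposition~\ref{prop:closure-enhanced} and \eqref{eq:regularaftersaturation1}. For (3) you streamline slightly by intersecting this identity directly with $\cN(\V)$, which is valid, whereas the paper passes to $\Vb_{2n}$ again and compares dimensions via Corollary~\ref{cor:induced-orbit}; note that the fact that $\varphi(v,x)$ is nilpotent iff $x$ is comes from the matrix description of $\cN(\V)$, not from Lemma~\ref{lem:poset-nil}.
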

\begin{proof}
(1) Let $\mathfrak J$ be an enhanced Jordan class in $\Vb$, and let $J:=\varphi(G\times^\G\mathfrak J)$.  We show that $\overline{J}=\varphi(G\times^\G\overline{\mathfrak J})$.
The restriction of $\varphi$ to  $1\times^{\G}\Vb $ is the natural inclusion  and $\varphi$ is $G$-equivariant, so  $\varphi(G\times^\G\overline{\mathfrak{J}})=G(\varphi(1\times^{\G}\overline{\mathfrak J}))\subseteq \overline{GJ}=\overline{J}$. 
We now apply $\psi(\GL_{2n}\times^G-)$ on both sides and invoke Proposition \ref{prop:closure-double}, obtaining 
\begin{align*}
\overline{\psi\left(\GL_{2n}\times^G\varphi\left(G\times^\G\mathfrak J\right)\right)}&=\psi\left(\GL_{2n}\times^G\varphi\left(G\times^\G\overline{\mathfrak J}\right)\right)\\
&\subseteq\psi\left(\GL_{2n}\times^G \overline{J}\right)\\
&=\GL_{2n} \psi\left(1\times^G\overline{J}\right)\\
&\subseteq \overline{\GL_{2n} \psi\left(1\times^G J\right)}\\
&= \overline{\psi\left(\GL_{2n}\times^G\varphi\left(1\times^\G \mathfrak J\right)\right)}
\end{align*}
where the last inclusion follows because the restriction of $\psi$ to $1\times^{\G}\V$ is the natural inclusion and $\psi$ is $\GL_{2n}$-equivariant.  Therefore all inclusions are equalities, whence
\begin{equation*}
\psi\left(\GL_{2n}\times^G\varphi\left(1\times^\G\overline{\mathfrak J}\right)\right)=\GL_{2n} \psi\left(1\times^G\overline{J}\right).
\end{equation*}
 Let $\bO$ be a $G$-orbit in $\overline{J}$. The $\GL_{2n}$-orbit $\psi(\GL_{2n}\times^G\bO)$ is contained in $\psi\left(\GL_{2n}\times^G\varphi\left(1\times^\G\overline{\mathfrak J}\right)\right)$. Its intersection with $\psi(1\times^G\V)$ is a $G$-orbit in $\varphi\left(1\times^\G\overline{\mathfrak J}\right)$ which must coincide with $\bO$ by Proposition \ref{prop:double-dim}.

(2)  Let $J$ be an exotic Jordan class. By Theorem \ref{thm:main-bijection} (1) and (2) there exists an enhanced Jordan class $\mathfrak{J}$ in $\Vb$ such that $J=\varphi(G\times^{\G}\mathfrak J)$ and $\overline{J}=\varphi(G\times^{\G}\overline{\mathfrak J})$. It follows from Proposition \ref{prop:closure-enhanced} that there are enhanced Jordan classes $\mathfrak{J}_1,\,\ldots,\,\mathfrak{J}_l$ and $\mathfrak{J}_{l+1},\,\ldots,\,\mathfrak{J}_{m}$ with $l\leq m$  in $\Vb$ such that
\begin{equation*}
\overline{\mathfrak J}=\bigcup_{i=1}^{m}{\mathfrak J}_i,\quad \overline{\mathfrak J}^{\Gbreg}=\bigcup_{i=1}^{l}{\mathfrak J}_i.
\end{equation*}
Therefore, using Proposition \ref{prop:double-dim}
\begin{align*}
\overline{J}&=\varphi\left(G\times^{\G}\bigcup_{i=1}^{m}{\mathfrak J}_i\right)=\bigcup_{i=1}^{m}\varphi\left(G\times^{\G}{\mathfrak J}_i\right),\\
\overline{J}^{G\operatorname{-reg}}&=\varphi\left(G\times^{\G}{\mathfrak J}^{\G\operatorname{-reg}}\right) = \varphi\left(G\times^{\G}\bigcup_{i=1}^{l}{\mathfrak J}_i\right)=\bigcup_{i=1}^{l}\varphi\left(G\times^{\G}{\mathfrak J}_i\right)
\end{align*}
where all $\varphi\left(G\times^{\G}{\mathfrak J}_i\right)$ are exotic Jordan classes by Theorem \ref{thm:main-bijection} (1). 

(3) Let $d\in \mathbb N$ be such that $J\subset \V_{(d)}$. 
On the one hand we have 
\begin{align*}
\varphi(G\times^{\G}(\overline{\mathfrak J}\cap\mathcal N(\Vb)))&\subseteq \varphi\left((G\times^{\G}\overline{\mathfrak J})\cap (G\times^{\G}\mathcal N(\Vb))\right)\\
&\subseteq \varphi \left(G\times^{\G}\overline{\mathfrak J}\right)\cap \varphi(G\times^{\G}\mathcal N(\Vb))\subseteq \overline{J}\cap\mathcal N(\V).
\end{align*}
By Corollary \ref{cor:induced-orbit} the intersection $\overline{\mathfrak J}\cap\mathcal N(\Vb)$ is the closure of a single nilpotent $\G$-orbit $\bO$ of dimension $(1/2)d$. 
Hence, the nilpotent  $G$-orbit $\O':=\varphi(G\times^\G {\bO})$  satisfies $\overline{\O'}\subseteq \left(\overline{J}\cap\mathcal N(\V)\right)$. 

Let now $\widetilde{\mathfrak J}:=\psi(\GL_{2n}\times^G J)$.
Arguing as above we see that 
\begin{equation}\label{eq:chain}
\psi(\GL_{2n}\times^G\O')\subseteq \psi(\GL_{2n}\times^G( \overline{J}\cap\mathcal N(\V)) )\subseteq \overline{\widetilde{\mathfrak J}}\cap \mathcal N(\Vb_{2n}).
\end{equation}
 Invoking Corollary \ref{cor:induced-orbit} once more, we see that $ \overline{\widetilde{\mathfrak J}}\cap \mathcal N(\Vb_{2n})$ is the closure of a single nilpotent $\GL_{2n}$-orbit $\widetilde{\bO}$ with  
$\dim \widetilde{\bO}= 2d$.
 Since $\dim \psi(\GL_{2n}\times^G\O')=2d$, we have $\psi(\GL_{2n}\times^G\overline{\O'})=\overline{\widetilde{\bO}}$. 
Passing to the regular locus in \eqref{eq:chain} we obtain 
\begin{equation*}\psi(\GL_{2n}\times^G\O')= \psi(\GL_{2n}\times^G( (\overline{J}\cap\mathcal N(\V))^{G\operatorname{-reg}}))= {\widetilde{\bO}}.\end{equation*}
By Lemma \ref{lem:poset-nil} any exotic nilpotent orbit in $\overline{J}\cap\mathcal N(\V)$ is contained in the closure of $\O'$.
\end{proof}

\begin{rem}
\label{ss:properties-exotic}
{\rm Thanks to Theorem~\ref{thm:final-poset-bijection} several key properties of induced exotic orbits and exotic Jordan classes now follow from the enhanced setting. In particular:
\begin{itemize}
\item The closure of an exotic Jordan class can be described precisely using induction (Proposition~\ref{prop:closure-general}).
\item The closure and regular closure of an exotic Jordan  class are each a union of exotic Jordan classes. In particular, $\V$ is stratified by its exotic Jordan classes (Proposition~\ref{prop:closure-enhanced}).
\item Induction of exotic nilpotent orbits is transitive and preserves codimension (Corollary~\ref{cor:induced-orbit}, Proposition~\ref{prop:induction-transitive})
\end{itemize}}
\end{rem}

\begin{rem}
\label{rem:classification-sheets}
{\rm It follows from \eqref{eq:reg-closure-Jc-general} and Corollary~\ref{cor:induced-orbit} that if $\J_i = \J(L_i, \bO_i) \subseteq \Vb_{(d)}$, for $i = 1,2$ and $d\in \mathbb N$ are two enhanced classes  then $\J_1 \subseteq \overline{\J_2}$ if and only if, upto conjugacy, $L_2 \subseteq L_1$ and $\Ind_{\Vb_{L_2}}^{\Vb_{L_1}}(\bO_2) = \bO_1$. It follows immediately that a Jordan class $\J = \J(L, \bO)$ is dense in a sheet if and only if $\bO \subseteq \Vb_L$ is a rigid orbit.

By Proposition~\ref{prop:basic-properties}(3), Jordan classes are classified by  a partition $\lambda$ and a choice of bipartition $(\mu_i; \nu_i)$ of $\lambda_i$ for each $i$ (upto reordering by $\mathbb{S}_\lambda$). Combining with Corollary~\ref{cor:all-induced}, we see that when $\J$ is dense in a sheet, the bipartitions are each of the form $((1^{\lambda_i}); \emptyset)$ or $(\emptyset ; (1^{\lambda_i}))$.

Thanks to Theorem~\ref{thm:final-poset-bijection} anaologous remarks hold for exotic Jordan classes. This concludes the proof of Theorem~D of the introduction.}
\end{rem}

\section*{Acknowledgements}
G.C. is grateful to Paul Levy for a  useful discussion concerning symplectic singularities and nilpotent orbits that lead to Remark \ref{rem_exoticnotccs}.

Project partially funded by the European Union -- Next Generation EU under the National
Recovery and Resilience Plan (NRRP), Mission 4 Component 2 Investment 1.1 -
Call PRIN 2022 No. 104 of February 2, 2022 of Italian Ministry of
University and Research; Project 2022S8SSW2 (subject area: PE - Physical
Sciences and Engineering) ``Algebraic and geometric aspects of Lie theory''.
F.A. and G.C.  and F.E. and are members of the INdAM group GNSAGA.
F.A.'s research was conducted at FSU Jena. L. T. is funded by a UKRI Future Leaders Fellowship MR/Z000394/1, and during the initial phase of this research he was supported by grant numbers MR/S032657/1,
MR/S032657/2, MR/S032657/3.

\setcounter{tocdepth}{1}
\bibliographystyle{plain}

\begin{thebibliography}{10}
\bibitem{ah} P. N. Achar and A. Henderson, Orbit closures in the enhanced nilpotent cone, {\it Adv. Math.} {\bf 219} (2008), no. 1, 27–62. 
\bibitem{ahj} P.~Achar, A.~Henderson, B.~Jones,
Normality of orbit closures in the enhanced nilpotent cone,
{\it Nagoya Math. J.} {\bf 203} (2011), 1--45.
\bibitem{Antor_g2} J.~Antor, Geometric realizations of affine Hecke algebras with unequal parameters,     (2025), https://arxiv.org/abs/2505.02942.
\bibitem{Antor_f4} J.~Antor, An exotic Springer correspondence for $F_4$, (2025), https://arxiv.org/abs/2508.14199.
\bibitem{BR} P. Bardsley, R.W. Richardson, \'Etale slices for algebraic transformation groups in characteristic $p$,  {\it Proc. Lond. Math. Soc.} (3) {\bf 51} (1985), no. 2, 295--317.
\bibitem{Beau}A. Beauville, Symplectic singularities. Invent. math. {\bf 139} (2000), 541--549. 
\bibitem{BB}G. Bellamy, M. Boos, The (cyclic) enhanced nilpotent cone via quiver representations, {\it manuscripta math.} {\bf 161}, 333--362 (2020). 
\bibitem{bernstein} J. Bernstein, $P$-invariant distributions on $\mathrm{GL}(N)$ and the classification of unitary representations of $\mathrm{GL}(N)$ (non-archimedean case), In: Lie Group representations II, Proceedings of the Special Year held at the University of Maryland, College Park 1982--1983, LNM 1041, R. Herb, S. Kudla, R. Lipsman, J. Rosenberg (Eds.) 50--102, Springer-Verlag 1984. 
\bibitem{borel}A. Borel, Linear Algebraic Groups, GTM vol. 126,  Springer-Verlag. (1991)  
\bibitem{bo} W. Borho, Zum Induzieren unipotenter Klassen, {\it Math. Sem. Univ. Hamburg} {\bf 50}, 1--4 (1981).
\bibitem{bo-inv}W. Borho,  \"{U}ber Schichten halbeinfacher Lie-Algebren, {\it Invent. math.} {\bf 65}, 283--317 (1981)
\bibitem{BorhoKraft} W. Borho and H. Kraft,  \"Uber Bahnen und deren Deformationen bei linearen Aktionen reduktiver Gruppen,  \"{U}ber Bahnen und deren Deformationen bei linearen Aktionen  reduktiver Gruppen, {\it Comment. Math. Helv.}, (1979) {\bf 54}, 61--104.
\bibitem{broer} A. Broer, Decomposition varieties in semisimple Lie algebras, {\it Can. J. Math.} {\bf 50} (5), 1998, pp. 929--971.
\bibitem{broer2} A. Broer, Lectures on decomposition classes,  In: Broer, A., Daigneault, A., Sabidussi, G. (eds) Representation Theories and Algebraic Geometry. Nato ASI Series, vol 514. Springer, Dordrecht.
\bibitem{Bul} M. Bulois, Sheets of symmetric Lie algebras and Slodowy slices,
{\it J. Lie Theory} {\bf 21} (2011), no. 1, 1--54.
\bibitem{BH} M. Bulois, P. Hivert, Sheets in symmetric Lie algebras and slice induction, {\it Transform. Groups} {\bf 21} (2016), no. 2, 355--375.
\bibitem{CG} N. Chriss, V. Ginzburg, Representation theory and complex geometry,
Mod. Birkh\"auser Class. Birkh\"auser Boston, Ltd., Boston, MA, 2010, x+495 pp.
\bibitem{DK} J. Dadok and V. G. Kac, Polar representations, {\it J. Algebra},  {\bf 92} (2), 1985, 504--524.
\bibitem{GG} W. L. Gan and V. Ginzburg, Almost-Commuting Variety, $\mathcal{D}$-Modules, and Cherednik Algebras, {\it IMRP Int. Math. Res. Pap.} Volume {\bf 2006}, Article ID 26439, Pages 1--54
\bibitem{GV} V. Gatti, E. Viniberghi, Spinors of $13$-dimensional space, {\it Adv. Math.} {\bf 30} (2), 1978, 137--155.
\bibitem{hartshorne} R. Hartshorne, Algebraic Geometry, GTM 32, Springer-Verlag, 1977.
\bibitem{JaNO}{J.C.~Jantzen}, Nilpotent orbits in representation theory, in: B.\,Orsted (ed.), ``Representation and Lie theory'', Progr. in
Math., {\bf 228}, 1--211, Birkh\"auser, Boston 2004.
\bibitem{Kac}V. G. Kac. Infinite root systems, representations of graphs and invariant theory. II. {\it J. Algebra},
{\bf 78} (1), 141--162, 1982.
\bibitem{K1} S. Kato,  An exotic Deligne-Langlands correspondence for symplectic groups. {\it Duke Math. J.} {\bf 148} (2), 305--371 (2009).
\bibitem{K2} \bysame,
Deformations of nilpotent cones and Springer correspondences. {\it Amer. J. Math.} {\bf 133} (2011), no. 2, 519--553.
\bibitem{KR71}{B. Kostant, S. Rallis}, Orbits and representations associated with symmetric spaces, {\it Amer. J. Math.} {\bf 93} (1971), 753--809.
\bibitem{Le09}{P. Levy}, Vinberg's  $\theta$-groups in positive characteristic and Kostant-Weierstrass slices, {\it Transform. Groups} {\bf 14} (2009), no. 2, 417--461.
\bibitem{LS} G. Lusztig, N. Spaltenstein, Induced nilpotent classes, {\it J. London Math. Soc.} {\bf 19} (1979), 41?52.
\bibitem{Ma} C. Mautner, Affine pavings and the enhanced nilpotent cone. {\it Proc. Amer. Math. Soc.} {\bf 145}, no. 4 (2017): 139398. 
\bibitem{milne} J.S. Milne,  Lectures on \'etale cohomology,  v 2.21, 2013, https://www.jmilne.org/math/CourseNotes/LEC.pdf
\bibitem{Na}{Y. Namikawa},On the structure of homogeneous symplectic varieties of complete intersection, Invent. Math. {\bf 193} (2013), no.~1, 159--185.
\bibitem{Oh86}{T. Ohta}, The singularities of the closures of nilpotent orbits in certain symmetric pairs, Tohoku
Mathematical Journal, Second Series {\bf 38} (3), 441--468 (1986).
\bibitem{PV94}{V. L. Popov, E. B. Vinberg}, Algebraic geometry. IV
Encyclopaedia Math. Sci., 55
Springer-Verlag, Berlin, 1994, vi+284 pp.
\bibitem{PS}{A. Premet, D. Stewart}, Rigid orbits and sheets in reductive Lie algebras over fields of prime characteristic,
{\it J. Inst. Math. Jussieu} {\bf 17} (2018), no. 3, 583--613.
\bibitem{Spalt82}{N.~Spaltenstein}, Nilpotent classes and sheets of Lie algebras in bad characteristic, Math. Z. {\bf 181} (1982), no.~1, 31--48;
\bibitem{Spalt84}{N.~Spaltenstein}, Nilpotent classes in Lie algebras of type $F\sb{4}$\ over fields of characteristic $2$, J. Fac. Sci. Univ. Tokyo Sect. IA Math. {\bf 30} (1984), no.~3, 517--524;
\bibitem{springer}{T.A. Springer}, {The exotic nilcone of a symplectic group,}  J. Algebra {\bf 321} (11) (2009), 3550--3562. 
\bibitem{SF80} H. Strade, R. Farnsteiner,
Modular Lie algebras and their representations,
Monogr. Textbooks Pure Appl. Math., 116 Marcel Dekker, Inc., New York, 1988, x+301 pp.
\bibitem{sun}M. Sun, Point stabilisers for the enhanced and exotic nilpotent cones. {\it J. Group Theory}
14(6), 825--839 (2011).
\bibitem{mirabolic} R. Travkin, Mirabolic Robinson-Schensted-Knuth correspondence, {\it Selecta Math. (N.S.)} {\bf 14} (2009), no. 3-4, 727-758.
\end{thebibliography}

\end{document}